\documentclass{amsart}
\usepackage{amsfonts}

\setlength{\textheight}{43pc} \setlength{\textwidth}{28pc}

\usepackage{amssymb}
\usepackage{url}
\usepackage{amscd}
\usepackage{texdraw}


\theoremstyle{plain}

\newtheorem*{Shi}{Theorem}
\newtheorem*{rw}{Relative Schwarz Lemma}
\newtheorem*{Herm}{Theorem A}
\newtheorem*{ZB}{Theorem B}

\newtheorem*{M}{Main Theorem}

\newtheorem{corollary}{Corollary}[section]
\newtheorem{lemma}{Lemma}[section]
\newtheorem{remark}{Remark}
\newtheorem{proposition}{Proposition}[section]
\newtheorem{definition}{Definition}[section]
\input txdtools

\begin{document}
\title[Bounded type Siegel disks of rational maps are quasi-disks]
{All Bounded type Siegel disks of rational maps are quasi-disks}
\author{Gaofei Zhang}

\address{Department of  Mathematics \\ Nanjing University \\
210093, P. R. China}
\email{zhanggf@hotmail.com}

\thanks{}

\subjclass[2000]{Primary 37F10, Secondary 37F20}

\maketitle

\begin{abstract}
We prove that every bounded type Siegel disk of a rational map must
be a quasi-disk with at least one critical point on its boundary.
This verifies Douady-Sullivan's conjecture in the case of bounded
type rotation numbers.
\end{abstract}

\section{Introduction}
A Siegel disk of a rational map $f$ is a maximal domain on which $f$
is holomorphically conjugate to an irrational rotation. It was
conjectured by Douady and Sullivan in 1980's that the boundary of
every Siegel disk for a rational map has to be a Jordan curve
\cite{D1}. This has remained an open problem, even for quadratic
polynomials. The main purpose of this paper is to verify this
conjecture under the condition that the rotation number of the
Siegel disk is of bounded type. Here we say an irrational number $0<
\theta < 1$ is of bounded type if $\sup\{a_{k}\} < \infty$ where
$\theta = [a_{1}, \cdots, a_{n}, \cdots,]$ is the continued fraction
of $\theta$. Before we state the main result of the paper, let us
give a brief account of the previous studies on this problem.

In 1986, Douady observed that
quasisymmetric linearization of  critical circle mappings
would imply that the boundary of the Siegel disk of a quadratic polynomial is a quasi-circle.
Using work of Swiatek, Herman then proved the required quasisymmetric linearization result for
analytic  circle mappings with bounded type rotation numbers. This implies that every bounded type Siegel disk of
a quadratic polynomial  must be a
quasi-disk whose boundary passes through the unique finite critical point
of the quadratic polynomial \cite{D2}.  In 1998, by considering a
surgery map defined on certain space of some degree-5 Blaschke
products, Zakeri extended Douady-Herman's result to bounded type Siegel
disks of  all cubic polynomials \cite{Z}. Shortly after that, in his
webpage Shishikura announced
\begin{Shi}[Shishikura]
All bounded type Siegel disks of  polynomial maps are quasi-disks
which have at least one critical point on their boundaries.
\end{Shi}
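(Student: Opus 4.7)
The plan is to prove Shishikura's statement by a quasiconformal surgery that matches $f$ to an explicit Blaschke product model along the Siegel boundary, using the Herman--Swi\k{a}tek linearization theorem as the crucial input. Let $f$ be a polynomial of degree $d$ with Siegel disk $\Delta$ of bounded type rotation number $\theta=[a_{1},a_{2},\dots]$. The first task is to show that $\partial\Delta$ carries at least one critical point of $f$. If it did not, then a small enlargement of $\partial\Delta$ would be forward invariant and disjoint from the critical set; its Poincar\'e metric would force strict hyperbolic contraction along the recurrent points of $\partial\Delta$, contradicting the indifferent character of the dynamics of $f$ on this invariant set.

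Next, I would construct an explicit model by a Blaschke product $B:\widehat{\mathbb{C}}\to\widehat{\mathbb{C}}$ preserving $\overline{\mathbb{D}}$ so that $B|_{\partial\mathbb{D}}$ is an analytic critical circle map of rotation number exactly $\theta$, whose critical combinatorics on $\partial\mathbb{D}$ match those of $f$ on $\partial\Delta$. Such a $B$ is produced by a continuity/intermediate-value argument in a finite-dimensional parameter space of Blaschke products, relying on the continuity of rotation number in parameters. Since $\theta$ is of bounded type, Herman--Swi\k{a}tek furnishes a quasisymmetric conjugacy $\partial\mathbb{D}\to\partial\mathbb{D}$ between $B|_{\partial\mathbb{D}}$ and the rigid rotation $R_{\theta}$; extend it by Beurling--Ahlfors to a quasiconformal $h:\overline{\mathbb{D}}\to\overline{\mathbb{D}}$.

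Now perform the surgery: define a quasiregular map $F$ on $\widehat{\mathbb{C}}$ by $F=B$ on $\widehat{\mathbb{C}}\setminus\mathbb{D}$ and $F=h^{-1}\circ R_{\theta}\circ h$ on $\mathbb{D}$. These pieces agree on $\partial\mathbb{D}$ by construction, and the Beltrami coefficient given by $h^{*}\sigma_{0}$ inside $\mathbb{D}$ and $\sigma_{0}$ outside has bounded dilatation and is $F$-invariant. Integrating via the Measurable Riemann Mapping Theorem produces a quasiconformal $\Psi:\widehat{\mathbb{C}}\to\widehat{\mathbb{C}}$ such that $g=\Psi\circ F\circ\Psi^{-1}$ is a rational map with a genuine Siegel disk $\Delta_{g}=\Psi(\mathbb{D})$ of rotation number $\theta$, bounded by the quasi-circle $\Psi(\partial\mathbb{D})$, and with a critical point on that boundary inherited from $B$.

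The final step, which I expect to be the main obstacle, is to identify $g$ with $f$ up to a quasiconformal conjugacy carrying $\partial\Delta_{g}$ onto $\partial\Delta$. In the quadratic case this is essentially automatic because the family is one-parameter. In higher degree, however, the polynomial is not determined by the Siegel rotation number alone, so the remaining ``free'' critical points must be controlled simultaneously; the natural route is a combinatorial/puzzle rigidity argument adapted to bounded-type Siegel disks, in the spirit of Yoccoz and Thurston, which is precisely what made the degree-three case delicate in Zakeri's work. Once such a conjugacy is in hand, $\partial\Delta$ is itself a quasi-circle and the critical point of $B$ on $\partial\mathbb{D}$ transfers to a critical point of $f$ on $\partial\Delta$. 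A secondary but nontrivial technical point is Step~2 itself, where one must arrange the \emph{simultaneous} prescription of rotation number and critical configuration in the Blaschke family, which is a genuine parameter-counting problem that must be resolved before the surgery can even be set up.
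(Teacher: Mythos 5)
Your proposal follows a ``model first, then rigidity'' plan that is genuinely different from Shishikura's approach as implemented in \S3 of this paper. Shishikura's idea, which the paper carries out in Lemmas~\ref{inv} and~\ref{con-ext}, is to work directly with the Siegel disk of $f$ from the inside: for each $0<r<1$, uniformize $\widehat{\Bbb C}\setminus\overline{D_r}$, interpolate quasiconformally across the remaining annulus, and obtain after a Douady--Ghys-type straightening a degree-$(2d-1)$ Blaschke product $B=B_r$ that \emph{already} encodes the dynamics of $f$ outside $D_r$. Herman's quasisymmetric bound (Theorem~A, or its extension Theorem~B) then gives a uniformly quasiconformal extension $H_B$ of the conjugacy on $\mathbb{T}$, and pulling the standard structure back through $\widehat B$ yields a $K(d,\theta)$-quasiconformal homeomorphism $\chi$ of the sphere with $\chi(\mathbb{T})=\Gamma_r$, uniformly in $r$. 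Letting $r\to1$ closes the argument. The point of this route is that no independent model of $f$ and no rigidity step is ever needed.

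Your route has a genuine gap at exactly the place you flag. Step~5 --- identifying your surgered rational map $g$ with $f$ by a quasiconformal conjugacy carrying $\partial\Delta_g$ to $\partial\Delta$ --- is the hard part, and ``combinatorial/puzzle rigidity adapted to bounded-type Siegel disks'' is not an available black box for arbitrary polynomial degree $d$. That is why Zakeri's cubic case was already delicate, and why Shishikura's strategy (avoiding any need to prove $f\simeq g$) is the one that works in general. As written, your argument reduces the theorem to an open rigidity statement rather than proving it. Step~2 has a related circularity: to ``match the critical combinatorics of $f$ on $\partial\Delta$'' in your Blaschke family, you would need to know $\partial\Delta$ is a Jordan curve on which critical points sit in a controlled pattern, which is essentially the conclusion.

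Your Step~1 is also flawed as an independent first step. You propose that if $\partial\Delta$ carried no critical point, a ``small enlargement'' of $\partial\Delta$ would be forward invariant and yield hyperbolic expansion, a contradiction. But a priori $\partial\Delta$ is just a compact $f$-invariant set, not a Jordan curve; a neighborhood of it is not forward invariant, and there is no straightforward Poincar\'e metric argument on an enlargement. The correct version of this statement (Herman's result, cited here as \cite{He}) requires first knowing that $f|\partial D$ is injective, i.e.\ that $\partial D$ is a Jordan curve. In the paper the logical order is the reverse of yours: Lemma~\ref{inv} first proves $\partial D$ is a quasi-circle, then derives injectivity of $f|\partial D$, and only then invokes Herman to conclude a critical point lies on $\partial D$. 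You should restructure your proof to match this order, or else supply a proof of the critical-point statement that does not presuppose topological regularity of $\partial\Delta$.
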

The main purpose of this paper is to generalize the above result to
bounded type Siegel disks of all rational maps.
\begin{M}
Let $d \ge 2$ be an integer  and $0< \theta < 1$ be an irrational
number of bounded type. Then there exists a constant $1< K(d,
\theta)< \infty$ depending only on $d$ and $\theta$ such that for
any  rational map $f$ of degree $d$, if $f$ has a fixed Siegel disk
with rotation number $\theta$, then   the boundary of the Siegel
disk is a $K(d, \theta)$-quasi-circle which passes through at least
one critical point of $f$.
\end{M}
There are two main ingredients in the proof of the Main Theorem. The
first one is due to Shishikura by which he proved that bounded type
Siegel disks of polynomial maps are all quasi-disks.  The idea of
Shishikura is to prove that any invariant curve inside a bounded
type Siegel disk of a polynomial map is uniformly quasiconformal.
The result then follows by letting the invariant curve approach the
boundary of the Siegel disk.  A detailed description of this
strategy will be given in $\S3$ of this paper.

The second one is an extension of Herman's uniform quasisymmetric
bound to all analytic circle mappings induced by  $\emph{centered}$ Blaschke products (for the definition of $\emph{centered}$ Blaschke products, see $\S2$).
 As indicated by Shishikura, the key tool used in his proof is a
uniform quasisymmetric bound of the linearization maps for a compact
family of analytic circle mappings, which was due to Herman (see
Theorem A of $\S2$). The main obstruction in generalizing
Shishikura's result to all rational maps is that the family of
Blaschke products involved in constructing Siegel disks of rational
maps is not compact anymore, and Herman's theorem does not apply
directly in this situation. The core of our proof is an
extension of Herman's theorem to all centered Blaschke products (see Theorem B of $\S2$). This is the
heart of the whole paper.  One of the key tools used in our proof is the Relative Schwarz Lemma proved by Buff and
Ch\'{e}ritat in \cite{BC}.

The following is a sketch of the organization of the paper.

In $\S2$, we  introduce  Herman's theorem and its extension (Theorem
A and Theorem B).   Since the proof of Theorem B is quite long, we
postpone it until the last section of the paper.

In $\S3$, we prove the Main Theorem. The proof is divided  into two
steps. In the first step,  we prove the Main Theorem under the
condition that  the post-critical set of the rational map does not
intersect the interior of the Siegel disk (Lemma~\ref{con-ext}). In
the second step we prove the Main Theorem in the general case
(Lemma~\ref{post-in}). The proof of Lemma~\ref{con-ext} is based on
Theorem B and Shishikura's strategy. The proof of
Lemma~\ref{post-in} uses Lemma~\ref{con-ext}  and a trick of
holomorphic motion.

In $\S4$, we  prove Theorem B and thus complete the proof of the
Main Theorem.

$\bold{Acknowledgement.}$ The author would like to thank Saeed
Zakeri from whom the author learned the Shishikura's construction of  Siegel disks, which plays a crucial role in the proof of the Main Theorem.  Further
thanks are due to Arnaud Ch$\acute{e}$ritat who carefully read the
manuscript and provided many invaluable comments, especially, he
pointed out to the author that the proof in an early version of the manuscript
does not cover the case that the post-critical set  intersects
the interior of the Siegel disk. Finally, the author would like to express his deep thanks to the two anonymous  referees  for their very detailed reports  which  help the author greatly simplify  and improve  the  original proof.


\section{Herman's Theorem and its extension}
Let  $m = 2d -1$ with $d \ge 2$ being some integer.  Let $\theta =
[a_{1}, \cdots,a_{n}, \cdots]$ be an irrational number with $\sup
\{a_{n}\} < \infty$.  We call such $\theta$ of bounded type.   Let
${\Bbb T}$ denote the unit circle and $R_{\theta}: {\Bbb T} \to
{\Bbb T}$ denote the rigid rotation given by $z \to e^{2 \pi i
\theta} z$. Let ${\mathbf{H}}_{\theta}^{m}$ denote the class of all
the Blaschke products
\begin{equation}\label{Herman-B}
 B(z) = \lambda z^{d} \prod_{i=1}^{d-1} \frac{1 -
\overline{a}_{i}z}{z - a_{i}},
\end{equation} such that
\begin{itemize}
\item[1.] $|a_{i}| < 1$ for all $1 \le i \le d-1$,
\item[2.] $|\lambda| = 1$,
\item[3.] $B|{\Bbb T}: {\Bbb T} \to {\Bbb T}$ is a  circle
homeomorphism of rotation number $\theta$.
\end{itemize}
In one of his three handwritten manuscripts  \cite{H}(see also \cite{C1} and \cite{C2}),  Herman
proved
\begin{Herm}\label{Herman}
Let $m \ge 3$ be an odd integer and $0< \theta< 1 $ be an irrational
number of bounded type. Then there is a constant $1< K(m, \theta)<
\infty$ depending only on $m$ and $\theta$ such that for any $B \in
{\mathbf{H}}_{\theta}^{m}$, there is a  $K(m,
\theta)$-quasi-symmetric homeomorphism $h_{B}$ of the unit circle
such that $B|{\Bbb T} = h_{B}^{-1} \circ R_{\theta}\circ h_{B}$ and
$h_{B}(1) = 1$, where $R_{\theta}: z \mapsto e^{2 \pi i \theta} z$ is the rigid rotation given by $\theta$.
\end{Herm}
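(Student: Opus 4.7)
The plan is to deduce Theorem A from Swiatek's a priori real bounds for analytic critical circle maps of bounded type rotation number. First, I would check that each $B\in\mathbf{H}^{m}_\theta$ restricts to an analytic circle homeomorphism $B|\mathbb{T}\colon\mathbb{T}\to\mathbb{T}$ of mapping degree one --- by the argument principle this degree equals (zeros of $B$ in $\mathbb{D}$) minus (poles of $B$ in $\mathbb{D}$) $=d-(d-1)=1$. Moreover $B$ has only finitely many critical points on $\mathbb{T}$ (at most $2m-2$ with multiplicity), each necessarily of odd local order so that $B|\mathbb{T}$ is locally monotone. Hence $B|\mathbb{T}$ is an analytic critical circle homeomorphism, the class to which Swiatek's theory applies.

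Given this, Swiatek's theorem produces a constant $C$ such that any two adjacent atoms of the $n$-th closest-return dynamical partition $\mathcal{P}_n(B)$ of $\mathbb{T}$ are $C$-comparable uniformly in $n$, provided one has a cross-ratio (Koebe) distortion bound for the iterates of $B$ on a definite annular neighborhood of $\mathbb{T}$. From such real bounds the linearization $h_B$ with $h_B(1)=1$ is constructed as the limit of the piecewise-affine maps matching $\mathcal{P}_n(B)$ to the rotation partition $\mathcal{P}_n(R_\theta)$; the real-bound constant $C$ then translates in a standard way into a quasi-symmetric bound for $h_B$. Thus the theorem reduces to choosing $C$ uniformly on $\mathbf{H}^{m}_\theta$.

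The main obstacle is precisely this uniformity, since $\mathbf{H}^{m}_\theta$ is not automatically compact: a pole $a_i$ could approach $\mathbb{T}$, and then $B$ would acquire a near-singularity on the circle, degrading the distortion estimate. The key claim one must establish --- which is Herman's insight --- is that the compatibility conditions built into $\mathbf{H}^{m}_\theta$, namely that $B|\mathbb{T}$ is a circle homeomorphism of fixed rotation number $\theta$ (which is incompatible with the degree-two near-fold that would arise if any $a_i$ approached $\mathbb{T}$), confine $(a_1,\dots,a_{d-1})$ to a relatively compact subset of $\mathbb{D}^{d-1}$ after normalizing $B$ by the rotation action $B(z)\mapsto e^{-2\pi it}B(e^{2\pi it}z)$. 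With this compactness in hand, Swiatek's distortion constant depends continuously on $B$ in the $C^{3}$-topology on a neighborhood of $\mathbb{T}$, hence is uniformly bounded, yielding $K(m,\theta)$. This very compactness is precisely what breaks down for the larger class of \emph{centered} Blaschke products treated in Theorem B, which is why its extension in \S 4 requires genuinely new input via the Relative Schwarz Lemma of Buff and Ch\'eritat.
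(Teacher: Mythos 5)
The paper does not actually prove Theorem A; it quotes the result from Herman's handwritten manuscript \cite{H} and proves only the compactness of $\mathbf{H}^{m}_{\theta}$ (Lemma~\ref{comp-H}), which it identifies as the essential ingredient of Herman's argument --- and even there the decisive bound $|a_i| \le \rho < 1$ is itself cited from \S15 of \cite{H}. Your reconstruction --- Swiatek real bounds for critical circle maps with bounded-type rotation number, made uniform over the family via compactness --- is consistent with what the paper reports about Herman's proof (cf.\ the observation in \S4.1 that the main task in \cite{H} is to bound the Swiatek distortion uniformly on $\mathbf{H}^{m}_{\theta}$). The degree count, the observation that critical points of $B|\mathbb{T}$ have odd local order, and the passage from real bounds to a quasisymmetric constant are standard and correct.

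The step you leave at a heuristic level is precisely the crux: the compactness. Saying that a pole tending to $\mathbb{T}$ would produce a ``degree-two near-fold'' is not an estimate. What actually makes it work is the angular-derivative identity on $\mathbb{T}$, valid for any $B(z)=\lambda z^{d}\prod_{i=1}^{d-1}\frac{1-\overline{a}_{i}z}{z-a_{i}}$ in $\mathbf{H}^{m}_{\theta}$:
\[
\frac{zB'(z)}{B(z)} \;=\; d \;-\; \sum_{i=1}^{d-1}\frac{1-|a_{i}|^{2}}{|z-a_{i}|^{2}}\,.
\]
For $B|\mathbb{T}$ to be an orientation-preserving homeomorphism this quantity must be nonnegative on $\mathbb{T}$; dropping the other nonnegative summands and evaluating at $z=a_{i}/|a_{i}|$ gives $\frac{1+|a_{i}|}{1-|a_{i}|}\le d$, that is $|a_{i}|\le\frac{d-1}{d+1}=:\rho<1$, uniformly over the family. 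Notice this uses only monotonicity of $B|\mathbb{T}$, not the rotation number. It is also exactly the argument that fails for the larger family $\mathbf{B}^{m}_{\theta}$ of the paper: there the zeros $p_{i}$ inside the disk contribute \emph{positively} to the angular derivative, so they may approach $\mathbb{T}$ without violating injectivity, and the family is genuinely non-compact (as the Remark following Theorem B notes). That failure is what forces Theorem B to replace the compactness argument by the Relative Schwarz Lemma. A minor correction: there is no need to normalize by the rotation action; $|\lambda|=1$ already confines $\lambda$ to a compact set, and the compactness to be established is directly in the parameters $(a_{1},\dots,a_{d-1})$.
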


The proof of Theorem A in \cite{H}  depends essentially on the fact that
the family  ${\mathbf H}_{\theta}^{d}$ is compact in the following
sense.
\begin{lemma}\label{comp-H}
There is an annular neighborhood $H$ of ${\Bbb T}$, such that \begin{itemize}
\item[1.]  all maps
in ${\mathbf{H}}_{\theta}^{m}$ are holomorphic in $H$, and \item[2.]  for
any sequence $\{B_{n}\} \subset {\mathbf{H}}_{\theta}^{m}$, there is a
subsequence $\{B_{n_{k}}\}$ such that  $B_{n_{k}}|H$ converges uniformly
to  $B|H$ where $B \in {\mathbf{H}}_{\theta}^{l}$ and   $1 \le l \le
m$ is  some odd integer. \end{itemize}
\end{lemma}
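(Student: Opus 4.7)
My plan is to establish uniform control on the poles $a_i$ of any $B\in\mathbf{H}_\theta^m$ and then run a standard normal-families argument. The key tool is the explicit angular-derivative formula for $B$: on the unit circle,
\[
\mathrm{Re}\!\left(\frac{zB'(z)}{B(z)}\right)
   \;=\; d \;-\; \sum_{i=1}^{d-1}\frac{1-|a_i|^2}{|z-a_i|^2},
\]
since each factor $(1-\bar{a}_iz)/(z-a_i)$ in~\eqref{Herman-B} contributes a Poisson kernel $-(1-|a_i|^2)/|z-a_i|^2$. Because $B|_{\mathbb T}$ is an orientation-preserving homeomorphism of winding number one, this quantity is non-negative on $\mathbb T$. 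Evaluating at $z=a_j/|a_j|\in\mathbb T$, the point of $\mathbb T$ closest to $a_j$, gives $(1+|a_j|)/(1-|a_j|)\le d$, i.e.\ $|a_j|\le (d-1)/(d+1)<1$, uniformly over $\mathbf{H}_\theta^m$.

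With this pole bound in hand, I would fix any $r\in((d-1)/(d+1),1)$ and set $H=\{r<|z|<1/r\}$. Every $B\in\mathbf{H}_\theta^m$ is then holomorphic on $H$, with poles confined to $\{|z|\le (d-1)/(d+1)\}$ and reflected zeros $1/\overline{a_i}$ confined to $\{|z|\ge (d+1)/(d-1)\}$. Since $|B|=1$ on $\mathbb T$, the maximum principle applied separately on $\mathbb D$ and on $\hat{\mathbb C}\setminus\overline{\mathbb D}$ (each minus the corresponding poles) yields a uniform bound $|B|\le C(d,r)$ on $H$. Montel's theorem then gives normality of $\mathbf{H}_\theta^m$ on $H$.

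Given a sequence $\{B_n\}\subset\mathbf{H}_\theta^m$ with parameters $(\lambda_n,a^{(n)}_1,\ldots,a^{(n)}_{d-1})$, I would pass to a subsequence along which $\lambda_n\to\lambda^*\in\mathbb T$, each $a^{(n)}_i\to a^*_i$ with $|a^*_i|\le (d-1)/(d+1)$, and $B_n\to B^*$ uniformly on $H$. Pointwise, $B^*(z)=\lambda^* z^d\prod_{i=1}^{d-1}(1-\overline{a^*_i}z)/(z-a^*_i)$. If exactly $k$ of the limit parameters $a^*_i$ vanish, each such factor collapses to $1/z$ and cancels one factor of $z$ from $z^d$; after simplification $B^*$ has the Blaschke form~\eqref{Herman-B} with $d-k$ in place of $d$, so its rational degree is $l=2(d-k)-1$, automatically odd and in $[1,m]$.

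The remaining and most delicate step is to verify that $B^*\in\mathbf{H}_\theta^l$ — that is, that $B^*|_{\mathbb T}$ is a circle \emph{homeomorphism} of rotation number exactly $\theta$. Passing the angular-derivative inequality above to the limit shows $B^*|_{\mathbb T}$ has non-negative angular derivative, hence is monotone; combined with winding number one (preserved under uniform convergence on $\mathbb T$, since the limiting zeros and poles stay off $\mathbb T$), this forces $B^*|_{\mathbb T}$ to be a homeomorphism. Continuity of the rotation number on $\mathrm{Homeo}_+(\mathbb T)$ in the uniform topology then gives $\rho(B^*|_{\mathbb T})=\lim\rho(B_n|_{\mathbb T})=\theta$. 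I expect this last step — ruling out degenerations in which $B^*|_{\mathbb T}$ could fail to be injective or have its rotation number jump — to require the most attention, while the pole bound and the normal-families argument are essentially routine consequences of the Poisson-kernel identity.
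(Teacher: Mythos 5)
Your proof is correct and follows the same broad structure as the paper's: obtain a uniform bound on the poles $a_i$, pick an annulus $H$ around $\mathbb{T}$ clear of the poles, and extract a convergent subsequence of the parameters $(\lambda, a_1, \dots, a_{d-1})$. Where you genuinely differ from the paper is the source of the pole bound. The paper simply cites \S 15 of Herman's manuscript for the existence of some $\rho < 1$ with $|a_i| \le \rho$, whereas you derive the explicit, sharper, and $\theta$-independent bound $|a_j| \le (d-1)/(d+1)$ directly from the circle-derivative identity
\[
\frac{zB'(z)}{B(z)}\Big|_{z\in\mathbb T} \;=\; d - \sum_{i=1}^{d-1}\frac{1-|a_i|^2}{|z-a_i|^2} \;\ge\; 0,
\]
evaluated at the nearest boundary point $z = a_j/|a_j|$. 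This makes the lemma self-contained and shows the bound has nothing to do with the rotation number, only with monotonicity of $B|_{\mathbb T}$. You also spell out a step the paper elides: that the limit $B^*|_{\mathbb T}$ is still a homeomorphism of rotation number $\theta$. Your argument there is sound --- the limiting circle derivative is real-analytic, non-negative, and has average $1$, so it vanishes on at most a finite set, and rotation number is continuous on $\mathrm{Homeo}_+(\mathbb T)$ in the uniform topology. One small inaccuracy worth flagging: the uniform bound $|B| \le C(d,r)$ on $H$ does \emph{not} follow from the maximum principle on $\mathbb D$ minus the poles, because a Blaschke product of this Herman form does not map $\mathbb D$ into $\mathbb D$ (it blows up at each $a_i$). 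But the bound follows immediately by direct estimation of the product using $|a_i| \le (d-1)/(d+1) < r < |z| < 1/r$, so the conclusion and the rest of the normal-families argument are unaffected.
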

\begin{proof}
By  $\S15$ of \cite{H},  there is a $0< \rho < 1$ such that for any
$B \in {\mathbf{H}}_{\theta}^{m}$ given by {\rm (\ref{Herman-B})},
one has $ |a_{i}| \le \rho$.  Let
$$
H = \{z\:|\: (1+\rho)/2 < |z| < 2\}.
$$
Then all the maps in
${\mathbf{H}}_{\theta}^{m}$ are holomorphic in $H$.  This proves the first assertion. Let
$$
 B_{n}(z) = \lambda z^{d} \prod_{i=1}^{d-1} \frac{1 -
\overline{a}_{n,i}z}{z - a_{n,i}}.
$$
Since $|a_{n,i}| \le \rho$, there is a subsequence of integers $\{n_{k}\}$ such that for each $1 \le i \le d-1$,  $a_{n_{k}, i} \to b_{i}$  with $0\le |b_{i}| \le \rho$. It follows that as $k  \to \infty$,
$$
\frac{1 -
\overline{a}_{n_{k},i}z}{z - a_{n_{k},i}} \to \frac{1 -
\overline{b}_{i}z}{z - b_{i}}
$$
uniformly on $H$. Let
$$
 B(z) = \lambda z^{d} \prod_{i=1}^{d-1} \frac{1 -
\overline{b}_{i}z}{z - b_{i}}.
$$
Then $B \in {\mathbf{H}}_{\theta}^{l}$  with  $1 \le l \le
m$ being  some odd integer and  $B_{n_{k}} \to B$ uniformly on $H$. This proves the second assertion and Lemma~\ref{comp-H} follows.

\end{proof}
 Theorem A plays an
important role in the study of bounded type Siegel disks of
polynomial maps. Among all of those the most remarkable one is
Shishikura's result which says that any bounded type Siegel disk of
a polynomial map is a quasi-circle with at least one critical point
on it.

 Let $d$, $m$ and $\theta$
be as above. Let ${\mathbf{B}}_{\theta}^{m}$ denote the class of all
the Blaschke products
\begin{equation}\label{Zhang-B}
B(z) = \lambda \prod_{i=1}^{d} \frac{z - p_{i}}{1-
\overline{p}_{i}z} \prod_{j=1}^{d-1} \frac{z - q_{j}}{1-
\overline{q}_{j}z }
\end{equation}
such that
\begin{itemize}
\item[1.] $|p_{i}| < 1$ and $|q_{j}| > 1$ for all $1 \le i \le d$
and $1 \le j \le d-1$,
\item[2.] $|\lambda| = 1$,
\item[3.] $B|{\Bbb T}: {\Bbb T} \to {\Bbb T}$ is a  circle
homeomorphism of rotation number $\theta$.
\end{itemize}

For any $B \in {\mathbf{B}}_{\theta}^{m}$,  by Herman's result in  \cite{H} it is known that the analytic circle mapping $$B|\Bbb T: \Bbb T \to \Bbb T$$ is quasisymmetrically conjugate to the rigid rotation $R_{\theta}: z \mapsto e^{2 \pi i \theta}$.    Then $B|\Bbb T$ has a unique invariant probability measure on $\Bbb T$ which has  no atoms. Let us denote it by $\mu_{B}$.  According to Douady and Earle \cite{DE}, to such $\mu_{B}$, one can assign a vector field $\xi_{\mu_{B}}$ on $\Delta$ as follows,
$$
\xi_{\mu_{B}}(z) = (1 - |z|^{2}) \int_{\Bbb T} \frac{\zeta - z}{1 - \bar{z} \zeta} d \mu_{B}(\zeta), \:\: z \in \Delta.
$$
By Proposition 1 of \cite{DE}, the vector field $\xi_{\mu_{B}}$ has a unique zero in $\Delta$,  which is called the $\emph{conformal barycenter}$ of $\mu_{B}$. Let us denote it by $z_{B}$.  From the above formula it follows that $z_{B} = 0$ if and only if
\begin{equation}\label{clt}
 \int_{\Bbb T} \:\zeta \:d \mu_{B}(\zeta) = 0.
\end{equation}
Note that for any M\"{o}bius map $g$ which maps the unit circle to itself and preserves the orientation, $g_{*} \mu_{B}$ is the unique invariant probability measure for the analytic circle mapping $(g\circ B \circ g^{-1})|\Bbb T: \Bbb T \to \Bbb T$. It is clear that $g_{*} \mu_{B}$ has no atoms.   According to \cite{DE}, the assignment of $\mu \mapsto \xi_{\mu}$ is conformally natural in the following sense: if $g$ is a M\"{o}bius map which maps the unit circle to itself and preserves the orientation, then
$$
\xi_{g^{*}_{\mu_{B}}}(z) = g'(g^{-1}(z)) \cdot \xi_{\mu_{B}}(g^{-1}(z)).
$$
It follows  that if $g$ maps  $z_{B}$ to $0$,
then  the  conformal barycenter of $g_{*} \mu_{B}$ is $0$.
\begin{definition}{\rm
We say $B$ is a centered  Blaschke product if $z_{B} = 0$.
}
\end{definition}
From the previous observation,  any Blaschke product in ${\mathbf{B}}_{\theta}^{m}$ is conjugate to a centered Blaschke product by a M\"{o}bius map which maps the unit circle to itself and preserves the orientation.
The core of the proof of our Main Theorem is the extension of  Herman's theorem to all the centered Blaschke products in ${\mathbf{B}}_{\theta}^{m}$.

\begin{ZB}\label{main}
Let $m \ge 3$ be an odd integer and $\theta = [a_{1}, \cdots,a_{n},
\cdots]$ be a bounded type irrational number. Then there is a
constant $1< M(m, \theta)< \infty$ depending only on $m$ and
$\theta$ such that for any centered Blaschke  product $B$ in  ${\mathbf{B}}_{\theta}^{m}$, the map
$$h_{B}: {\Bbb T} \to {\Bbb T}$$ is an $M(m,
\theta)$-quasisymmetric homeomorphism, where $h_{B}: {\Bbb T} \to
{\Bbb T}$ is the circle homeomorphism  such that $B|{\Bbb T} = h_{B}^{-1}
\circ R_{\theta}\circ h_{B}$ and $h_{B}(1) = 1$.
\end{ZB}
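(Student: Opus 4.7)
I would argue by contradiction using a normal-families compactness scheme, coupled with an induction on the odd degree $m$. Suppose there is a sequence $B_n \in \mathbf{B}_\theta^m$ of centered Blaschke products whose linearizers $h_{B_n}$ fail to be uniformly quasisymmetric. The difficulty is that, unlike ${\mathbf H}_\theta^m$, the family $\mathbf{B}_\theta^m$ is not compact: the zeros $p_i\in\Delta$ and the inner poles $1/\bar q_j\in \Delta$ may approach $\Bbb T$, so the proof of Lemma~\ref{comp-H} does not carry over. The centering condition is what is supposed to restore enough rigidity, and the induction base $m=1$ is the rigid rotation, for which the statement is trivial.

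The non-degenerate case, in which every $p_i$ and every $1/\bar q_j$ stays at a definite distance from $\Bbb T$, is handled directly: the $B_n$ then lie in a relatively compact set of holomorphic maps on a fixed annular neighborhood of $\Bbb T$ with common rotation number $\theta$, so Herman's argument underlying Theorem A applies uniformly to this subfamily (the argument uses only analyticity on such an annular neighborhood together with the rotation number) and contradicts the assumed failure of uniform quasisymmetry.

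The main obstacle is the degenerate case, where some $p_i$ or some $1/\bar q_j$ approaches $\Bbb T$. Here the key observation is that the centering identity $\int_{\Bbb T}\zeta\,d\mu_{B_n}(\zeta)=0$ is preserved by weak-$\ast$ limits of the invariant measures, so the limit $\mu_\infty$ cannot be a Dirac mass; in particular the moving zeros and inner poles cannot all cluster at a single point of $\Bbb T$. I would combine this measure-theoretic constraint with the Relative Schwarz Lemma of Buff--Ch\'eritat, applied to the holomorphic dynamics of $B_n$ on the shrinking annuli on which they remain non-degenerate, to show that every collision of a zero $p_i$ with an inner pole $1/\bar q_j$ on $\Bbb T$ produces an honest cancellation, so that $B_n$ converges off $\Bbb T$ to a \emph{lower}-degree centered Blaschke product $B_\infty \in \mathbf{B}_\theta^{m'}$ with $m'<m$ odd and still of rotation number $\theta$. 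The inductive hypothesis then gives a uniform QS bound for $h_{B_\infty}$, which transfers to $h_{B_n}$ for large $n$ by stability of quasisymmetric conjugacies under uniform convergence, producing the desired contradiction.

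The hardest step will be ruling out uncontrolled coordinated collisions of several zero--inner-pole pairs on $\Bbb T$, which could in principle simultaneously drop the degree \emph{and} destroy the rotation number of the limit, as well as showing that the conformal barycenter of $\mu_\infty$ remains at $0$ after the limiting and any normalizing M\"obius adjustments. The Relative Schwarz Lemma is the essential tool here, as it constrains the hyperbolic behavior of the linearizing coordinates under the degeneration and forces the surviving dynamical data to be compatible with a lower-degree object in $\mathbf{B}_\theta^{m'}$ to which the inductive hypothesis can be applied.
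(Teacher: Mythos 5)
Your proposal takes a genuinely different route from the paper, and unfortunately it has gaps that are not just technical but structural.

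The paper does not use a compactness/normal families argument at all. Instead it works directly and uniformly across the whole non-compact family: it reduces the problem to a cross-ratio distortion bound (Lemma~\ref{Swiatek-D}), reformulates that bound in terms of lengths of simple closed geodesics in the punctured sphere (Lemma~\ref{lemma:ex}, Proposition~\ref{bridge}), and then proves the geodesic-length bound by a telescoping estimate over pullbacks, splitting the indices $k$ into those for which $I^k$ ``shadows'' a critical value of $B$ (at most $6(m-1)$ such indices, each contributing a bounded factor $m(2m-1)$) and those for which it does not (where the Relative Schwarz Lemma gives a multiplicative factor $\prod_z e^{C\mu_z(I^k)}$, summable because $\sum_k \mu_z(I^k)\le 2\pi$). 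The centering hypothesis enters only through Lemma~\ref{centered}, which guarantees a lower bound on $|\Bbb T - I_B^N|$ and hence converts a geodesic-length bound back to a cross-ratio bound. No limit object ever appears.

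Your scheme has at least three gaps. First, your degeneration analysis misidentifies the mechanism. A single zero $p_i\to\zeta\in\Bbb T$ already causes a degeneration with no pole involved: the factor $\frac{z-p_i}{1-\bar p_i z}$ tends to the constant $-\zeta$ away from $\zeta$. Similarly for a single inner pole. After such a degeneration the limit Blaschke product has $d-1$ inner zeros and $d-1$ inner poles, which is \emph{not} of the form required to lie in any $\mathbf{B}_\theta^{m'}$ with $m'$ odd (the form demands one more inner zero than inner pole), so your inductive hypothesis cannot be invoked. Second, and more fundamentally, the final step ``stability of quasisymmetric conjugacies under uniform convergence'' is not a theorem and is in fact the crux of what needs to be proved: the QS constant of $h_{B_n}$ depends on the \emph{infinite} orbit structure of $B_n|\Bbb T$, and uniform closeness of $B_n$ to a nice limit $B_\infty$ on a shrinking annulus (or on $\Bbb T$ minus the degenerating points) does not control $h_{B_n}$ near the degeneration locus. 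Indeed the entire difficulty is that the circle maps may develop nearly-flat spots where the return dynamics slows down unboundedly; if a soft stability principle gave the bound, the theorem would be easy, but the paper explicitly remarks (Remark~\ref{rdd}) that the centered family is not compact, so no limit-and-transfer argument is available. Third, the appeal to the Relative Schwarz Lemma is not attached to any concrete statement; in the paper it is applied to a very specific pair of inclusions $U\setminus Z\hookrightarrow U$, $V\setminus\{z\}\hookrightarrow V$ to estimate $\rho_{U-Z}/\rho_U$ on the core geodesic (Lemma~\ref{nsd}), which has nothing to do with collapsing zero--pole pairs. Your observation that the centering identity passes to weak-$\ast$ limits of $\mu_{B_n}$ is true, but the paper uses the centering condition at finite level, not at a limit, and in a quantitative way (an explicit $\arccos$ bound).

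In short: the paper's proof is constructive and uniform, replacing compactness by a length/modulus estimate that degrades only by a factor $e^{2\pi C}(m(2m-1))^{6(m-1)}$; your proposal attempts a soft contradiction argument whose degenerate-case analysis breaks on parity and on the unsupported stability step.
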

\begin{remark}\label{rdd}{\rm
We would like to remark that for every odd integer $m \ge 3$ and irrational rotation number  $0< \theta < 1$,  the family of centered Blaschke products in ${\mathbf{B}}_{\theta}^{m}$ is not compact in the sense of Lemma~\ref{comp-H}. One can show that for any annular neighborhood $H$ of the unit circle, there is a centered Blaschke product $B$ in ${\mathbf{B}}_{\theta}^{m}$ such that $B$ is not holomorphic in  $H$.
}
\end{remark}
As an immediate corollary of Theorem B,  we have
\begin{corollary}\label{BC}{\rm
Let $m = 2d -1 \ge 3$ be an odd integer and $\theta = [a_{1}, \cdots,a_{n},
\cdots]$ be a bounded type irrational number. Then there is a
constant $1< K(d, \theta)< \infty$ depending only on $d$ and
$\theta$ such that for any Blaschke product $B$ in  ${\mathbf{B}}_{\theta}^{m}$, the map
$$h_{B}: {\Bbb T} \to {\Bbb T}$$ can be extended to a $K(d, \theta)$-quasiconformal homeomorphism of the unit disk to itself,  where $h_{B}: {\Bbb T} \to
{\Bbb T}$ is the circle homeomorphism  such that  $B|{\Bbb T} = h_{B}^{-1}
\circ R_{\theta}\circ h_{B}$ and $h_{B}(1) = 1$. }
\end{corollary}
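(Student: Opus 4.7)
\textbf{Proof plan for Corollary~\ref{BC}.} The strategy is to reduce the general case to the centered case handled by Theorem B, using the conformal naturality of the conformal barycenter discussed just above.

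Let $B \in \mathbf{B}_{\theta}^{m}$ be arbitrary. By Herman's result (already used in the paragraph preceding the definition of "centered"), $B|\Bbb T$ is quasisymmetrically conjugate to $R_{\theta}$, so there is a unique atomless $B|\Bbb T$-invariant probability measure $\mu_B$ on $\Bbb T$, with conformal barycenter $z_B \in \Delta$. Choose an orientation-preserving M\"obius self-map $g$ of the unit disk with $g(z_B) = 0$, and set $B' := g \circ B \circ g^{-1}$. The invariant measure of $B'|\Bbb T$ is $g_{*}\mu_B$, whose conformal barycenter is $0$ by the naturality formula recalled earlier; hence $B'$ is centered. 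I next need to check that $B' \in \mathbf{B}_{\theta}^{m}$: conjugation by a M\"obius map preserves the degree and the symmetry $B(1/\bar z) = 1/\overline{B(z)}$ that characterizes rational maps preserving $\Bbb T$, and counting zeros and poles inside/outside $\Bbb D$ via this symmetry together with the fact that $B'|\Bbb T$ is an orientation-preserving homeomorphism (forcing the "inside minus outside" count of zeros to be $1$) shows that $B'$ has exactly $d$ zeros in $\Delta$ and $d-1$ zeros outside, i.e. the form~(\ref{Zhang-B}). The rotation number of $B'|\Bbb T$ is $\theta$ since $g|\Bbb T$ is an orientation-preserving homeomorphism.

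Theorem B now applies to $B'$: the normalized linearizer $h_{B'}$ is $M(m,\theta)$-quasisymmetric on $\Bbb T$. To relate this to $h_B$, observe that both $h_B$ and $h_{B'} \circ g$ conjugate $B|\Bbb T$ to $R_{\theta}$ on $\Bbb T$, so they differ by a circle map commuting with $R_{\theta}$, necessarily a rigid rotation. Choosing $\alpha$ so that $R_\alpha(h_{B'}(g(1))) = 1$, I obtain
\[
h_B \;=\; R_\alpha \circ h_{B'} \circ g \qquad \text{on } \Bbb T.
\]

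From this formula the quasiconformal extension is immediate: $R_\alpha$ and $g$ are conformal self-maps of $\Delta$, and the $M(m,\theta)$-quasisymmetric map $h_{B'}$ extends (via the Ahlfors--Beurling or Douady--Earle extension) to a $K_0$-quasiconformal self-homeomorphism of $\Delta$, where $K_0$ depends only on $M(m,\theta)$. Composing gives a $K(d,\theta)$-quasiconformal self-homeomorphism of $\Delta$ extending $h_B$, with $K(d,\theta) := K_0(M(2d-1,\theta))$ depending only on $d$ and $\theta$. The only non-trivial step is the bookkeeping that $B'$ remains in $\mathbf{B}_{\theta}^{m}$; everything else is a routine application of Theorem B together with the standard quasisymmetric-to-quasiconformal extension.
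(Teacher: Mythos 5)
Your proof is correct and follows exactly the route the paper intends: the remark immediately before Theorem~B observes that every $B\in\mathbf{B}_{\theta}^{m}$ is conjugate to a centered one by a M\"obius self-map of the disk, and the corollary is then declared "immediate." Your write-up fills in precisely those details — noting in particular that the M\"obius factor $g$ (whose restriction to $\Bbb T$ is \emph{not} uniformly quasisymmetric) costs nothing in the quasiconformal constant because it is absorbed as a conformal factor of the disk extension — so the argument is both correct and essentially the paper's.
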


\section{Proof of The Main Theorem assuming Theorem B}

Let $d \ge 2$ and $0< \theta< 1$ be an irrational number of bounded
type.  Suppose that $f$ is a rational  map of degree $d$ and has  a
fixed Siegel disk $D$  centered at the origin and with rotation
number $\theta$. By a M\"{o}bius conjugation, we may assume that
$\overline{D}$ is contained in a compact set of the complex plane.
Let $\Delta$ denote the unit disk. Let
$$
\lambda: \Delta \to D
$$
be the holomorphic isomorphism such that $\lambda(0) = 0$,
$\lambda'(0)
> 0$, and
$$
\lambda^{-1} \circ f \circ \lambda(z) = e^{2 \pi i \theta} z
$$
for all $z \in \Delta$. For $0 < r < 1$, let
$$
\Gamma_{r} = \{\lambda(r e^{it})\:\big{|}\: 0 \le t \le 2\pi\}.
$$

Let $K > 1$ and $\widehat{\Bbb C}$ be the Riemann sphere. We call a
simple closed curve $\Gamma \subset \widehat{\Bbb C}$ a
$K$-quasi-circle if there is a $K$-quasiconformal homeomorphism
$$
\phi:\widehat{\Bbb C} \to \widehat{\Bbb C}
$$
such that $\Gamma = \phi(\Bbb T)$ where $\Bbb T$ is the unit circle.

\begin{lemma}\label{inv}
If there exists a $1 < K < \infty$ such that  $\Gamma_{r}$ is a
$K$-quasi-circle for all $0 < r < 1$, then $\partial D$ is a
$K$-quasi-circle. In particular, the map $f|\partial D: \partial D
\to
\partial D$ is injective, and thus $\partial D$ contains at least
one of the critical points of $f$.
\end{lemma}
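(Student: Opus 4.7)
The plan is to obtain $\partial D$ as a quasiconformal/Hausdorff limit of the $\Gamma_r$ as $r\to 1$, then read off the boundary dynamics via Carath\'eodory, and finally invoke maximality of $D$ to force a critical point onto $\partial D$.

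\textbf{Step 1 (quasi-circle).} For each $0<r<1$, fix a $K$-quasiconformal $\phi_r:\widehat{\mathbb C}\to\widehat{\mathbb C}$ with $\phi_r(\mathbb T)=\Gamma_r$. I would normalize by postcomposing $\phi_r$ with a M\"obius map so that it fixes three points in general position (say $0$, $\infty$, and a preselected point outside $\overline{D}$). With this normalization the family $\{\phi_r\}$ is normal, and any subsequential limit $\phi_\infty$ is again a $K$-quasiconformal homeomorphism of the sphere. Since $\lambda^{-1}$ maps each compact subset of $D$ into $\{|z|\le c\}$ for some $c<1$, the curves $\Gamma_r$ are eventually trapped in any neighborhood of $\partial D$; combined with the fact that every point of $\partial D$ is a Carath\'eodory limit point of $\Gamma_r$, one gets Hausdorff convergence $\Gamma_r\to \partial D$, so $\phi_\infty(\mathbb T)=\partial D$ and $\partial D$ is a $K$-quasi-circle.

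\textbf{Step 2 (injectivity).} Once $\partial D$ is known to be a Jordan curve, Carath\'eodory's theorem says the linearization $\lambda:\Delta\to D$ extends to a homeomorphism $\overline{\lambda}:\overline{\Delta}\to\overline{D}$. The functional equation $f\circ\lambda=\lambda\circ R_\theta$ then extends continuously to $\overline{\Delta}$, so $f|\partial D$ is conjugate under $\overline{\lambda}$ to the rigid rotation $R_\theta$ on $\mathbb T$. In particular $f|\partial D$ is a homeomorphism, hence injective.

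\textbf{Step 3 (critical point).} I would argue by contradiction: if $\partial D$ contained no critical point, then $f$ would be a local homeomorphism on some open neighborhood $U\supset\overline{D}$. Together with the injectivity of $f|_{\overline D}$ just established, one could pull back $\lambda$ via the appropriate single-valued branches of $f^{-1}$ using the identity $\lambda(z)=f^{-1}\!\bigl(\lambda(R_\theta z)\bigr)$, extending $\lambda$ analytically across $\partial\Delta$ and thereby producing a strictly larger $R_\theta$-invariant disk on which $f$ is holomorphically conjugate to the rotation. This contradicts the maximality of the Siegel disk $D$, so some critical point must lie on $\partial D$.

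The main obstacle lies in Step 1: one must ensure that the normalized limit $\phi_\infty$ is non-degenerate and that $\phi_\infty(\mathbb T)$ equals $\partial D$ exactly, not just a closed subset of it. The three-point normalization kills degeneration, while the two-sided Hausdorff convergence $\Gamma_r\to \partial D$ requires both the easy direction (compact exhaustion of $D$) and a genuine argument that every boundary point is approached. Steps 2 and 3 are then standard, though Step 3 requires care in choosing branches of $f^{-1}$ compatibly on the whole of a neighborhood of $\partial D$.
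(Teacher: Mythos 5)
Your overall plan (extract a qc limit, read off boundary dynamics, invoke maximality) matches the paper's, and Step~2 is exactly the paper's argument. But Steps~1 and~3 both have genuine gaps.

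In Step~1, the normalization does not work as stated. If you \emph{post}compose a map $\phi_r$ (with $\phi_r(\mathbb T)=\Gamma_r$) by a M\"obius map $M_r$ to fix three points, the new map sends $\mathbb T$ to $M_r(\Gamma_r)$, not to $\Gamma_r$; the defining property of $\phi_r$ is destroyed, and you have no control over $M_r$ (it is built from $\phi_r(0),\phi_r(\infty),\phi_r(p)$, which are arbitrary), so it may degenerate along the subsequence. If instead you \emph{pre}compose by a M\"obius map preserving $\mathbb T$, you keep $\phi_r(\mathbb T)=\Gamma_r$, but that group is only three real-dimensional, so you can normalize at most one interior point (say $\phi_r(0)=0$), not three sphere points. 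That single normalization is in fact what the paper uses, via $\omega_n=\sigma_n\circ\eta_n$ with $\omega_n(0)=0$; normality then comes not from three fixed points but from the geometry (the curves $\Gamma_{1-1/n}$ are trapped in a fixed annulus around $0$). After that, the paper identifies the limit domain by a two-way inclusion $D=\omega(\Delta)$, and the inclusion $\omega(\Delta)\subset D$ is proved by a modulus estimate on the annuli $\omega_l(\{|z|<|\zeta|<1\})$, not by a Hausdorff-convergence argument. Your Hausdorff idea is sound in principle and, once the normalization is repaired, would give the identification $\omega(\mathbb T)=\partial D$ more directly than the modulus argument; but the convergence $\Gamma_r\to\partial D$ still needs a short argument (the direction ``every point of $\partial D$ is approached'' follows because a ball around a boundary point meets both the inside and the outside of each $\Gamma_r$ and hence meets $\Gamma_r$).

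In Step~3, the analytic-continuation mechanism you sketch does not produce any enlargement of the linearization domain: the functional equation $\lambda(z)=f^{-1}\bigl(\lambda(R_\theta z)\bigr)$ has $R_\theta$ preserving $\{|z|<1\}$, so the right-hand side is defined exactly where the left-hand side already is, and no new points of $|z|\ge 1$ are reached. Absence of critical points on $\partial D$ together with injectivity of $f|\partial D$ does not by itself let you ``push $\lambda$ outward'' by this identity, because the continuous boundary extension of $\lambda$ to $\mathbb T$ (Carath\'eodory) is not known to be analytic there---that would beg the question. The paper does not attempt this; it invokes Herman's theorem (reference [He], \emph{Are there critical points on the boundary of singular domains?}) as a black box, which is the genuinely nontrivial step here. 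Your sketch captures the spirit of why one expects a critical point, but the mechanism is missing; a correct argument along these lines needs a different device (e.g., Schwarz reflection through the Jordan boundary combined with Herman's linearization theory for analytic circle diffeomorphisms), and that is precisely the content of the cited theorem.
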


\begin{proof}
By assumption, for any integer $n > 1$, there is a
$K$-quasiconformal homeomorphism $ \sigma_{n}: \widehat{\Bbb C} \to
\widehat{\Bbb C} $ such that  $$\sigma_{n}(\Bbb T) =
\Gamma_{1-1/n}.$$ We may assume that $\sigma_{n}$ maps the origin
into the inside of $\Gamma_{1-1/n}$. Let $\eta_{n}$ be a M\"{o}bius
map which preserves the unit disk and maps the origin to
$\sigma_{n}^{-1}(0)$. Let $$\omega_{n} = \sigma_{n} \circ
\eta_{n}.$$ Then $\omega_{n}$ is  a $K$-quasiconformal homeomorphism
of the sphere and moreover, $\omega_{n}({\Bbb T}) = \Gamma_{1-1/n}$ and
$\omega_{n}(0) = 0$. It follows that any limit map of the sequence
$\{\omega_{n}\}$ is a $K$-quasiconformal homeomorphism of the
sphere. By taking a convergent subsequence, we may assume that there
is a $K$-quasi-conformal homeomorphism $$\omega: \widehat{\Bbb C} \to \widehat{\Bbb
C}$$ such that $\omega_{n}$ converges uniformly to $\omega$ with
respect to the spherical metric.

We claim that $$D  =
\omega(\Delta).$$ Let us prove the claim now. For $r > 0$, let $\Delta_{r}$ denote the Euclidean disk centered at the origin and with radius $r$. Then for any $1 < n < l$ we have
$\lambda(\Delta_{1-1/n}) \subset \lambda(\Delta_{1-1/l})$. Since $\omega_{l}(\Delta) = \lambda(\Delta_{1-1/l})$, we have
\begin{equation}\label{99g}
\lambda(\Delta_{1-1/n}) \subset \omega_{l}(\Delta).
\end{equation} Let us first prove that
\begin{equation}\label{98g}
\lambda(\Delta_{1-1/n}) \subset \omega(\Delta).
\end{equation}   Suppose (\ref{98g}) were not true. Since $\lambda(\Delta_{1-1/n})$ is open and $\omega(\Delta)$ is a quasi-disk, there would be a point $z \in \lambda(\Delta_{1-1/n})$ such that $d(z, \overline{\omega(\Delta)}) = \delta > 0$. Here $d(\cdot, \cdot)$ denotes the distance with respect to the spherical metric.
Since $\omega_{l} \to \omega$ uniformly with respect to the spherical metric, we have $$d(z, \overline{\omega_{l}(\Delta)}) >  \delta/2 > 0$$ for all $l$ large enough. This is a contradiction with (\ref{99g}). Thus (\ref{98g}) has been proved.
Since $D = \lambda(\Delta)$,  by letting $n \to \infty$ in the left hand of (\ref{98g}), we get
\begin{equation}\label{3g}
D \subset \omega(\Delta).
\end{equation}
Note that for any $l \ge 1$, we have
\begin{equation}\label{6g}
\omega_{l}(\Delta) = \lambda(\Delta_{1-1/l}) \subset \lambda(\Delta) = D.
\end{equation}
For any $z \in \Delta$, let $H = \{\zeta\:|\: |z|< |\zeta| < 1\}$. Since $\omega_{l}(0) = 0$,  $\omega_{l}(H)$ is an annulus contained in $D$ which separates $\{0, \omega_{l}(z)\}$ and $\partial D$.   Since  $\omega_{l}$ is $K$-quasiconformal for all $l$,  it follows that
$$
{\rm mod}(\omega_{l}(H)) \ge \frac{1}{K} {\rm mod}(H) =  \frac{1}{2 K \pi} \log \frac{1}{|z|}.$$  This implies that there  is some $\delta > 0$ independent of $l$ such that $$d(\omega_{l}(z), \partial D) \ge \delta$$ for all $l$. Since $\omega_{l}(z) \in D$, it follows that  $B_{\delta}(w_{l}(z)) \subset D$ for all $l$.   Since  $\omega_{l} \to \omega$ uniformly with respect to the spherical metric, it follows that $w(z) \in D$. Since $z$ is arbitrary, we have
\begin{equation}\label{12g}
\omega(\Delta) \subset D.
\end{equation}
From (\ref{3g}) and (\ref{12g}) it follows that $D = \omega(\Delta)$ and the claim has been proved.

From the claim we have $\partial D =
\omega(\Bbb T)$.
Since $\omega$ is a $K$-quasiconformal homeomorphism of the sphere to itself,  it follows that $\partial D$  is a $K$-quasi-circle and $D$  is a $K$-quasi-disk.  Since $\lambda : \Delta \to D$ is a holomorphic isomorphism,   one can
homeomorphically extended $\lambda$ to $\partial \Delta$. So we have
$$
\lambda^{-1} \circ f \circ \lambda(z) = e^{2 \pi i \theta} z
$$
holds for all $z \in \partial \Delta$. This implies that
$$
f|\partial D: \partial D \to \partial D
$$
is injective. By a result of Herman (see \cite{He}), it follows that
$\partial D$ contains at least one of the critical points of $f$.
This completes the proof of Lemma~\ref{inv}.
\end{proof}

Let $0< r < 1$ and let
$$
D_{r} = \{\lambda(se^{it})\:\big{|}\: 0 \le s < r, \: 0 \le t \le
2\pi\}.
$$
Let
$$
\phi: \widehat{\Bbb C} \setminus \overline{\Delta} \to \widehat{\Bbb
C} \setminus \overline{D_{r}}
$$
be the holomorphic isomorphism such that $\phi(\infty) = \infty$ and
$\phi'(\infty) > 0$.  Take $r < R < 1$. Let
$$
\Theta_{R} = \phi^{-1}(\Gamma_{R}).
$$
Then $\Theta_{R}$ is a real-analytic simple closed curve which
surrounds the closed unit disk. Let $\Theta_{R}^{*}$ denote the
symmetric image of $\Theta_{R}$ about the unit circle. Let
$\Lambda_{R}$ denote the bounded component of $\widehat{\Bbb C}
\setminus \Theta_{R}^{*}$. It is clear that $\Lambda_{R}$ is a
Jordan domain  with smooth boundary which lies in the inside of the unit disk and  contains the origin. Let
$$
A_{R} = \Delta \setminus \overline{\Lambda_{R}}
$$
be the annulus bounded by $\Bbb T$ and $\Theta_{R}^{*}$.

Take $r_{0} > 0$ small enough such that $\overline{\Delta_{r_{0}}} \subset D_{r}$ where $\Delta_{r_{0}}   = \{z\:|\: |z| < r_{0}\}$.  Let $\eta: \Lambda_{R} \to \Delta_{r_{0}}$ be the Riemann isomorphism such that $\eta(0) = 0$ and $\eta'(0) > 0$. Since $\partial \Lambda_{R}$, $\partial \Delta_{r_{0}}$, $\partial \Delta$ and $\partial D_{r}$ are all smooth curves, there is a quasiconformal homeomorphism $\Phi: \widehat{\Bbb C} \to
\widehat{\Bbb C}$ such that \begin{itemize}
\item[1.]  $\Phi(z) = \phi(z)$ in the outside of
the unit disk, and  \item[2.] $\Phi(z) = \eta(z)$ in $\Lambda_{R}$, and  \item[3.] $\Phi$ is
quasiconformal in $A_{R}$. \end{itemize}

 For $\zeta \in \Bbb C \cup \{\infty\}$, let $\zeta^{*} = 1/\bar{\zeta}$ be the symmetric image of $\zeta$ about the unit circle.  Define
\begin{equation}
G(z) =
\begin{cases}
\Phi^{-1} \circ f \circ \Phi(z) & \text{for $|z| \ge 1$,}\\
(\Phi^{-1} \circ f \circ \Phi(z^{*}))^{*} & \text{for $|z|< 1$ }.
\end{cases}
\end{equation}
Let
$$
H_{r} = \Phi^{-1}(\{\lambda(s e^{it})\:\big{|}\: r \le s < 1, 0 \le
t \le 2\pi\}.
$$
Let $H_{r}^{*}$ denote the symmetric image of $H_{r}$ about the unit
circle. Then $H_{r} \cup H_{r}^{*}$ is an annular neighborhood of
the unit circle.  Throughout the following, let us set $$m = 2d - 1.$$ By the construction, we have
\begin{lemma}\label{rr-r}
The map $G$ is a degree $m$ branched covering map of
the sphere to itself which is  holomorphic in $H_{r} \cup
H_{r}^{*}$. Moreover, $G$ is holomorphically conjugate to the rigid
rotation $z \mapsto e^{2 \pi i \theta} z$ in $H_{r} \cup H_{r}^{*}$.
\end{lemma}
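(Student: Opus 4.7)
The plan is to verify three claims: $(i)$ $G$ is holomorphic on $H_{r}\cup H_{r}^{*}$; $(ii)$ this restriction is holomorphically conjugate to $R_{\theta}$; $(iii)$ $G$ is a degree $m$ branched covering of the sphere to itself.

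For $(i)$ and $(ii)$, I first observe that the curve $\partial D_{r}=\lambda(\{|z|=r\})$ is real-analytic, because $\lambda$ is holomorphic in a neighborhood of $\overline{\Delta_{r}}$. By the Schwarz reflection principle, $\phi$ then extends holomorphically across $\Bbb T$ to a biholomorphism of a neighborhood of $\Bbb T$ onto a neighborhood of $\partial D_{r}$, and the same is true of $\Phi$ from the outside, since $\Phi=\phi$ there. Because $R_{\theta}$ preserves every round annulus centered at $0$ and $f|_{D}=\lambda\circ R_{\theta}\circ\lambda^{-1}$, the set $\Omega_{r}:=\lambda(\{r\le|z|<1\})$ is $f$-invariant. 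Hence on $H_{r}$ the composition $G=\phi^{-1}\circ f\circ\phi$ is well defined and holomorphic, and $\psi:=\phi^{-1}\circ\lambda:\{r\le|z|<1\}\to H_{r}$ is a biholomorphism satisfying $\psi\circ R_{\theta}=G|_{H_{r}}\circ\psi$. By the defining formula, $G|_{H_{r}^{*}}$ is the Schwarz reflection of $G|_{H_{r}}$; it is therefore holomorphic and, since $z\mapsto 1/\bar z$ commutes with $R_{\theta}$, conjugate to $R_{\theta}$ via the reflected biholomorphism $\psi^{*}(z):=1/\overline{\psi(1/\bar z)}$. The two formulas for $G$ coincide on $\Bbb T$ (where both take values in $\Bbb T$), so $G$ is in fact holomorphic on the entire annular neighborhood $H_{r}\cup H_{r}^{*}$.

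For $(iii)$, the map $G$ is everywhere a continuous topological branched cover: on $|z|\ge 1$ it is the quasiconformal conjugate $\Phi^{-1}\circ f\circ\Phi$ of the degree $d$ branched cover $f$, and on $|z|<1$ it is the Schwarz reflection of the same. I compute the degree by counting preimages of a generic $w$ in the exterior of $\Bbb T$. The exterior preimages correspond under $\Phi$ to $f$-preimages of $\Phi(w)\in\widehat{\Bbb C}\setminus\overline{D_{r}}$; since $f(\overline{D_{r}})=\overline{D_{r}}$, all $d$ of these lie outside $\overline{D_{r}}$, giving $d$ exterior preimages of $w$. The interior preimages correspond, via the reflection formula, to $f$-preimages of $\Phi(w^{*})\in D_{r}$ that lie outside $\overline{D_{r}}$; since $f|_{D}$ is conjugate to the injective rotation $R_{\theta}$, exactly one such $f$-preimage lies in $D_{r}$ while the remaining $d-1$ lie outside $D$, and hence outside $\overline{D_{r}}$, producing $d-1$ interior preimages of $w$. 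The degree is therefore $d+(d-1)=m$.

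The principal subtlety is the degree count, which relies on the identity $f^{-1}(\overline{D_{r}})\cap D=\overline{D_{r}}$---a consequence of $f$ being linearizable on the entire Siegel disk $D$, not merely on $\overline{D_{r}}$. Once this is in place, the holomorphicity across $\Bbb T$ is a routine Schwarz reflection argument, exploiting the fact that conjugation by $\phi$ on the outside promotes $\partial D_{r}$ to a real-analytic $f$-invariant curve.
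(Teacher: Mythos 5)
The paper does not actually prove this lemma; it simply asserts it with the phrase ``By the construction, we have.'' Your write-up supplies the verification that this phrase leaves implicit, and its essential content is correct: holomorphicity across $\mathbb{T}$ by Schwarz reflection (using the real-analyticity of $\partial D_{r}$, hence the analytic extension of $\phi$ across $\mathbb{T}$); the conjugacy to $R_{\theta}$ via $\phi^{-1}\circ\lambda$; and the degree count $d+(d-1)=m$, which you correctly derive from the forward invariance of $\overline{D_{r}}$ under $f$ together with the injectivity of $f|_{D}$ (so that a generic point of $D_{r}$ has exactly one $f$-preimage in $D$, and the remaining $d-1$ lie outside $\overline{D}\supset\overline{D_{r}}$, the point on $\partial D$ being excluded because $f(\partial D)\subset\partial D$).

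One small repair is needed in the reflection step for assertion (ii). As you define it, $\psi=\phi^{-1}\circ\lambda:\{r\le|z|<1\}\to H_{r}$ carries the \emph{inner} circle $\{|z|=r\}$, not $\mathbb{T}$, onto $\mathbb{T}$, so the formula $\psi^{*}(z)=1/\overline{\psi(1/\bar z)}$ is defined on the disjoint annulus $\{1<|z|\le 1/r\}$ and cannot be glued with $\psi$ to produce a single linearization of $G$ on $H_{r}\cup H_{r}^{*}$. The fix is immediate: rescale first, replacing $\psi$ by $w\mapsto\psi(rw)$ on $\{1\le|w|<1/r\}$ so that it sends $\mathbb{T}$ to $\mathbb{T}$, and then reflect across $\mathbb{T}$; or equivalently, keep $\psi$ and reflect across $\{|z|=r\}$ via $z\mapsto 1/\overline{\psi(r^{2}/\bar z)}$. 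Either choice yields a single biholomorphism of an $R_{\theta}$-invariant round annulus onto $H_{r}\cup H_{r}^{*}$ conjugating $R_{\theta}$ to $G$, as the lemma requires. (Alternatively one may simply note that $G$ restricts to a holomorphic automorphism of the finite-modulus annulus $H_{r}\cup H_{r}^{*}$ that preserves $\mathbb{T}$ with irrational rotation number $\theta$; any such automorphism is a rotation in uniformizing coordinates.) Apart from this notational slip, the argument is sound.
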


From the construction we see if $G$ is quasiconformal at some point
$z$, then $G(z)$ lies in $H_{r} \cup H_{r}^{*}$. Let $\mu_{0}$
denote the standard complex structure in $H_{r} \cup H_{r}^{*}$. Let $G_{0} = G|(H_{r} \cup H_{r}^{*})$ denote the restriction of $G$ to $H_{r} \cup H_{r}^{*}$. By
Lemma~\ref{rr-r}  the map $$G_{0}: H_{r} \cup H_{r}^{*} \to H_{r}\cup H_{r}^{*}$$ is a holomorphic isomorphism and $\mu_{0}$ is $G_{0}$-invariant.  So one can pull back
$\mu_{0}$ by the iteration of $G$ to get a $G$-invariant complex
structure $\mu$ on the whole sphere $\widehat{\Bbb C}$. It follows
from the symmetric property of $G$ and $\mu_{0}$ that $\mu$ is
symmetric about the unit circle.

Note that  if $G$ is quasiconformal at some point $z$ with $|z| >
1$, then $G(z)$ actually belongs to $H_{r}^{*}$ which is contained
in the inside of the unit disk. This implies
\begin{lemma}\label{ee-e}
For almost every $z$ in the outside of the unit disk, if $\mu(z) \ne
0$, then  there exists some integer $k \ge 1$ such that $G^{k}(z)
\in \Delta$.
\end{lemma}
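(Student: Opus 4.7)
The plan is to exploit the observation made immediately before the lemma: if $G$ is quasiconformal (with non-trivial dilatation) at a point $w$ with $|w|>1$, then $G(w)\in H_r^*\subset\Delta$. Combined with the way $\mu$ is built, this reduces the proof to showing that $\mu(z)\ne 0$ forces a non-holomorphic iterate somewhere along the forward $G$-orbit of $z$.

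First I would recall that $\mu_0$ is the standard structure (zero Beltrami coefficient) on $H_r\cup H_r^*$ and that $\mu$ is obtained by pulling $\mu_0$ back along iterates of $G$. Since the pull-back of the zero Beltrami coefficient by a holomorphic map is again zero, if $G$ were holomorphic at every point of the forward orbit of $z$ we would have $\mu(z)=0$. The hypothesis $\mu(z)\ne 0$ therefore produces a smallest integer $j\ge 0$ at which $G$ fails to be holomorphic at $G^{j}(z)$.

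The next step is to dispose of the measure-zero boundary issue. Because $G$ is quasi-regular and $\partial\Delta$ has zero Lebesgue measure, each $G^{-k}(\partial\Delta)$ also has measure zero, so for almost every starting point $z$ the orbit $\{G^{i}(z)\}$ avoids $\partial\Delta$ altogether; each iterate then lies either in $\Delta$ or in the exterior $\widehat{\Bbb C}\setminus\overline{\Delta}$.

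Finally, take such a generic $z$ with $|z|>1$ and $\mu(z)\ne 0$, and let $j$ be as above. If $G^{i}(z)\in\Delta$ for some $0<i\le j$ we are done; otherwise $|G^{i}(z)|>1$ throughout $0\le i\le j$, and the preceding observation applied to $w=G^{j}(z)$ gives $G^{j+1}(z)\in H_r^*\subset\Delta$, so $k=j+1$ works. The main delicate point I expect is justifying the implication ``$\mu(z)\ne 0\Rightarrow$ non-holomorphic iterate on the orbit''; this amounts to noting that the only source of dilatation for $G$ on $\{|w|>1\}$ is $\Phi^{-1}$ acting on the quasiconformal annulus $\Phi(A_R)=D_r\setminus\overline{\Delta_{r_0}}$, together with the identity $\Phi^{-1}(D_r\setminus\overline{\Delta_{r_0}})=A_R\subset H_r^*$, which is precisely the content of the cited observation and makes the whole argument go through.
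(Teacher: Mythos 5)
Your argument is exactly the one the paper intends: it takes the observation immediately preceding the lemma (non-conformality of $G$ at a point outside $\overline{\Delta}$ forces the image into $H_r^*\subset\Delta$), combines it with the fact that $\mu$ is the pullback of the zero Beltrami coefficient $\mu_0$ along iterates (so $\mu(z)\ne 0$ forces a non-holomorphic iterate on the forward orbit), and disposes of the $\partial\Delta$ issue by a measure-zero argument. This correctly and completely fills in the ``This implies'' that the paper leaves to the reader, with the same mechanism.
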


Let $\Psi$ denote the quasiconformal homeomorphism which solves the
Beltrami equation given by $\mu$ and  fixes $0$, $1$, and the
infinity. Let
$$
B(z) = \Psi \circ G \circ \Psi^{-1}(z).
$$
Since $\mu$ is symmetric about the unit circle, the map $$z \mapsto (\Psi(z^{*}))^{*}$$ is also a quasiconformal homeomorphism of the sphere to itself which has  Beltrami coefficient $\mu$. Note that it also fixes $0$, $1$ and the infinity. So $\Psi(z) = (\Psi(z^{*}))^{*}$ for all $z \in \widehat{\Bbb C}$. Since $G(z^{*}) = (G(z))^{*}$ for all $z \in \widehat{\Bbb C}$, it follows that $B(z^{*}) = (B(z))^{*}$ for all $z \in \widehat{\Bbb C}$.   This implies that
\begin{lemma}\label{Shi-con}
$B \in {\mathbf B}_{\theta}^{m}$.
\end{lemma}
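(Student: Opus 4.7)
Plan: The plan is to verify in turn the three conditions defining the class ${\mathbf B}_\theta^m$: (i) $B$ is a Blaschke product of the form (\ref{Zhang-B}) with $d$ zeros inside $\Delta$ and $d-1$ zeros outside; (ii) $|\lambda|=1$; and (iii) $B|\Bbb T$ is a circle homeomorphism of rotation number $\theta$. The strategy is first to show $B$ is rational of degree $m$, then to use the reflection symmetry already derived before the statement, and finally to count zeros and poles by a winding-number argument.

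First I would verify that $B$ is a holomorphic self-map of $\widehat{\Bbb C}$. Since $\mu$ is $G$-invariant by construction and $\Psi$ pushes $\mu$ forward to the standard complex structure on the sphere, $B=\Psi\circ G\circ\Psi^{-1}$ preserves the standard complex structure almost everywhere; hence $B$ is holomorphic. Lemma~\ref{rr-r} exhibits $G$ as a degree-$m$ branched cover of the sphere, and $\Psi$ is a global homeomorphism, so $B$ is rational of degree $m$.

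The symmetry identity $B(z^*)=(B(z))^*$ is already derived in the paragraph preceding the statement. Since $\Psi$ commutes with $z\mapsto z^*$, it preserves $\Bbb T$; and $G|\Bbb T$ is a homeomorphism of $\Bbb T$ conjugate via $\Phi|\Bbb T=\phi|\Bbb T$ to $f|\partial D_r$, which is the $\theta$-rotation in the linearizing coordinate $\lambda$. Consequently $B|\Bbb T$ is a circle homeomorphism of rotation number $\theta$, establishing (iii) and also showing that $B$ has no zero or pole on $\Bbb T$.

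Finally I derive the prescribed form. The symmetry pairs each zero $p$ of $B$ with a pole at $p^*=1/\bar p$, so we may write
\[
B(z)=\lambda\prod_{i=1}^{m}\frac{z-a_i}{1-\bar a_i z}\qquad(a_i\notin\Bbb T).
\]
If $d'$ of the zeros lie in $\Delta$ and the remaining $m-d'$ lie outside, then by symmetry $d'$ of the poles lie outside and $m-d'$ inside. By the argument principle, the topological degree of $B|\Bbb T$ as a self-map of $\Bbb T$ equals $d'-(m-d')=2d'-m$; this equals $+1$ since $B|\Bbb T$ is an orientation-preserving homeomorphism, forcing $d'=(m+1)/2=d$. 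Regrouping the zeros by location (those inside as $p_i$, those outside as $q_j$) yields exactly the form (\ref{Zhang-B}). The standard identity $|(z-a)/(1-\bar a z)|=1$ on $\Bbb T$ for any $a\notin\Bbb T$, combined with $|B(z)|=1$ on $\Bbb T$, forces $|\lambda|=1$. The main piece of work is simply lining up the various symmetries and normalizations of $G$, $\mu$, $\Psi$, and $\Phi$; once this bookkeeping is in place the verification is routine.
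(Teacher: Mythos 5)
Your proof is correct and takes the same route the paper has in mind: the paper simply establishes the reflection symmetry $B(z^*)=(B(z))^*$ in the paragraph preceding the lemma and then asserts the conclusion, leaving to the reader exactly the bookkeeping you supply (holomorphicity and degree of $B$, the pairing of zeros with poles under $z\mapsto z^*$, the argument-principle count giving $d$ zeros in $\Delta$, and the identification $|\lambda|=1$). Your fleshed-out verification is accurate and fills the gap the paper leaves implicit.
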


\begin{lemma}\label{jj-j}
For almost every $z$ in the outside of the unit disk, if $\Psi^{-1}$
is not conformal at $z$ then there is some integer $k \ge 1$ such
that $B^{k}(z) \in \Delta$.
\end{lemma}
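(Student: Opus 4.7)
The plan is to transfer Lemma~\ref{ee-e} through the conjugating quasiconformal map $\Psi$. First I would record two preliminary facts about $\Psi$. Because the Beltrami coefficient $\mu$ is symmetric about the unit circle and $\Psi$ is normalized by $0$, $1$, $\infty$, the identity $\Psi(z) = (\Psi(z^{*}))^{*}$ established just before Lemma~\ref{Shi-con} forces $\Psi({\Bbb T}) = {\Bbb T}$, so $\Psi$ preserves both $\Delta$ and $\widehat{\Bbb C} \setminus \overline{\Delta}$. In particular, if $z$ lies outside the unit disk then $w := \Psi^{-1}(z)$ does too.

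Next I would invoke the standard characterization of conformality for quasiconformal inverses: at any point $z$ where $\Psi^{-1}$ is differentiable (which is almost every point), $\Psi^{-1}$ is conformal at $z$ if and only if the Beltrami coefficient of $\Psi$ vanishes at $w = \Psi^{-1}(z)$, i.e.\ $\mu(w) = 0$. Hence the hypothesis that $\Psi^{-1}$ is not conformal at $z$ gives, for a.e. such $z$, that $\mu(w) \neq 0$.

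Now I would apply Lemma~\ref{ee-e} to the point $w$: for almost every $w$ outside the unit disk with $\mu(w) \neq 0$, there is an integer $k \ge 1$ with $G^{k}(w) \in \Delta$. Conjugating by $\Psi$ and using $B = \Psi \circ G \circ \Psi^{-1}$ together with $\Psi(\Delta) = \Delta$ gives
\[
B^{k}(z) \;=\; \Psi \circ G^{k} \circ \Psi^{-1}(z) \;=\; \Psi(G^{k}(w)) \;\in\; \Psi(\Delta) \;=\; \Delta,
\]
which is the desired conclusion.

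The only point that needs a moment of care is the matching of the two ``almost every'' statements: the exceptional set in Lemma~\ref{ee-e} is a Lebesgue null set in the $w$-coordinate, and I would use that a quasiconformal homeomorphism sends null sets to null sets (absolute continuity of $\Psi$) to conclude that its $\Psi$-image is also null in the $z$-coordinate. I do not expect any genuine obstacle here; the substance of the lemma is really Lemma~\ref{ee-e}, and this statement is essentially its translation to the $B$-coordinates via $\Psi$.
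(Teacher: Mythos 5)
Your proposal is correct and follows essentially the same route as the paper: transfer the non-conformality of $\Psi^{-1}$ at $z$ to non-conformality of $\Psi$ (equivalently $\mu \neq 0$) at $w=\Psi^{-1}(z)$, apply Lemma~\ref{ee-e} to $w$, and conjugate back using $B^{k}=\Psi\circ G^{k}\circ\Psi^{-1}$ together with the fact that the symmetry of $\mu$ forces $\Psi(\Delta)=\Delta$. The only difference is that you spell out two points the paper leaves implicit — the equivalence between ``$\Psi$ not conformal at $w$'' and ``$\mu(w)\neq 0$,'' and the transfer of null sets under the quasiconformal map $\Psi$ — both of which are standard and correctly invoked.
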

\begin{proof}
Note that $\Psi^{-1}$ is not conformal at $z$ for some $|z| > 1$ if
and only if $\Psi$ is not conformal at $\Psi^{-1}(z)$. From
Lemma~\ref{ee-e} it follows that there is some integer $k \ge 1$
such that $G^{k}(\Psi^{-1}(z)) \in \Delta$. By the symmetric property of $\Psi$, $\Psi$ preserves the unit circle and thus  maps the
unit disk homeomorphically onto the unit disk.  We thus have
$$B^{k}(z) = (\Psi \circ G^{k} \circ \Psi^{-1})(z) \in \Delta.$$ The
lemma follows.
\end{proof}

Let  $h_{B}: {\Bbb T} \to {\Bbb T}$ be the circle homeomorphism such
that $h_{B}(1) = 1$ and
$$
B|{\Bbb T} = h_{B}^{-1} \circ R_{\theta} \circ h_{B}.
$$
By Corollary~\ref{BC}, one can extend $h_{B}$ to a $K(d,
\theta)$-quasiconformal homeomorphism
$$
H_{B}: \Delta \to \Delta
$$
where $1< K(d, \theta)< \infty$ is some constant depending only on
 $d$ and $\theta$. Now let us define
the modified Blaschke product as follows.
$$
\widehat{B}(z) =
\begin{cases}
 B(z) & \text{ for $|z| \ge 1$}, \\
 H_{B}^{-1} \circ R_{\theta} \circ H_{B}(z)& \text{ for $z \in \Delta$}.
\end{cases}
$$

From the above construction,  we have
\begin{proposition}\label{k90}{\rm
Let $z \in \widehat{\Bbb C} \setminus \Delta$. Then $$\widehat{B}(z) \notin \Delta   \Leftrightarrow f(\Phi \circ \Psi^{-1}(z)) \notin D_{r}.$$  Moreover, if $\widehat{B}(z) \notin \Delta$, then
$$
\Phi\circ \Psi^{-1} (\widehat{B}(z)) = f(\Phi \circ \Psi^{-1}(z)).
$$
}
\end{proposition}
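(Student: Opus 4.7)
The plan is simply to unravel the definitions, relying on two key facts: the symmetry of $\Psi$ about $\Bbb T$ (noted in the construction, which forces $\Psi(\Bbb T)=\Bbb T$ and hence $\Psi(\Delta)=\Delta$) and the identity $\Phi(\Delta)=D_r$. Since $z\in\widehat{\Bbb C}\setminus\Delta$, the second branch of $\widehat B$ plays no role, so $\widehat B(z)=B(z)=\Psi\circ G\circ\Psi^{-1}(z)$. Writing $w:=\Psi^{-1}(z)$, the symmetry of $\Psi$ gives $w\in\widehat{\Bbb C}\setminus\Delta$, so the outside-of-$\Bbb T$ branch of $G$ applies and yields $G(w)=\Phi^{-1}(f(\Phi(w)))$. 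Hence
\[
\Psi^{-1}(\widehat B(z))=\Phi^{-1}\bigl(f(\Phi\circ\Psi^{-1}(z))\bigr),
\]
and applying $\Phi$ to both sides produces the claimed identity.

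For the equivalence, the same symmetry $\Psi(\Delta)=\Delta$ shows that $\widehat B(z)\notin\Delta$ iff $\Psi^{-1}(\widehat B(z))\notin\Delta$, iff $\Phi^{-1}(f(\Phi\circ\Psi^{-1}(z)))\notin\Delta$, iff $f(\Phi\circ\Psi^{-1}(z))\notin\Phi(\Delta)$. Everything then reduces to showing that $\Phi(\Delta)=D_r$.

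The only step with any real content is this last identity, and I would argue it as follows: by construction $\Phi$ agrees with the conformal isomorphism $\phi:\widehat{\Bbb C}\setminus\overline\Delta\to\widehat{\Bbb C}\setminus\overline{D_r}$ on the outside of $\Bbb T$, and since both $\Bbb T$ and $\Gamma_r=\partial D_r$ are real-analytic curves, $\phi$ extends continuously across $\Bbb T$ with $\phi(\Bbb T)=\partial D_r$, so $\Phi(\Bbb T)=\partial D_r$. Being a homeomorphism of the sphere, $\Phi$ must send the two complementary components of $\Bbb T$ to the two complementary components of $\partial D_r$; the outside component is already mapped to $\widehat{\Bbb C}\setminus\overline{D_r}$, so by elimination $\Phi(\Delta)=D_r$. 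The proposition is essentially a bookkeeping exercise in the compatibility between $\Phi$, $\Psi$, $G$, and $B$, and no serious obstacle is to be expected.
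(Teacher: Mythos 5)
Your proof is correct, and it fills in exactly the bookkeeping that the paper leaves implicit (the paper simply states Proposition~\ref{k90} "from the above construction" with no explicit argument). Unwinding $\widehat B=B=\Psi\circ G\circ\Psi^{-1}$ on $\widehat{\Bbb C}\setminus\Delta$, using the symmetry $\Psi(\Delta)=\Delta$ to keep $\Psi^{-1}(z)$ outside $\Delta$, and appealing to $\Phi(\Delta)=D_r$ is precisely the intended verification.
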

Let $\Omega_{\widehat{B}}$ and $\Omega_{f}$ denote critical sets of $\widehat{B}$ and $f$, respectively.
Let $$P_{\widehat{B}} = \bigcup_{k\ge 1}^{\infty} \Omega_{\widehat{B}} \hbox{ and }  P_{f} = \bigcup_{k\ge 1}^{\infty} \Omega_{f}$$ denote the
post-critical sets  of $\widehat{B}$ and $f$, respectively.

\begin{lemma}\label{con-ext}
There is a constant $1 < K(d,
\theta)<\infty$ depending only on
$d$ and $\theta$ such that for any $0< r < 1$,  if  $P_{f} \cap D_{r} = \emptyset$,   then $\Gamma_{r}$ is a $K(d, \theta)$-quasi-circle.
\end{lemma}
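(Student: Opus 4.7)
The plan is to execute Shishikura's strategy: use the quasiconformal surgery already set up in the excerpt to transfer the question about $\Gamma_{r}$ to one about the Blaschke product $B\in{\mathbf B}_{\theta}^{m}$ produced by Lemma~\ref{Shi-con}, extract from Theorem B (through Corollary~\ref{BC}) a uniform quasiconformal bound, and then transport that bound back to build an explicit $K(d,\theta)$-quasiconformal homeomorphism of the sphere sending ${\Bbb T}$ onto $\Gamma_{r}$.

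First, apply Theorem B to $B$. Although $B$ need not be centered, its Douady--Earle barycenter $z_{B}\in\Delta$ is well defined; choose a M\"obius automorphism $g$ of $\Delta$ with $g(z_{B})=0$, so that $\tilde B:=g\circ B\circ g^{-1}$ is a centered member of ${\mathbf B}_{\theta}^{m}$. Corollary~\ref{BC} applied to $\tilde B$ provides a $K(d,\theta)$-quasiconformal extension of $h_{\tilde B}$ to $\overline{\Delta}$; because $g$ is conformal, this transports to a $K(d,\theta)$-quasiconformal extension $H_{B}:\overline{\Delta}\to\overline{\Delta}$ of $h_{B}$. This is the only place where Theorem B enters.

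Next, use $H_{B}$ to form the modified map $\widehat B$ of the excerpt, and assemble a quasiconformal map $\Xi:\widehat{\Bbb C}\to\widehat{\Bbb C}$ with $\Xi({\Bbb T})=\Gamma_{r}$ in two pieces. On $\overline{\Delta}$, set $\Xi(z)=\lambda(r\cdot H_{B}(z))$; this conjugates $\widehat B|_{\Delta}$ to $f|_{\overline{D_{r}}}$ by the defining property of $\lambda$, has dilatation equal to that of $H_{B}$, and sends ${\Bbb T}$ onto $\Gamma_{r}$ via the parameterization $w\mapsto\lambda(r\cdot h_{B}(w))$, which is $K(d,\theta)$-quasisymmetric. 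On $\widehat{\Bbb C}\setminus\overline{\Delta}$, I would extend $\Xi$ so as to conjugate $\widehat B=B$ to $f$: the existence of such a quasiconformal extension is guaranteed by Proposition~\ref{k90} together with the hypothesis $P_{f}\cap D_{r}=\emptyset$, which ensures that $f$ admits univalent inverse branches in a neighborhood of $\widehat{\Bbb C}\setminus\overline{D_{r}}$, so that the semi-conjugation becomes an honest conjugation. Concretely, the exterior $\Xi$ can be obtained by pulling the boundary parameterization back through iterated inverse branches of $f$ along the grand orbit of $\Gamma_{r}$, with dilatation at each step controlled by that of $H_{B}$; equivalently, one may produce $\Xi$ by solving the Beltrami equation for the $\widehat B$-invariant almost complex structure built from $H_{B}^{*}\mu_{0}$ on $\Delta$ and extended outward by holomorphic pull-back via $\widehat B$, which has $L^{\infty}$-norm bounded only in terms of $d$ and $\theta$.

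The hard part will be verifying that the exterior piece of $\Xi$ really is $K(d,\theta)$-quasiconformal uniformly in $r$. A priori the surgery map $\Phi$ carries dilatation depending on $r$ through the geometry of the interpolation annulus $A_{R}$, and $\Psi$ inherits this dependence via the $G$-invariant structure. The essential cancellation is that, once the boundary parameterization of $\Gamma_{r}$ is dictated by the uniformly quasisymmetric map $\lambda(r\cdot h_{B})$, all exterior Beltrami data can be rewritten so as to flow through the single $K(d,\theta)$-controlled datum $H_{B}$, and the straightening map thus produced descends, via the hypothesis on $P_{f}$, to the desired conjugation between $\widehat B$ and $f$ on the exterior. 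Checking that the two halves of $\Xi$ glue continuously across ${\Bbb T}$ and that no Beltrami inflation occurs in the pull-back process is the technical heart of the argument, and will rely on Proposition~\ref{k90} and the hypothesis $P_{f}\cap D_{r}=\emptyset$ in an essential way.
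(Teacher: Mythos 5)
Your overall strategy is the paper's: extract a uniform quasiconformal extension $H_{B}$ of $h_{B}$ from Theorem~B/Corollary~\ref{BC}, then build a quasiconformal map of $\widehat{\Bbb C}$ sending ${\Bbb T}$ to $\Gamma_{r}$ by pairing $\lambda_{r}\circ H_{B}$ on the inside with a dynamically defined exterior piece, using $P_{f}\cap D_{r}=\emptyset$ and Proposition~\ref{k90} to get univalent inverse branches. (Your preliminary centering step is already subsumed in Corollary~\ref{BC}, which is stated for all $B\in{\mathbf B}_{\theta}^{m}$.)

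But there is a genuine gap exactly where you flag ``the hard part,'' and you do not close it. You correctly observe that on the grand orbit of $\Delta$ the pull-back process only transports the dilatation of $H_{B}$, which is bounded by $K(d,\theta)$. What you do not address is the complementary set $\Sigma$ of points in the exterior whose forward $\widehat{B}$-orbit never enters $\Delta$. On $\Sigma$ your exterior map has to agree with $\Phi\circ\Psi^{-1}$, and a priori both $\Phi$ (through the interpolation annulus $A_{R}$) and $\Psi$ (through the $G$-invariant structure $\mu$) carry dilatation that blows up as $r\to 1$. The paper's argument works only because of Lemma~\ref{jj-j}: for almost every $z$ with $|z|>1$, if $\Psi^{-1}$ is not conformal at $z$ then some iterate $B^{k}(z)$ lands in $\Delta$. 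Combined with the fact that $\Phi$ is conformal outside $\overline{\Delta}$, this forces $\Phi\circ\Psi^{-1}$ to be conformal a.e.\ on $\Sigma$, so the limiting Beltrami coefficient vanishes there. Without this input, there is no ``essential cancellation''; the dilatation on $\Sigma$ is simply uncontrolled. You should also note the paper's reduction to the case $P_{f}\cap\overline{D_{r}}=\emptyset$ (needed so the inverse branches extend univalently to a neighborhood of $\overline{\Delta}$, making the pullback domains disjoint Jordan domains with real-analytic boundaries) and the limiting step: the intermediate maps $\chi_{k}$ are only uniformly $M(r)$-quasiconformal, and it is the a.e.\ limit of the Beltrami coefficients, not any single $\chi_{k}$, that has norm $\le (K(d,\theta)-1)/(K(d,\theta)+1)$.
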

\begin{proof}
It suffices to prove the lemma under the stronger assumption that $P_{f} \cap \overline{D_{r}} = \emptyset$. This is because $P_{f} \cap \overline{ D_{r'}} \subset P_{f} \cap D_{r} = \emptyset$ for all $0< r' < r$, and  by the same reasoning as in the proof of Lemma~\ref{inv}, one can show that $\Gamma_{r}$ must be  a $K(d, \theta)$-quasi-circle if $\Gamma_{r'}$ is a $K(d, \theta)$-quasi-circle for all $0< r' < r$.

 Let $\lambda_{r}: \Delta \to D_{r}$ be the holomorphic isomorphism
such that $$\lambda_{r}(1) =  \Phi \circ \Psi^{-1}(1)$$ and
$$
\lambda_{r}^{-1} \circ f \circ \lambda_{r}(z) = e^{2 \pi i \theta} z
$$
for all $z \in \Delta$. Define a quasiconformal homeomorphism
$\chi_{0}: \widehat{\Bbb C} \to \widehat{\Bbb C}$ by
$$
\chi_{0}(z) =
\begin{cases}
 \Phi \circ \Psi^{-1}(z) & \text{ for $|z| \ge 1$}, \\
 \lambda_{r} \circ H_{B}(z)& \text{ for $z \in \Delta$}.
\end{cases}
$$
Let $\mu_{0}$ denote the complex dilatation of $\chi_{0}$ and let
$$
M = \frac{1 + \|\mu_{0}\|_{\infty}}{1 + \|\mu_{0}\|_{\infty}}.
$$ Then  $\chi_{0}$ is an $M$-quasiconformal homeomorphism of
the sphere which maps the unit disk homeomorphically onto $D_{r}$.
Note that $M$ depends on $r$ and may go to infinity as $r \to 1$.

Now for every $k \ge 1$, we will define an $M$-quasiconformal  homeomorphism $\chi_{k}: \widehat{\Bbb C} \to
\widehat{\Bbb C}$ as follows.
Note that  $P_{f} \cap \overline{D_{r}}  = \emptyset$ by the assumption in the beginning of the proof. Since $\Phi\circ \Psi^{-1}$ is a bijection between $\Omega_{\widehat{B}}$ and $\Omega_{f}$,  from Proposition~\ref{k90} it follows that $P_{\widehat{B}}\cap \overline{\Delta} = \emptyset$  and thus  $$\Phi \circ \Psi^{-1}(P_{\widehat{B}}) = P_{f}.$$    So for every $k \ge 1$,  if an  inverse branch of $\widehat{B}^{k}$ maps $\Delta$ to some domain in the outside of the unit disk, then this inverse branch  is univalently defined in an open neighborhood of the closed unit disk. This implies that  each component of $\widehat{B}^{-k}(\Delta)$ is a Jordan domain with  boundary being real analytic,  and moreover, the closures of these Jordan domains are disjoint with each other.

Suppose  $\widehat{B}^{-k}(\Delta)$ has $l_{k}$ components with $l_{k} \ge 1$ being some integer.   Let $$U_{i}, \:\:1 \le i \le l_{k}$$ denote all the components of $\widehat{B}^{-k}(\Delta)$. By the construction of $\widehat{B}$, it follows that $$\Phi \circ \Psi^{-1} (U_{i}),\:\: 1 \le i \le l_{k}$$ are all the components of $f^{-k}(D_{r})$.  Let us first define $\chi_{k}$ on each $U_{i}$.

If $U_{i} = \Delta$, define $$\chi_{k}|\Delta = \lambda_{r} \circ H_{B}.$$
Otherwise, there is a least integer $1 \le k_{0} \le k$ such that  $\widehat{B}^{k_{0}} (U_{i}) = \Delta$. Since $P_{\widehat{B}} \cap \overline{\Delta} = P_{f} \cap \overline{D_{r}} = \emptyset$, the two maps
$$
\widehat{B}^{k_{0}}: U_{i} \to \Delta
$$
and
$$
f^{k_{0}}: \Phi \circ \Psi^{-1} (U_{i}) \to D_{r}
$$
are both holomorphic isomorphisms. So one can lift the quasiconformal homeomorphism $\lambda_{r} \circ H_{B}: \Delta \to D_{r}$ to a quasiconformal homeomorphism $\tau_{i}: U_{i} \to \Phi \circ \Psi^{-1} (U_{i})$ such that the following diagram commutes.
$$
     \begin{CD}
            U_{i}         @  > \tau_{i}   >  >\Phi \circ \Psi^{-1} (U_{i})         \\
           @V \widehat{B}^{k_{0}} VV                         @VV f^{k_{0}} V\\
            \Delta        @  > \lambda_{r} \circ H_{B} >  >         D_{r}
     \end{CD}
$$
In particular, the dilatation of $\tau_{i}$ on $U_{i}$ is equal to that of $\lambda_{r} \circ H_{B}$ on $\Delta$.

Since both $\partial U_{i}$ and $\Phi \circ \Psi^{-1} (\partial U_{i})$ are quasi-circles (in fact, both of them are real analytic curves), $\tau_{i}$ can be homeomorphically extended to $\partial U_{i}$.
Note that $\widehat{B}^{k_{0}}(\partial U_{i}) = \partial \Delta \subset \widehat{\Bbb C} \setminus \Delta$ and thus $\widehat{B}^{k}(\partial U_{i}) \subset \widehat{\Bbb C} \setminus \Delta$ for all $k \ge 0$, by Proposition~\ref{k90}, the following diagram commutes.
$$
     \begin{CD}
            \partial U_{i}         @  > \Phi \circ \Psi^{-1}    >  >\Phi \circ \Psi^{-1} (\partial U_{i})         \\
           @V \widehat{B}^{k_{0}} VV                         @VV f^{k_{0}} V\\
            \partial \Delta        @  > \Phi \circ \Psi^{-1}  >  >        \partial  D_{r}
     \end{CD}
$$
 Since $\Phi \circ \Psi^{-1}|\partial \Delta =
\lambda_{r} \circ H_{B}|\partial \Delta$, from the above two diagrams it follows that $\tau_{i}|\partial U_{i} = \Phi \circ \Psi^{-1}|\partial U_{i}$.
For each  such $U_{i}$, define $\chi_{k} = \tau_{i}$ on  $U_{i}$.

Finally let us define  $\chi_{k} = \Phi \circ \Psi^{-1}$ on the complement of $\widehat{B}^{-k}(\Delta)$.  Since all $\partial U_{i}$, $1 \le i \le l_{k}$,  are quasi-circles which are disjoint with each other,   $\chi_{k}$ is a quasiconformal homeomorphism of the sphere to itself. In this way we get a sequence of quasiconformal homeomorphisms $\chi_{k}: \widehat{\Bbb C} \to \widehat{\Bbb C}$, $k \ge 0$.   We claim
 \begin{itemize}
\item[1.] $\chi_{k}(\Delta) = D_{r}$,
\item[2.]  $\chi_{k}$ is an $M$-quasiconformal homeomorphism of the sphere to itself, and
 \item[3.]  the following diagram commutes. \begin{equation}\label{s-10}
     \begin{CD}
           \widehat{\Bbb C}       @  > \chi_{k+1}   >  > \widehat{\Bbb C}          \\
           @V \widehat{B} VV                         @VV f V\\
             \widehat{\Bbb C}        @  > \chi_{k}  >  >         \widehat{\Bbb C}
     \end{CD}
\end{equation}
\end{itemize}

Let us prove the claim now. The first assertion is obvious since by the construction of $\chi_{k}$,
$\chi_{k}|\Delta = \lambda_{r} \circ H_{B}$ for all $k \ge 0$. Again by the construction of $\chi_{k}$, the dilatation of $\chi_{k}$ on $\widehat{B}^{-k}(\Delta)$ is not greater than the dilatation of $\lambda_{r} \circ H_{B}$ on $\Delta$, and the dilatation of $\chi_{k}$ on $\widehat{\Bbb C} \setminus \widehat{B}^{-k}(\Delta)$ is not greater than the dilatation of $\Phi\circ \Psi^{-1}$ on $\widehat{\Bbb C} \setminus \widehat{B}^{-k}(\Delta)$. So for every $k \ge 1$, the dilatation of $\chi_{k}$ is not greater than the dilatation of $\chi_{0}$ which is  $M$-quasiconformal. The second assertion then follows.  By the construction of $\chi_{k}$ and $\chi_{k+1}$, the following diagram commutes.
$$
     \begin{CD}
           \widehat{B}^{-(k+1)}(\Delta)       @  > \chi_{k+1}   >  > f^{-(k+1)}(D_{r})        \\
           @V \widehat{B} VV                         @VV f V\\
             \widehat{B}^{-k}(\Delta)         @  > \chi_{k}  >  >        f^{-k}(D_{r})
     \end{CD}
$$
 By Proposition~\ref{k90} the following diagram commutes.
$$
     \begin{CD}
           \widehat{\Bbb C} \setminus \widehat{B}^{-(k+1)}(\Delta)       @  > \Phi\circ \Psi^{-1}   >  > \widehat{\Bbb C} \setminus f^{-(k+1)}(D_{r})        \\
           @V \widehat{B} VV                         @VV f V\\
            \widehat{\Bbb C} \setminus \widehat{B}^{-k}(\Delta)         @  > \Phi\circ \Psi^{-1} >  >        \widehat{\Bbb C} \setminus f^{-k}(D_{r})
     \end{CD}
$$ But  on $\widehat{\Bbb C} \setminus \widehat{B}^{-(k+1)}(\Delta)$, $\chi_{k+1} = \Phi\circ \Psi^{-1}$ and on
$\widehat{\Bbb C} \setminus \widehat{B}^{-k}(\Delta)$, $\chi_{k} = \Phi\circ \Psi^{-1}$. This, together with the above two diagrams, implies the third assertion. The claim has been proved.

Now for $k \ge 0$,  let $\mu_{k}$ denote the Beltrami coefficient of
$\chi_{k}$. It follows that
\begin{equation}\label{uni-cc}
\|\mu_{k}\|_{\infty} \le \frac{M-1}{M+1} \end{equation}
 holds for
all $k \ge 0$.

Now let $\nu$ denote the complex dilatation of $\lambda_{r} \circ
H_{B}$ which is defined in the inside of the unit disk. Since $\lambda_{r}$ is conformal in $\Delta = H_{B}(\Delta)$, it follows
that  $\nu$ is equal to the complex dilatation of $H_{B}$. So $\nu$ is $\widehat{B}$-invariant. Since $H_{B}$ is $K(d,
\theta)$-quasiconformal,  we have
$$\|\nu\|_{\infty} \le \frac{K(d, \theta) -1}{K(d, \theta) + 1}.$$

Now let $\Sigma \subset \widehat{\Bbb C}\setminus \Delta$ be the set consisting of all the points $z$ such that $\widehat{B}^{k}(z) \notin \Delta$ for all $k \ge 1$. By Lemma~\ref{jj-j}, it follows
that for almost every $z \in \Sigma$,   $\Psi^{-1}$ is conformal at $z$. Since $\Psi^{-1}(\widehat{\Bbb C} \setminus \Delta) = \widehat{\Bbb C}\setminus \Delta$, and since $\Phi$ is conformal in the outside of the unit disk, it follows that  $\Phi\circ \Psi^{-1}$ is conformal at almost every $z\in \Sigma$.  Now let $$\Xi = \bigcup_{l=0}^{\infty} \widehat{B}^{-l}(\partial \Delta).$$  Then $\Xi$ is the union of countably many real analytic curves and thus is a zero measure set. It is easy to see that for every $z \in \Sigma \setminus \Xi$ and every $k \ge 0$, there is an open neighborhood of such $z$, say $B_{z}(r)$, such that $B_{z}(r) \cap \widehat{B}^{-k}(\Delta) = \emptyset$. By the construction of $\chi_{k}$, it follows that $$\chi_{k}|B_{z}(r) = \Phi\circ \Psi^{-1}|B_{z}(r).$$
This implies that the complex dilatation of $\chi_{k}$ is equal to that of $\Phi\circ \Psi^{-1}$ at $z$. In particular, this implies that for almost every $z \in \Sigma$, $\mu_{k}(z) = 0$ for all $k \ge 0$.

Now suppose $z \in \Sigma$. Then there is
some integer $N \ge 1$ such that $$\widehat{B}^{N}(z) \in \Delta.$$ By the construction of the maps $\{\chi_{k}\}$, $\mu_{N}(z)$ is the pull back of
$\nu(\widehat{B}^{N}(z))$ by $\widehat{B}^{N}$, and $\mu_{k}(z) =
\mu_{N}(z)$ for all $k > N$. Since $\widehat{B}$ is holomorphic  in the outside of the unit disk, we thus have  for all $k \ge N$,
$$
|\mu_{k}(z)| = |\mu_{N}(z)| = |\nu(\widehat{B}^{N}(z))| \le \frac{K(d, \theta) -1}{K(d, \theta)
+ 1}.
$$

Now let us define a Beltrami coefficient $\mu(z)$ on the whole
Riemann sphere by setting
$$
\mu(z) = 0
$$
if $z \in \Sigma$ and
$$
\mu(z) = \mu_{N}(z)
$$
if $\widehat{B}^{N}(z) \in \Delta$ for some $N \ge 0$. It follows
that $$\|\mu\|_{\infty} \le \frac{K(d, \theta) -1}{K(d, \theta) +
1}$$ and $\mu_{k}(z) \to \mu(z)$ for almost every $z \in
\widehat{\Bbb C}$. Now from (\ref{uni-cc}) and the fact that $\chi_{k}|\Delta = \lambda_{r} \circ H_{B}$ for all $k \ge 0$, it follows  that there is a
$K(d, \theta)$-quasiconformal homeomorphism $\chi: \widehat{\Bbb C}
\to \widehat{\Bbb C}$ such that  $\chi_{k}$ converges uniformly to $\chi$ with
respect to the spherical metric.  In particular, we have
$$
\chi|\Delta = \chi_{k}|\Delta = \lambda_{r} \circ H_{B}
$$
for all $k \ge 0$. Note that the quasiconformal homeomorphism $\lambda_{r}\circ H_{B}: \Delta \to D_{r}$ can be homeomorphically extended to $\partial \Delta$ such that $(\lambda_{r} \circ H_{B})(\partial \Delta) = \Gamma_{r}$.
Since $\chi:\widehat{\Bbb C} \to \widehat{\Bbb C}$ is a $K(d, \theta)$-quasiconformal homeomorphism, it follows  that
$\Gamma_{r} = \chi(\partial \Delta)$ is a $K(d, \theta)$-quasi-circle. This completes the
proof  of Lemma~\ref{con-ext}.

\end{proof}

To remove the condition that $P_{f} \cap D_{r} =\emptyset$ in
Lemma~\ref{con-ext}, we need the following lemma.
\begin{lemma}[Lemma 9.8 of \cite{Po}]\label{p}
For any $C > 0$, there is a $1 < K(C) <\infty$ depending only on $C$
such that for any simple closed curve $\gamma \subset \widehat{\Bbb
C}$ if
\begin{equation}\label{cd-e}
\bigg{|}\frac{(w_{1}-w_{3})(w_{2}-w_{4})}{(w_{1}-w_{4})(w_{2}-w_{3})}\bigg{|}
> C
\end{equation}
holds for any four points $\{w_{1}, w_{2}, w_{3}, w_{4}\}$ in
$\gamma$ which are listed according to anticlockwise order, then
$\gamma$ is a $K(C)$-quasi-circle. The converse is also true. That
is, for any $1 < K < \infty$, there exists a $C(K) > 0$ depending
only on $K$ such that if $\gamma\subset \widehat{\Bbb C}$ is a
$K$-quasi-circle, then for any four points $\{w_{1}, w_{2}, w_{3},
w_{4}\}$ in $\gamma$ which are listed according to anticlockwise
order, (\ref{cd-e}) holds with the constant in the right hand
replaced by $C(K)$.
\end{lemma}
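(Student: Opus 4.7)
Plan. I would prove the lemma by linking the four-point cross-ratio condition (\ref{cd-e}) to Ahlfors' classical three-point characterization of quasi-circles: a Jordan curve $\gamma\subset\widehat{\Bbb C}$ is a $K$-quasi-circle if and only if there exists $M\geq 1$ such that for any two points $a,c\in\gamma$ the smaller-diameter subarc $\gamma[a,c]\subset\gamma$ satisfies $\operatorname{diam}\gamma[a,c]\leq M|a-c|$, with $K$ and $M$ depending quantitatively only on each other. Both this condition and (\ref{cd-e}) are M\"{o}bius-invariant, which affords the normalizations used below.

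Main direction (cross-ratio implies quasi-circle). I argue contrapositively: if the three-point condition fails with arbitrarily large constant $M$, then (\ref{cd-e}) must fail for the given $C$. Suppose $a,c\in\gamma$ and $b\in\gamma[a,c]$ satisfy $|a-b|>M|a-c|$. After a M\"{o}bius normalization placing $a=0$ and $c=1$ we have $|b|\geq M-1$. Since $\gamma[a,c]$ is the arc of smaller diameter, the complementary arc $\gamma'$ of $\gamma$ also has diameter at least $M-1$; a triangle-inequality pigeonhole applied to a pair of points realizing $\operatorname{diam}\gamma'$ then produces a point $d\in\gamma'$ with $|d|\geq (M-1)/2$. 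The cyclic order of the four points is $a,b,c,d$, and
$$
\bigg{|}\frac{(a-c)(b-d)}{(a-d)(b-c)}\bigg{|}=\frac{|b-d|}{|d|\,|b-1|}\leq\frac{|b|+|d|}{|d|\,(|b|-1)}\lesssim\frac{1}{M},
$$
which falls below $C$ once $M$ is chosen large in terms of $C$. This contradicts (\ref{cd-e}) and establishes the three-point condition with some $M=M(C)$; Ahlfors' theorem then supplies a quasi-circle constant $K(C)$ depending only on $C$.

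Converse direction (quasi-circle implies cross-ratio). If $\gamma=\phi(\Bbb T)$ with $\phi$ a $K$-quasiconformal self-homeomorphism of $\widehat{\Bbb C}$, write $w_j=\phi(z_j)$ with $z_j=e^{i\theta_j}$ in anticlockwise order on $\Bbb T$. Using $|z_j-z_k|=2|\sin((\theta_j-\theta_k)/2)|$ together with the trigonometric identity
$$
\sin\tfrac{\theta_3-\theta_1}{2}\sin\tfrac{\theta_4-\theta_2}{2}-\sin\tfrac{\theta_4-\theta_1}{2}\sin\tfrac{\theta_3-\theta_2}{2}=\sin\tfrac{\theta_2-\theta_1}{2}\sin\tfrac{\theta_4-\theta_3}{2}\geq 0,
$$
one sees that the absolute cross-ratio of $z_1,\ldots,z_4$ is at least $1$. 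Invoking the quasi-M\"{o}bius property of $K$-quasiconformal self-maps of the sphere, there is a homeomorphism $\eta_K:[0,\infty]\to[0,\infty]$ fixing $0$ and $\infty$ and depending only on $K$, such that the absolute cross-ratio of $w_1,\ldots,w_4$ is at least $\eta_K^{-1}(1)=:C(K)>0$, as required.

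Main obstacle. The delicate step lies in the main direction: the auxiliary point $d$ has to be Euclidean-distant from \emph{both} $a$ and $c$ (not merely one of them), and the rough triangle-inequality upper bound on $|b-d|$ must still leave the cross-ratio small. The chief input is the pigeonhole observation that if $d_1,d_2\in\gamma'$ realize $\operatorname{diam}\gamma'$, then $|d_1-a|+|d_2-a|\geq|d_1-d_2|\geq M-1$, so at least one of $d_1,d_2$ lies at distance $\geq(M-1)/2$ from $a=0$ and, by the triangle inequality, at distance $\geq(M-3)/2$ from $c=1$. Once the three-point characterization of quasi-circles is in hand, everything else is routine.
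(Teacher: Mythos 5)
The paper does not actually prove this lemma; it cites it as Lemma~9.8 of Pommerenke's \emph{Univalent Functions} and moves on. So there is no in-text argument to compare against, and I can only assess your proof on its own terms: it is essentially correct, and the route you chose is a standard and valid one.

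For the forward direction you invoke Ahlfors' three-point (bounded turning) characterization and argue by contradiction, producing a quadruple with small cross-ratio whenever the bounded-turning constant is large. The pigeonhole step locating $d$ on the complementary arc with $|d|$ and $|d-c|$ both comparable to $M$ is the right move, and the resulting estimate $\lesssim 1/M$ for the cross-ratio does the job. The constants are tracked a bit loosely --- after normalizing $a=0$, $c=1$ the hypothesis gives $|b|>M$, not merely $|b|\geq M-1$, and the factor ``$(M-1)/2$'' for $|d|$ should really be obtained from the diameter of $\gamma'$ rather than from $|a-b|$ directly --- but these slips only affect inessential numerical constants, not the logic. You correctly note that $K$-quasi-circularity and condition~(\ref{cd-e}) are both M\"{o}bius-invariant, which justifies working with a bounded normalization.

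For the converse your computation that the absolute cross-ratio of four ordered points on $\Bbb T$ is always $\geq 1$, via the sine product identity, is correct. Citing the quasi-M\"{o}bius property of $K$-quasiconformal self-maps of $\widehat{\Bbb C}$ (V\"{a}is\"{a}l\"{a}) then gives $C(K)=\eta_K^{-1}(1)>0$ cleanly. This is a genuinely different route from what one expects in Pommerenke (whose book predates the quasi-M\"{o}bius machinery): a proof closer to the paper's own toolkit would relate the absolute cross-ratio to the modulus $\Lambda(T)$ of the two-slit annulus as in Lemma~\ref{Tech} and Lemma~\ref{length-modulus}, and then use the quasi-invariance of moduli under $K$-QC maps. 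Your approach trades that explicit modulus computation for a citation of a heavier black box; either is legitimate. Overall the proposal is sound and the only improvements I would suggest are tightening the stray constants in the forward direction.
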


Let $R_{\theta}^{d}$ denote the set of all the degree $d$ rational
maps which have a fixed Siegel disk centered at the origin and with
rotation number $\theta$.

\begin{lemma} \label{post-in}
 There is a  $0< C< \infty$ depending only on $d$ and $\theta$ such that
 for any $f \in R_{\theta}^{d}$,  any $0< r < 1$,  any
four distinct integers $k, l, m $ and $n$, and any $z \in
\Gamma_{r}$, if $f^{k}(z), f^{l}(z), f^{m}(z)$ and $f^{n}(z)$ are
ordered anticlockwise in $\Gamma_{r}$, then
$$
\bigg{|}\frac{(f^{k}(z)-f^{m}(z))(f^{l}(z)-f^{n}(z))}
{(f^{k}(z)-f^{n}(z))(f^{l}(z)-f^{m}(z))}\bigg{|}
> C.
$$
\end{lemma}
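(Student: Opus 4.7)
The plan is to reduce Lemma~\ref{post-in} to Lemma~\ref{con-ext} by means of a holomorphic motion argument. First, since $\theta$ is irrational, every orbit of the rigid rotation $R_\theta$ is dense in the circle of radius $r$, and correspondingly $\{f^j(z)\}_{j \in \Bbb Z}$ is dense in $\Gamma_r$. By continuity of the cross-ratio together with the converse direction of Lemma~\ref{p}, a uniform lower bound for cross-ratios of orbit four-tuples on $\Gamma_r$ is equivalent to asserting that $\Gamma_r$ is a $K(d,\theta)$-quasi-circle. Thus Lemma~\ref{post-in} is essentially equivalent (modulo density/continuity) to the uniform quasi-circle property of $\Gamma_r$ for all $f \in R_\theta^d$ and all $r \in (0,1)$.

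Second, given $f \in R_\theta^d$ with $P_f \cap D_r \ne \emptyset$, I would construct a complex one-parameter family $\{f_\sigma\}_{\sigma \in U}$ of maps in $R_\theta^d$ depending holomorphically on $\sigma$ in a parameter domain $U \ni 0$, with $f_0 = f$, such that for some $\sigma_1 \in U$ the post-critical set $P_{f_{\sigma_1}}$ is disjoint from $\overline{D_r^{f_{\sigma_1}}}$. Such a family can in principle be obtained by a holomorphic deformation of the Blaschke-product model $B_f$ of $f$ within $\mathbf{B}_\theta^m$, transported back to $R_\theta^d$ by Shishikura's surgery; along the deformation one must preserve the multiplier $e^{2\pi i \theta}$ at $0$, so that each $f_\sigma$ is in $R_\theta^d$ by Brjuno's theorem (bounded type $\theta$ is Brjuno, so a Siegel disk persists whenever the multiplier is preserved). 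With the family in hand, for a fixed $\zeta \in r\Bbb T$ the four orbit points $z_\sigma^{(i)} := \lambda_\sigma(e^{2\pi i \theta k_i}\zeta) = f_\sigma^{k_i}(\lambda_\sigma(\zeta))$ (with $k_i$ running over $\{k,l,m,n\}$ and $\lambda_\sigma$ the linearization of $f_\sigma$) depend holomorphically on $\sigma$. By the Ma\~{n}\'{e}--Sad--Sullivan $\lambda$-lemma (or its Slodkowski strengthening), this motion of four points extends to a quasi-conformal motion $\Omega_\sigma: \widehat{\Bbb C} \to \widehat{\Bbb C}$ with dilatation at most $(1+|\sigma|)/(1-|\sigma|)$. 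At $\sigma = \sigma_1$, Lemma~\ref{con-ext} yields that $\Gamma_r^{f_{\sigma_1}}$ is a $K(d,\theta)$-quasi-circle, hence by Lemma~\ref{p} the cross-ratio of $\{z_{\sigma_1}^{(i)}\}_{i=1}^4$ is bounded below by a constant $C_0 = C_0(K(d,\theta))$. Transferring this bound back to $\sigma = 0$ through the bounded quasi-conformal distortion of $\Omega_\sigma$ produces the desired uniform cross-ratio lower bound for the original orbit points of $f$, with constant depending only on $d$, $\theta$, and $|\sigma_1|$.

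The main technical obstacle lies in the second step: constructing the holomorphic family $\{f_\sigma\}$ that stays inside $R_\theta^d$ (in particular preserves the multiplier $e^{2 \pi i \theta}$ at $0$, so that Brjuno supplies a Siegel disk for every parameter value) while simultaneously displacing all post-critical orbits off $\overline{D_r^{f_\sigma}}$ at the target parameter $\sigma_1$. Since any quasi-conformal conjugation that is conformal on the Siegel disk preserves the post-critical structure inside $D$, such a deformation cannot be produced by pure conjugation; one must exploit the genuine flexibility of the Blaschke-product model within $\mathbf{B}_\theta^m$ (or an analogous surgery permitting non-trivial deformation of the dynamics inside $D$) in order to move critical orbits radially off a prescribed annulus inside $D$. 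Executing this surgery-plus-deformation carefully, while keeping control of the rotation number, is where the real work of the holomorphic motion trick lies.
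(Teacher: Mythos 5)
Your high-level idea — use a holomorphic family to deform $f$ into a map whose post-critical set avoids $D_r$, then transfer the cross-ratio bound via Lemma~\ref{con-ext} — is the right direction, and it is indeed the skeleton of the paper's proof. But there is a genuine quantitative gap in the transfer step that the paper closes in a different way, and this gap is fatal as your argument stands.

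You propose to push the bound from $\sigma = \sigma_1$ back to $\sigma = 0$ through the quasiconformal distortion of a holomorphic motion, obtaining a constant that depends on $|\sigma_1|$. But nothing in your construction bounds $|\sigma_1|$ away from $1$ uniformly over all $f \in R_\theta^d$ and all $r \in (0,1)$: as post-critical points accumulate on $\partial D$, the parameter value at which they clear $\overline{D_r}$ can escape to the boundary of the parameter disk, and the $(1+|\sigma_1|)/(1-|\sigma_1|)$ distortion bound degenerates. So this line of reasoning does not produce a constant depending only on $d$ and $\theta$. The paper avoids the issue entirely by \emph{not} trying to control the distortion at all. Instead it first runs a compactness/normality argument: take a minimizing sequence $f_i$ for $V_f(r;k,l,m,n)$ in $R_\theta^d$, normalize so every $\Gamma_r^i$ passes through $1$, extract a limit rational map $g$ of degree $\le d$, and handle the lower-degree cases by induction. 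In the critical case ($\deg g = d$), $g$ realizes the infimum. Then the holomorphic family $g_t$, $t \in \Delta$, is built (by the explicit surgery with $\Phi_i$ and $h_t$, each $g_t$ agreeing with $g$ on $D^g$), the point $z_0$ realizing the infimum is tracked holomorphically by $\phi_t$, and the function $t \mapsto C_{g_t;k,l,m,n}(\phi_t(z_0))$ is holomorphic and non-vanishing on $\Delta$. Because $g$ minimizes $V_\cdot(r;k,l,m,n)$ over $R_\theta^d$, the modulus of this function attains an \emph{interior minimum} at $t=0$; the minimum modulus principle forces it to be constant. The value at $t_0$ is then at least $V_{g_{t_0}}(r;k,l,m,n)$, which is $> C$ by Lemma~\ref{con-ext} since $P_{g_{t_0}}$ misses $D_r^{g_{t_0}}$ — and equals the value at $t=0$ with \emph{no distortion loss whatsoever}. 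This rigidity step is what you are missing; the $\lambda$-lemma and dilatation bookkeeping you invoke cannot substitute for it.

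Two smaller points. First, you acknowledge that the construction of the family $\{f_\sigma\}$ is a gap; the paper's construction does not go back and forth through the Blaschke model. It directly modifies $g$ by post-composition with compactly supported quasiconformal "pushes" $\Phi_i(\cdot,t)$ in the preimage components $V_i'$, producing a quasiregular $h_t$ that agrees with $g$ on $D^g$ and has analytically varying Beltrami coefficient; straightening gives $g_t \in R_\theta^d$ depending holomorphically on $t$. Keeping $h_t = g$ on $D^g$ is what guarantees the multiplier $e^{2\pi i\theta}$ is preserved and the Siegel disk persists — no appeal to Brjuno's theorem is needed or made. Second, your first paragraph (reducing the lemma to the quasi-circle property of $\Gamma_r$ via density and Lemma~\ref{p}) is fine as motivation, but it is used in the paper the other way around: Lemma~\ref{post-in} together with Lemma~\ref{p} proves each $\Gamma_r$ is a uniform quasi-circle, which then feeds Lemma~\ref{inv}. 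You should not use that equivalence as a step in proving Lemma~\ref{post-in} itself, or the argument becomes circular.
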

\begin{proof}

Let  $f \in R_{\theta}^{d}$ and $D$ be the Siegel disk of $f$ centered at the origin. Let $w \in \widehat{\Bbb C} \setminus D$. By considering the rational map $\frac{f(z)}{f(z)-w}$,  we may assume that $\infty \notin D$. For $0 < r < 1$,
let
$$
V_{f}(r;k,l,m,n) = \inf_{z\in\Gamma_{r}}\bigg{|}
\frac{(f^{k}(z)-f^{m}(z))(f^{l}(z)-f^{n}(z))}
{(f^{k}(z)-f^{n}(z))(f^{l}(z)-f^{m}(z))} \bigg{|}
$$

Note that as $z \to 0$, the function
$$ C_{f;k,l,m,n}(z) =  \frac{(f^{k}(z)-f^{m}(z))(f^{l}(z)-f^{n}(z))}
{(f^{k}(z)-f^{n}(z))(f^{l}(z)-f^{m}(z))}
$$ has a non-zero limit.  We can thus regard $C_{f;k,l,m,n}$ as a holomorphic function in
$D$ which does not vanish. In particular, we have
$$V_{f}(r_{1};k,l,m,n) \ge V_{f}(r_{2};k,l,m,n)$$ for all $0 \le
r_{1} < r_{2} < 1$. It is important to note that $V_{f}(r;k,l,m,n)$
is preserved by an M\"{o}bius conjugation.

Let $\{f_{i}\}$ be a sequence in $R_{\theta}^{d}$ such that
$$
\lim_{i \to \infty} V_{f_{i}}(r;k,l,m,n) = \inf_{f \in
R_{\theta}^{d}}\{V_{f}(r;k,l,m,n)\}.
$$
Let $D_{i}$ denote the Siegel disk of $f_{i}$ centered at the
origin.  Let $\Gamma_{r}^{i}$ denote the $\Gamma_{r}$ of $D_{i}$. For each $i$, take $a_{i} \in \Gamma_{r}^{i}$. By
considering the sequence of rational maps $\frac{1}{a_{i}} f_{i} (a_{i}z)$ if necessary,   we may assume that
every  $\Gamma_{r}^{i}$   passes through the point $1$.

For each $i$, let $\phi_{i}: \Delta \to D_{i}$ be the linearization map such that
$\phi_{i}'(0) > 0$. Since every $\Gamma_{r}^{i}$ passes through $1$, it
follows that $\phi_{i}'(0)$ is bounded away from $0$ and the
infinity. By Koebe's $1/4$-Theorem, $D_{i} = \phi_{i}(\Delta)$
contains a Euclidean disk $B_{0}(\tau)$ for some $\tau > 0$. Since $f_{i}(0) = 0$ and $f_{i}'(0) = e^{2 \pi i \theta}$, it follows that the sequence  $\{f_{i}\}$ is normal in $B_{0}(\tau)$. By taking a convergent subsequence, we may assume that $f_{i}$ converges to a univalent function $g$ in $B_{0}(\tau)$.  We claim  that $g$ is the restriction of some rational map  to $B_{0}(\tau)$ whose degree is not more than $d$.  Let us prove the claim. To this end, let us write
$$
f_{i}(z) = c_{i} z \frac{\prod_{k} (z - p^{i}_{k})} {\prod_{j}(z - q^{i}_{j})}
$$
where $c_{i} \ne 0$ and all the $p_{k}^{i}$ and $q_{j}^{i}$ do not belong to $B_{0}(\tau)$.  By taking a subsequence we may assume that as $ i \to \infty$, each of the $p_{k}^{i}$ and $q_{j}^{i}$ either converges to the infinity or converges to some complex number in the outside of $B_{0}(\tau)$. Let us denote this as $p_{k'}^{i} \to \infty$, $q_{j'}^{i} \to \infty$, $p_{k''}^{i} \to p_{k''}$, and $q_{j''}^{i} \to q_{j''}$ where the $p_{k''}$ and $q_{j''}$ are complex numbers in the outside of $B_{0}(\tau)$.
Since when restricted to $B_{0}(\tau)$ $f_{i}$ converges to $g$ and $\frac{\prod_{k''} (z - p_{k''}^{i})}{\prod_{j''}(z - q_{j''}^{i})}$ converges to $ \frac{\prod_{k''} (z - p_{k''})} {\prod_{j''}(z - q_{j''})}$, it follows that as $i \to \infty$,
$$
c_{i} \cdot \frac{\prod_{k'}p_{k'}^{i}}{\prod_{j'}q_{j'}^{i}} \to \alpha
$$
where $\alpha$ is some nonzero complex number. This implies that in $B_{0}(\tau)$, the univalent function $g$ is identified with the following rational function whose degree is clearly not more than $d$,
$$
\alpha z \frac{\prod_{k''} (z - p_{k''})} {\prod_{j''}(z - q_{j''})}.
$$
The claim has been proved. In the following let us still use $g$ to denote this rational function.

By taking a convergent subsequence if necessary, we may
assume that $\phi_{i}\to \phi$  uniformly  in any compact subset of
the unit disk where $\phi$ is some univalent  function defined in the unit
disk.  In particular, in a small neighborhood of the origin, $g(z) =
(\phi\circ R_{\theta}\circ \phi^{-1})(z)$ where $R_{\theta}$ is the
rigid rotation given by $\theta$.  Since $g$ is a rational map, it follows that
\begin{equation}\label{lse}
g(z) =
(\phi\circ R_{\theta}\circ \phi^{-1})(z) \hbox{ for all } z \in \phi(\Delta).
\end{equation} Since  $\phi_{i}\to \phi$  uniformly  in any compact subset of
the unit disk, $f_{i}$ converges uniformly to $g$ in any compact subset of $\phi(\Delta)$. There are three cases.

In the first case,  $g$ is a M\"{o}bius map. Since $g(0) = 0$ and $g'(0) = e^{2 \pi i \theta}$, it follows that $g$ has two distinct fixed points $\{0, p\}$, and moreover, $\widehat{\Bbb C} - \{0, p\}$ is foliated by $g$-invariant Euclidean circles.  Since $\phi_{i} \to
\phi$ uniformly in any compact subset of the unit disk,  it follows
 that $\Gamma_{r}^{i}$ converges to a Euclidean circle $\Gamma$
which is preserved by $g$, and moreover, $f_{i}$ uniformly converges
to $g$ in an open neighborhood of $\Gamma$. Since $g$ is conjugate
to the rigid rotation $R_{\theta}$ through a M\"{o}bius map, we thus
have
$$
\lim_{i \to \infty} V_{f_{i}}(r;k,l,m,n) =
V_{R_{\theta}}(r;k,l,m,n).
$$
 The Lemma in this case then follows from
Lemma~\ref{p} and the fact that the Euclidean circle is a
quasi-circle.

In the second case,  $g \in R_{\theta}^{d'}$ for some $2 \le d' <
d$.  Let $D^{g}$ denote the Siegel disk of $g$ centered at the origin.  By (\ref{lse}), it follows that $D^{g}$ always contains $\phi(\Delta)$ and may be strictly larger than $\phi(\Delta)$. Again since $\phi_{i} \to \phi$ uniformly in any compact subset
of the unit disk, it follows that  $\Gamma_{r}^{i}$ converges to the
$\Gamma_{r'}$ of $D^{g}$ for
some $0 < r' \le r$,
 and moreover, $f_{i}$ uniformly converges
to $g$ in an open neighborhood of $\Gamma_{r'}$. This implies that
\begin{equation}\label{pp-ii}
\lim_{i \to \infty} V_{f_{i}}(r;k,l,m,n) =  V_{g}(r';k,l,m,n) \ge
V_{g}(r;k,l,m,n).
\end{equation}
Since $g$ is a rational map with degree less than $d$, by induction
on the degree of the rational map we have a constant $0< C< \infty$
depending only on $d$ and $\theta$ such that $$V_{g}(r;k,l,m,n) >
C.$$ Thus the Lemma also follows in this case.

In the third case, $g \in R_{\theta}^{d}$. Then we still have
(\ref{pp-ii}). Thus  we get
\begin{equation}\label{cont-rr}
V_{g}(r;k,l,m,n) = \inf_{f \in R_{\theta}^{d}}\{V_{f}(r;k,l,m,n)\}.
\end{equation}
Recall that  $D^{g}$ denotes the Siegel disk of $g$ centered at the origin. By a M\"{o}bius conjugation which preserves $0$, we may assume that $\infty \notin D^{g}$ and $g(\infty) \ne \infty$. Let
 $\Gamma_{r}^{g}$  and $D_{r}^{g}$  denote the $\Gamma_{r}$ and
the $D_{r}$ of $D^{g}$ respectively. If $P_{g}\cap D_{r}^{g} =
\emptyset$, then $\Gamma_{r}^{g}$ is a $K(d, \theta)$-quasi-circle
by Lemma~\ref{con-ext}. The Lemma in this case then follows from
Lemma~\ref{p}.  Now suppose
$$
P_{g} \cap D_{r}^{g} \ne \emptyset.
$$
Let $V_{1}, \cdots, V_{N}$ denote all the components of
$g^{-1}(D^{g}_{r})$ in the outside of $D^{g}$ such that
$$
V_{i} \cap (\Omega_{g} \cup P_{g}) \ne
\emptyset, \quad i=1, \cdots, N.
$$
For each $1 \le i \le N$,  let
$$
g(V_{i} \cap (\Omega_{g} \cup P_{g})) = \{x_{1}, \cdots, x_{k_{i}}\}
$$
where $k_{i} \ge 1$ is some integer. For each $i$, take $k_{i}$
distinct  points in $\Gamma_{r}^{g}$, say $z_{1}^{i}, \cdots,
z_{k_{i}}^{i}$.

Now take an $r'$ such that $r< r' < 1$. For each $1 \le i \le N$,  take $k_{i}$ disjoint Jordan domains with smooth
boundaries, say $U_{1}^{i}, \cdots, U_{k_{i}}^{i}$ such that
$\overline{U_{j}^{i}} \subset D^{g}_{r'}$ and $\{x_{j}^{i},
z_{j}^{i}\} \subset U_{j}^{i}$ for all $1 \le j \le k_{i}$,
 and most
importantly,
$$
d_{U_{j}^{i}}(x_{j}^{i}, z_{j}^{i}) \equiv C_{0}
$$
holds for all $1 \le i \le N$ and  $1 \le j \le k_{i}$, where $d_{U_{j}^{i}}(\cdot, \cdot)$ denotes the distance with respect to the hyperbolic metric in $U_{j}^{i}$. In fact,
when the domain becomes thinner, the hyperbolic distance between the
two points will become bigger. So it is easy to make all
$d_{U_{l}^{i}}(x_{j}^{i}, z_{j}^{i})$ taking the same large value by
making all the domains $U_{j}^{i}$ thin enough. It follows that
there is a $t_{0} \in \Delta$ such that for each $U_{j}^{i}$, there
is a Riemann isomorphism
$$
\psi_{j}^{i}: \Delta \to U_{j}^{i}
$$
 such that $\psi_{j}^{i}(0) = x_{j}^{i}$ and
$\psi_{j}^{i}(t_{0}) = z_{j}^{i}$. Let $\phi_{j}^{i}$ denote the
inverse of $\psi_{j}^{i}$.  For each $1 \le i \le N$, define
$$
\Phi_{i}(\cdot, \cdot): D^{g}_{r'} \times \Delta  \to D^{g}_{r'}
$$ as follows
$$
\Phi_{i}(z, t) =
\begin{cases}
 z & \text{ if $z \in D^{g}_{r'} \setminus  \bigcup_{1 \le j \le k_{i}} U_{j}^{i}$}, \\
 \psi_{j}^{i}\big{(}\phi_{j}^{i}(z) + (1-|\phi_{j}^{i}(z)|)t\big{)} & \text{ if $z \in U_{j}^{i}$  for some $1 \le j \le k_{i}$}.
\end{cases}
$$
By a direct calculation, we have
\begin{equation}\label{ppdd}
\frac{ (\Phi_{i})_{\bar{z}}}{ (\Phi_{i})_{z} }(z, t) =
\begin{cases}
 0 & \text{ if $z \in D^{g}_{r'} \setminus  \bigcup_{1 \le j \le k_{i}} U_{j}^{i}$}, \\

  \frac{\overline{(\phi_{j}^{i})'(z)}}{(\phi_{j}^{i})'(z)} \frac{t \phi_{j}^{i}(z)}{t\overline{{\phi_{j}^{i}(z)}} - 2|\phi_{j}^{i}(z)|} & \text{ if $z \in U_{j}^{i}$  for some $1 \le j \le k_{i}$}.
\end{cases}
\end{equation}
This implies that for almost every $z$ in $D^{g}_{r'}$, the complex dilatation of $\Phi_{i}$ depends analytically on $t$ when $t$ varies in $\Delta$.  For each $1 \le i \le N$, let $V_{i}'$ be the component of $g^{-1}(D^{g}_{r'})$ which contains $V_{i}$. Since $V_{i}$ is in the outside of $D^{g}$, we have $V_{i}' \cap D^{g} = \emptyset$ for all $1 \le i \le N$.  For each $t \in \Delta$, define
\begin{equation}\label{jj-k}
h_{t}(z) =
\begin{cases}
g(z) & \text{ if $z \in \widehat{\Bbb C} \setminus  \bigcup_{1 \le i \le N} V_{i}'$}, \\
\Phi_{i}(g(z), t) & \text{ if $z \in V_{i}'$ for some $1 \le i \le N$}.
\end{cases}
\end{equation}
It follows that $h_{t}:  \widehat{\Bbb C} \to \widehat{\Bbb C}$ is a branched covering map of degree $d$.
Let
$$
\Omega = \bigcup_{1 \le i \le N} V_{i}' \cap \big{(} \bigcup _{1 \le j \le k_{i}} g^{-1}(U_{j}^{i})\big{)}
$$
Since all the $\partial U_{j}^{i}$ are smooth Jordan curves, $\partial \Omega$ is the union of finitely many quasi-circles. For each $t \in \Delta$, from (\ref{ppdd}) and (\ref{jj-k}) we can easily get
$$
\frac{ (h_{t})_{\bar{z}} } {(h_{t})_{{z}} }(z) =
\begin{cases}
 0 & \text{ if $z \in \widehat{\Bbb C} \setminus  \Omega$}, \\

  \frac{\overline{(\phi_{j}^{i})'(g(z))}}{(\phi_{j}^{i})'(g(z))} \frac{t\phi_{j}^{i}(g(z))}{t\overline{\phi_{j}^{i}(g(z))} - 2|\phi_{j}^{i}(g(z))|}\frac{\overline{g'(z)}}{g'(z)} & \text{ if $z \in \Omega$}.
\end{cases}
$$
This implies that for every $t \in \Delta$, the map $h_{t}: \widehat{\Bbb C} \to \widehat{\Bbb C}$ is a quasi-regular branched covering map of degree $d$,  and moreover, for almost every $z$, the complex dilatation of $h_{t}$ at $z$ depends analytically on $t$ when $t$ varies in $\Delta$.

By the construction of $h_{t}$, it follows that for each $t \in \Delta$, $h_{t}|D^{g} = g|D^{g}$ is conformal in $D^{g}$,  and moreover, for almost every $z \in \widehat{\Bbb C}$, if $h_{t}$ is quasiconformal at some point $z$, then $h_{t}(z) \in D^{g}$.
So for each $t \in \Delta$,  by pulling  back the standard complex
structure $\mu_{0}$ in the Siegel disk $D^{g}$  through the iteration of
$h_{t}$, we can get a $h_{t}$-invariant complex structure $\mu_{t}$
in the whole sphere. Again by a direct calculation we get
$$
\mu_{t}(z) =
\begin{cases}
\frac{\overline{(g^{n})'(z)}}{(g^{n})'(z)}  \frac{ ( h_{t})_{\bar{z}}}{ (h_{t})_{z}}(g^{n}(z))   & \text{ if $g^{n}(z) \in \Omega$ for some integer $n \ge 0$}, \\
0 & \text{ if otherwise}.
\end{cases}
$$
From the above formula, it follows that for almost every  $z\in \widehat{\Bbb C}$,
$\mu_{t}(z)$ depends analytically on $t$. Let $\phi_{t}$ be the
quasiconformal homeomorphism of the sphere which fixes $0$, $1$ and
the infinity and which solves the Beltrami equation given by
$\mu_{t}$. Then $\phi_{t}$ depends analytically on $t$. Let
$$
g_{t}(z) = \phi_{t} \circ h_{t} \circ \phi_{t}^{-1}(z).
$$
We claim
\begin{itemize}
\item[1.] $g_{0} = g$,
\item[2.] $g_{t} \in R_{\theta}^{d}$ for each $t \in \Delta$,
\item[3.] $g_{t}$ depends analytically on $t$ when $t$ varies in $\Delta$,
\item[4.] The post-critical set of $g_{t_{0}}$  does not intersect the $D_{r}$ of $g_{t_{0}}$.
\end{itemize}

Let us prove the claim. The first two assertions follow directly from the construction. Let us prove the third assertion (We would like to remark here that $\phi_{t}$ depends analytically on $t$ does not imply that $\phi_{t}^{-1}$ depends analytically on $t$ also). Note that $\infty \notin D^{g}$ and $g(\infty) \ne \infty$ by the assumption right behind (\ref{cont-rr}).  Take $p \in \Bbb C$ such that $p \notin D^{g}$ and $p \ne g(\infty)$. Let $a_{1}, \cdots, a_{d}$, counted by multiplicities,  be all the $p$-value points of $g$, that is, $g(a_{i}) = p$ for $1 \le i \le d$. Let $b_{i}, 1 \le i \le d$, be all the poles of $g$, again counted by multiplicities.  Since $\infty \notin D^{g}$ and $g(\infty) \ne \infty$, all the $a_{i}$ and $b_{i}$ are complex numbers.    Since both $p$ and $\infty$ do not belong to $D^{g}$,  by the definition of $h_{t}$, it follows that $h_{t}(a_{i}) = g(a_{i}) = p$ and $h_{t}(b_{i}) = g(b_{i}) = \infty$ for all $t \in \Delta$ and $1 \le i \le d$.   Then $\phi_{t}(a_{i}), 1 \le i \le d$, are all the $\phi_{t}(p)$-value points of $g_{t}$, and $\phi_{t}(b_{i}), 1 \le i \le d$, are all the poles of $g_{t}$. Since all the $a_{i}$ and $b_{i}$ do not belong to $D^{g}$, it follows  that all the $\phi_{t}(a_{i})$ and $\phi_{t}(b_{i})$ do not belong to the Siegel disk of $g_{t}$ centered at the origin, and thus are all non-zero complex numbers. Let $$c(t) =  \prod_{i=1}^{d}\frac{\phi_{t}(a_{i})}{\phi_{t}(b_{i})}.$$ Since $g_{t}(0) = 0$, it follows that
$$
g_{t}(z) = \phi_{t}(p) - \frac{\phi_{t}(p)}{c(t)} \cdot \prod_{i=1}^{d} \frac{(z - \phi_{t}(a_{i}))}{(z - \phi_{t}(b_{i}))}.
$$
This implies that $g_{t}$ depends analytically on $t$. The third assertion follows. Now let us prove the last assertion. First note that  $h_{t_{0}}| D^{g} = g|D^{g}$,  and
$$
(\Omega_{h_{t_{0}}} \cup P_{h_{t_{0}}}) - D^{g} = (\Omega_{g} \cup P_{g}) - D^{g}.
$$
Suppose $z \in  (\Omega_{h_{t_{0}}} \cup P_{h_{t_{0}}}) - D^{g}$ is a point such that $g(z) \in D^{g}$. By the previous construction it follows that $z \in V_{i}$ for some $1 \le i \le N$ and  $g(z) = x_{j}^{i}$ for some $1 \le j \le k_{i}$. So $h_{t_{0}}(z) = \Phi_{i}(g(z), t_{0}) = z_{j}^{i}$ belongs to the $\Gamma_{r}$ of $D^{g}$.  Note that $\phi_{t_{0}}: D^{g}\to D^{g_{t_{0}}}$ is a holomorphic isomorphism and is the conjugation map between  $g|D^{g} = h_{t_{0}}|D^{g}: D^{g} \to D^{g}$  and $g_{t_{0}}|D^{g_{t_{0}}}: D^{g_{t_{0}}} \to D^{g_{t_{0}}}$. So $\phi_{t_{0}}$ maps the $\Gamma_{r}$ of $D^{g}$ to the $\Gamma_{r}$ of $D^{g_{t_{0}}}$. In particular, $g_{t_{0}}(\phi_{t_{0}}(z)) = \phi_{t_{0}}(z_{j}^{i})$ belongs to the $\Gamma_{r}$ of $D^{g_{t_{0}}}$. The last assertion of the claim has been proved. The proof of the claim is completed.

Now take $z_{0}$ in the $\Gamma_{r}$ of $D_{g}$ such that
$$|C_{g;k,l,m,n}(z_{0})| = V_{g}(r;k,l,m,n).$$ Since for any given $z$, $\phi_{t}(z)$ is holomorphic in
$t$ for $t \in \Delta$,   it follows that for every integer $i \ge
0$, the map $g_{t}^{i}(\phi_{t}(z_{0})) = \phi_{t}(g^{i}(z_{0}))$ is
holomorphic in $t$ for $t \in \Delta$. Thus the map
$$
C_{g_{t};k,l,m,n}(\phi_{t}(z_{0})) =  \frac{(g_{t}^{k}(\phi_{t}(z_{0}))-g_{t}^{m}(\phi_{t}(z_{0})))
(g_{t}^{l}(\phi_{t}(z_{0}))-g_{t}^{n}(\phi_{t}(z_{0})))}
{(g_{t}^{k}(\phi_{t}(z_{0}))-g_{t}^{n}(\phi_{t}(z_{0})))(g_{t}^{l}(\phi_{t}(z_{0}))-g_{t}^{m}(\phi_{t}(z_{0})))}
$$
is a  holomorphic function in $t$ which does not vanish for $t \in \Delta$.  Since
$\phi_{t}$ maps the $\Gamma_{r}$ of $D^{g}$ to the $\Gamma_{r}$ of
$D^{g_{t}}$,   $\phi_{t}(z_{0})$ belong to the $\Gamma_{r}$ of $D^{g_{t}}$. We thus have
$$
|C_{g_{t};k,l,m,n}(\phi_{t}(z_{0}))| \ge V_{g_{t}}(r;k,l,m,n) \hbox{  for all } t \in \Delta.
$$

This, together with  (\ref{cont-rr}) and the choice of $z_{0}$,
implies that the modulus of the  holomorphic function
$C_{g_{t};k,l,m,n}(\phi_{t}(z_{0}))$ obtains the minimum at $t = 0$.
Since $C_{g_{t};k,l,m,n}(\phi_{t}(z_{0}))$ does not vanish for $t \in \Delta$, it follows  that $C_{g_{t};k,l,m,n}(\phi_{t}(z_{0}))$
is a constant function.  In particular, we have
$$
|C_{g;k,l,m,n}(z_{0})|  =
|C_{g_{_{t_{0}}};k,l,m,n}(\phi_{t_{0}}(z_{0}))| \ge
V_{g_{_{t_{0}}}}(r;k,l,m,n).
$$
But by the last assertion of the claim we just proved, the postcritical set of $g_{t_{0}}$ does
not intersect the $D_{r}$ of $D^{g_{t_{0}}}$. By Lemma~\ref{con-ext}
there is  a $1 < C < \infty$ depending only on $d$ and $\theta$ such
that
$$
V_{g_{_{t_{0}}}}(r;k,l,m,n) > C.
$$
This proves the lemma in the third case. The proof of
Lemma~\ref{post-in} is completed.
\end{proof}

Now let us prove the Main Theorem. Since the forward orbit of any
$z$ in $\Gamma_{r}$ is dense in $\Gamma_{r}$, it follows from
Lemma~\ref{post-in} and Lemma~\ref{p} that there is a $1< K(d,
\theta)<\infty$ depending only on $d$ and $\theta$ such that every
$\Gamma_{r}$ is a $K(d, \theta)$-quasi-circle. The Main Theorem then
follow from Lemma~\ref{inv}.

\section{Proof of Theorem B}
\subsection{From Cross Ratios to Simple closed Geodesics}  For two distinct points $a, b \in \Bbb T$, let $[a, b]$
denote the arc segment which connects $a$ and $b$ in anti-clockwise
direction. For an arc
segment $I \subset {\Bbb T}$, let  $|I|$  denote the length of
$I$ with respect to the Euclidean metric. We say an arc segment $J$ is properly contained in $I$ if $J \subset I$ and $I \setminus J$ consists of two non-trivial arc segments.  In this case, we denote it by $J \Subset I$.

Now for any two arc segments $J \Subset I \subset
\Bbb T$, we define
$$
C(I, J) = \frac{|I||J|}{|R||L|}
$$
where  $R$ and $L$ denote the two arc components of $I - J$, respectively.
From the definition, we have
\begin{lemma}\label{space}
Let $0< K< \infty$. Then for any arc segments $J \Subset I \subset
\Bbb T$, if $C(I, J) < K$, we have
$$
\min \{|R|, |L|\} >   |J|/K.
$$
\end{lemma}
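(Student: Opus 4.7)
The plan is a short direct computation from the definition of $C(I,J)$. Write $I = R \cup J \cup L$ as a disjoint union (up to endpoints), so $|I| = |R| + |J| + |L|$. Since $J \Subset I$, both $R$ and $L$ are non-trivial arcs, so $|R| > 0$ and $|L| > 0$.

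The key observation is that $|L| < |I|$ and $|R| < |I|$ strictly, because the other complementary arc has positive length. Hence
\[
K \;>\; C(I,J) \;=\; \frac{|I||J|}{|R||L|} \;>\; \frac{|I||J|}{|R|\cdot|I|} \;=\; \frac{|J|}{|R|},
\]
which rearranges to $|R| > |J|/K$. Symmetrically, bounding $|R| < |I|$ yields $|L| > |J|/K$. Taking the minimum gives the claimed inequality.

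There is no real obstacle here; the lemma is a one-line estimate packaged in definitional notation, and the only thing to be careful about is that the inequality $|L| < |I|$ is strict, which is ensured by the hypothesis $J \Subset I$ (guaranteeing $|R| > 0$). This is why the conclusion is stated with a strict inequality.
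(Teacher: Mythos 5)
Your computation is correct, and it is essentially the argument the paper has in mind: the paper states the lemma with only the remark ``From the definition,'' so the intended proof is precisely this one-line manipulation. Writing $|I| = |R| + |J| + |L|$, using $J \Subset I$ to get $|R|, |L| > 0$ and hence $|R|, |L| < |I|$ strictly, and then bounding one factor of the denominator by $|I|$ gives $|J|/|R| < C(I,J) < K$ and symmetrically for $|L|$. Nothing is missing.
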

By the above lemma,  it follows that the value $C(I, J)$ measures
the space around $J$ in $I$.

Let $B \in {\mathbf{B}}_{\theta}^{m}$.  Let $k \ge 1$ be an integer
and $S, T \subset \Bbb T$ be two arc segments. We say $S$ is the
pull back of $T$ by $B^{k}$ if $B^{k}: S \to T$ is a homeomorphism.
Suppose  $J \Subset I \subset \Bbb T$ are two arc segments. Let
us denote them by $I^{0}_{B}$ and $J^{0}_{B}$ respectively. For $k \ge 1$, let $I^{k}_{B}$ and $J^{k}_{B}$ denote the arc segments in $\Bbb T$
which are the pull backs of $I^{0}_{B}$ and $J^{0}_{B}$ respectively by $B^{k}$. The next lemma is the key in
the proof of Theorem B.

\begin{lemma}\label{Swiatek-D}
Let $m = 2d -1 \ge 3$ be an odd integer and $0< \theta <1$ be a bounded type
irrational number.  Then, there exist constants $\alpha \in (0, \infty)$ and $\beta \in (0, \infty)$ depending only on $m$ and $\theta$, such that for any centered Blaschke product $B \in {\mathbf{B}}_{\theta}^{m}$ and any disjoint family of arc segments $\{I_{B}^{k}\:|\:0\le k \le N\}$ and any family of arc segments   $\{J_{B}^{k}\:|\:0\le k \le N\}$  with $J_{B}^{k} \Subset I_{B}^{k}$ for all $0 \le k \le N$,  we have
$$
C(I_{B}^{N}, J_{B}^{N}) \le \beta \cdot (1 + C(I_{B}^{0}, J_{B}^{0})^{\alpha}.
$$
\end{lemma}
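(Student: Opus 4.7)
The plan is to convert $C(I,J)$ into the length of a simple closed geodesic on a conformally-attached Riemann surface and then to track this length along the backward orbit via the Relative Schwarz Lemma of Buff and Ch\'eritat \cite{BC}. The whole argument follows the outline of a Swiatek-type cross-ratio distortion inequality, adapted from the compact family $\mathbf{H}_\theta^m$ (covered by Theorem A) to the non-compact family of centered Blaschke products $\mathbf{B}_\theta^m$.

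To every pair $J \Subset I \subset \mathbb{T}$ I attach the doubly connected domain
$$
A(I,J) \;=\; \widehat{\mathbb{C}} \setminus \bigl(J \cup (\mathbb{T} \setminus I)\bigr),
$$
whose modulus $\mu(I,J)$ and hyperbolic core-geodesic length $\ell(I,J) = \pi/\mu(I,J)$ are classical conformal invariants. A direct M\"obius-invariant calculation should give $\ell(I,J) \asymp \log(1+C(I,J))$ for $C$ large (and $\ell \asymp C$ for $C$ small), so the stated inequality is equivalent, after adjusting constants, to an additive estimate of the form $\ell(I_B^N, J_B^N) \le \alpha' \ell(I_B^0, J_B^0) + \beta'$.

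For the pullback step I would exploit the disjointness of $\{I_B^k\}$. Since $B|_\mathbb{T}$ is a circle homeomorphism and each $B: I_B^{k+1} \to I_B^k$ is also a homeomorphism, an inverse branch $g_k$ of $B$ is defined single-valuedly on a sphere-neighbourhood $U_k$ of $I_B^k$ and maps $U_k$ onto a neighbourhood of $I_B^{k+1}$; the disjointness of the arcs guarantees that these branches do not collide across different values of $k$, so the pullback at each step can be treated independently. Applying the Relative Schwarz Lemma to $g_k : U_k \cap A(I_B^k, J_B^k) \to A(I_B^{k+1}, J_B^{k+1})$ then gives an inequality $\ell_{k+1} \le \ell_k + c_k$, where $c_k$ is a ``defect'' measuring how much $g_k$ fails to be a hyperbolic isometry of the two annuli. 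Disjointness together with the bounded-type hypothesis on $\theta$ should bound $\sum c_k$ by a uniform function of $\ell_0$, and telescoping then yields the required additive estimate.

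The main obstacle is extracting these constants uniformly over the non-compact family of centered Blaschke products in $\mathbf{B}_\theta^m$. In Herman's Theorem~A the compactness of $\mathbf{H}_\theta^m$ (Lemma~\ref{comp-H}) makes such uniformity automatic, whereas here (see Remark~\ref{rdd}) the zeros and poles of $B$ can approach $\mathbb{T}$ arbitrarily closely. The centered hypothesis, derived via the Douady--Earle conformal barycenter, should be precisely what is needed to control the large-scale geometry of $B$ and confine the non-compactness to a harmless range of perturbations; making this quantitative and compatible with the iteration of the Relative Schwarz step is, I expect, the technical heart of the proof.
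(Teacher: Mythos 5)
Your overall strategy — replacing the cross-ratio $C(I,J)$ by the core-geodesic length of the annulus $X=(\widehat{\Bbb C}\setminus\Bbb T)\cup(I\setminus J)$ and tracking this length under pullback via the Relative Schwarz Lemma — is exactly the route the paper takes (Lemma~\ref{re-s}, Proposition~\ref{bridge}, Lemma~\ref{lemma:ex}, Lemma~\ref{nsd}). However, several of the mechanisms you sketch are off, and one is a genuine gap.

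First, the one-step estimate cannot be additive. You propose $\ell_{k+1}\le\ell_k+c_k$ and then telescope, which would give $\ell_N\le\ell_0+\sum c_k$, i.e.\ the slope-$1$ case $\alpha'=1$ of your affine estimate. But $B$ has degree $m$ and critical points that can accumulate on $\Bbb T$; when $I^k$ lies ``over'' critical values (the paper's shadowing dichotomy), the best one can do is the multiplicative bound $\ell_{k+1}\le m(2m-1)\,\ell_k$ coming from the covering degree plus Lemma~\ref{lm-1}/Corollary~\ref{cry-1}. If $\ell_k$ is large this incurs an additive defect proportional to $\ell_k$, which is not summable. The correct form of the estimate the paper actually proves is multiplicative, $\ell_N\le C(m)\,\ell_0$: most steps contribute a factor $\prod_z e^{C\mu_z(I^k)}$ close to $1$ whose product is controlled by disjointness ($\sum_k\mu_z(I^k)\le 2\pi$), while only a bounded number of ``shadowing'' steps contribute the large fixed factor $m(2m-1)$. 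Said differently, the proof is additive in $\log\ell$, not in $\ell$; this is also why the final cross-ratio bound is a power $(1+C_0)^\alpha$ rather than a linear bound.

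Second, the claim that an inverse branch of $B$ is ``defined single-valuedly on a sphere-neighbourhood $U_k$ of $I_B^k$'' is false in general. $B|_\Bbb T$ is a homeomorphism, so the branch is single-valued on the circle, but zeros, poles and critical points of $B$ can lie arbitrarily close to $\Bbb T$ precisely because $\mathbf{B}_\theta^m$ is non-compact; the inverse branch need not extend to the core geodesic. The paper handles this by removing the set $Z$ of critical values from the annulus $U_k$ to get $V_k=U_k\setminus Z$, working with the shortest geodesic in $V_k$ in the right homotopy class, and paying for the removal via Lemma~\ref{lm-1} (at worst a factor $p+1\le 2m-1$) and Lemma~\ref{nsd} (a factor $\prod_z e^{C\mu_z(I^k)}$ when no critical value is in the shadow $D(I^k)$ of $I^k$). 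The branch of $B^{-1}$ is then a genuine covering $W_{k+1}\to V_k$, and whether its degree on the relevant geodesic is $1$ or $m$ is exactly the $\Lambda_2/\Lambda_1$ dichotomy.

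Third, you misplace the role of the centered hypothesis. It does not enter the Relative Schwarz iteration at all — Lemma~\ref{lemma:ex} is stated and proved for all $B\in\mathbf{B}_\theta^m$, centered or not. It is used only in Proposition~\ref{bridge}, to pass back from geodesic length to cross-ratio: the lower bound in Lemma~\ref{re-s} has a factor $(2\pi-|I|)^2$ that degenerates as $|I|\to 2\pi$, and Lemma~\ref{centered} uses the conformal barycenter condition $\int_\Bbb T\zeta\,d\mu_B=0$ to bound $2\pi-|I_B^N|$ away from zero (given that $I_B^N$ is disjoint from $I_B^{N-1}$ and hence has $\mu_B$-measure at most $\min\{\theta,1-\theta\}$). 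Your asymptotic $\ell\asymp\log(1+C)$ silently assumes this degeneracy is controlled; without centeredness (or some other hypothesis forcing $|I_B^N|$ away from $2\pi$) the equivalence between length and cross-ratio breaks down on one side and the conclusion of Lemma~\ref{Swiatek-D} fails.

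In short: the high-level blueprint matches the paper, but the one-step inequality must be multiplicative in length (not additive), the critical values of $B$ must be excised and the shadowing dichotomy introduced to make the Relative Schwarz step valid and to localize the expensive degree-$m$ pullbacks, and the centered hypothesis is needed only in the final length-to-cross-ratio conversion, not in controlling the non-compactness during the iteration.
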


The main task in the proof of Theorem  A in \cite{H} is to prove
that the \'{S}wiatek distortion has a uniform upper bound for all the
Blaschke products in ${\mathbf{H}}_{\theta}^{m}$.  The difference
between the two situations is that Herman's proof uses real
techniques and relies essentially on the compact property of
${\mathbf{H}}_{\theta}^{m}$, which does not hold for
${\mathbf{B}}_{\theta}^{m}$ (see
 Lemma~\ref{comp-H} and Remark~\ref{rdd}). To solve this problem, we
make  use of the complex analytic property of the maps in
${\mathbf{B}}_{\theta}^{m}$. Instead of considering  cross ratios,
we consider the length of certain simple closed geodesics. As a
result, we reduce Lemma~\ref{Swiatek-D} to showing that  the length
of certain simple closed geodesics, after disjoint pull backs, can
be increased by at most some factor which is bounded above by a
constant depending only on  $m$ (Lemma~\ref{lemma:ex}). Let us
introduce some notations before we expose this idea further.

Let $\widehat{\Bbb C}$ denote the Riemann sphere. Let $B \in
{\mathbf{B}}_{\theta}^{m}$ be a centered Blaschke product.  For $0 \le k \le N$, let $I^{k}_{B}$ and
$J^{k}_{B}$ be the arc segments given in
Lemma~\ref{Swiatek-D}. Let
$$
X^{k}_{B} =(\widehat{\Bbb C} - \Bbb T) \cup (I_{B}^{k} - J_{B}^{k})
$$
Then there exists a unique simple closed geodesic in $X^{k}_{B}$
which separates $J^{k}_{B}$ and ${\Bbb T} - I^{k}_{B}$.  Let us
denote it by $\gamma^{k}_{B}$. Let $l_{X^{k}_{B}}(\gamma^{k}_{B})$
denote the length of $\gamma^{k}_{B}$ with respect to the hyperbolic
metric in $X^{k}_{B}$. The goal of this section is to reduce
Lemma~\ref{Swiatek-D} to the following lemma.

\begin{lemma}\label{lemma:ex}
Let $m=2d-1 \ge 3$ be an odd integer and $0< \theta <1$ be a bounded
type irrational number. Then there exists a $1< C(m) < \infty$ which
depends only on $m$ such that for any  Blaschke product $B \in
{\mathbf{B}}_{\theta}^{m}$, and any disjoint family of arc segments
$\{I^{k}_{B}\:\big{|}\: 0\le k \le N\}$ and any family of arc segments $\{J_{B}^{k}\:|\:0\le k \le N\}$
with $J_{B}^{k} \Subset I_{B}^{k}$ for all $0 \le k \le N$, we have
$$
\frac{l_{X^{N}_{B}}(\gamma^{N}_{B})} {l_{X^{0}_{B}}(\gamma^{0}_{B})}
\le C(m).
$$

\end{lemma}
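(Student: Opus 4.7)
The plan is to compare the hyperbolic moduli of the slit-annuli $X_B^k$ through the dynamics of $B^N$. Each $X_B^k$ is the Riemann sphere slit along the two disjoint closed arcs $\mathbb T\setminus I_B^k$ and $J_B^k$, hence conformally a round annulus; the hyperbolic length of its core geodesic $\gamma_B^k$ is inversely proportional to $\operatorname{mod}(X_B^k)$. Lemma~\ref{lemma:ex} is therefore equivalent to the modulus bound $\operatorname{mod}(X_B^0)\le C(m)\cdot\operatorname{mod}(X_B^N)$.

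The pairwise disjointness of $\{I_B^k\}_{k=0}^N$ on $\mathbb T$, combined with the fact that $B|_{\mathbb T}$ is a homeomorphism, forces $B^N:I_B^N\to I_B^0$ to be a homeomorphism with a well-defined single-valued inverse. I would extend this inverse as a univalent holomorphic map $g:V\to X_B^N$, where $V\subset X_B^0$ is an annular subdomain containing a one-sided neighborhood of $I_B^0\setminus J_B^0$ on each side of $\mathbb T$; the extension is blocked only by the critical values of $B^N$ lying in $X_B^0$. Since $g$ is a conformal isomorphism onto its image $g(V)\subset X_B^N$, we have $\operatorname{mod}(V)=\operatorname{mod}(g(V))\le\operatorname{mod}(X_B^N)$, and it thus suffices to prove the geometric bound $\operatorname{mod}(V)\ge\operatorname{mod}(X_B^0)/C(m)$.

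This reduction turns Lemma~\ref{lemma:ex} into a purely geometric assertion: the subdomain $V$, obtained from $X_B^0$ by removing the obstruction to the inverse branch, has modulus comparable to $\operatorname{mod}(X_B^0)$ up to a factor depending only on $m$. The obstruction is carried by critical values of $B^N$, ultimately coming from the at most $m-1$ critical points of $B$, and the disjointness of the trajectory $\{I_B^k\}_{k=0}^N$ keeps the obstruction set spatially controlled. I would derive the required modulus estimate from the Relative Schwarz Lemma of Buff--Ch\'eritat applied to the configuration around each critical orbit point. The centered hypothesis $z_B=0$, expressed via $\int_{\mathbb T}\zeta\,d\mu_B=0$, enters here as the normalization that pins down $B$ inside its M\"obius conjugacy class through the Douady--Earle conformal barycenter, and that is what makes the Relative Schwarz estimate depend only on $m$.

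The main obstacle is exactly what distinguishes ${\mathbf{B}}_\theta^m$ from the compact family ${\mathbf{H}}_\theta^m$ in Herman's theorem: critical points of $B$ can approach $\mathbb T$ arbitrarily closely, so the univalent branch $g$ may fail to extend to all of $X_B^0$, and Herman's compactness argument collapses. The central work will be to show that, although the obstruction set can be arbitrarily close to the slits defining $X_B^0$, it can cost at most a uniform $m$-dependent fraction of modulus, independent of $N$ and of the positions of the critical points. This uniformity is precisely what the Relative Schwarz Lemma combined with the centered normalization must deliver.
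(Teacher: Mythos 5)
Your reduction to a modulus comparison, and your instinct to compare $X_B^0$ with $X_B^N$ via inverse branches of $B$ estimated by the Relative Schwarz Lemma, do catch the right flavor, but there are three concrete problems.

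First, a misattribution: the centered hypothesis plays no role in this lemma. The statement is for \emph{any} $B\in\mathbf{B}_\theta^m$, centered or not, and the bound $C(m)$ depends only on $m$, not even on $\theta$. The conformal-barycenter normalization is used only in the passage from geodesic lengths back to cross ratios (Proposition~\ref{bridge}), where it gives a lower bound on $2\pi-|I_B^N|$ so that the factor $(2\pi-|I_B^N|)^{-2}$ in Lemma~\ref{re-s} stays controlled. Attributing the uniformity of the Relative Schwarz estimate to centeredness points you in the wrong direction for this lemma.

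Second, the ``single global inverse branch'' picture does not set up cleanly. Removing critical values of $B^N$ from $X_B^0$ does not yield a single-valued branch (the resulting domain is not simply connected), and even where a branch continues, there is no a priori guarantee its image stays inside $X_B^N$. The paper avoids both problems by pulling back one step at a time through the nested domains $W_{k+1}\subset X_B^{k+1}$ and tracking the geodesic through a chain of inclusions and coverings, rather than constructing one global univalent map.

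Third, and most importantly, the plan leaves the actual estimate unaddressed: why should removing the obstruction cost only an $m$-bounded factor of modulus, uniformly in $N$ and in the positions of the (at most $2m-2$) critical values of $B$, which may sit arbitrarily close to $\mathbb T$? The answer in the paper is a dichotomy based on a notion of \emph{shadowing}: a step $k$ is ``bad'' if $I_B^k$ shadows a critical value; there are at most $3(2m-2)$ bad steps, since a given point is shadowed by at most three pairwise disjoint arcs, and each bad step costs only a crude factor $m(2m-1)$ (via Lemma~\ref{lm-1} and the degree-$\le m$ covering). At a ``good'' step the pullback is degree one, and the Relative Schwarz Lemma gives a loss of the form $\prod_{z\in Z} e^{C\mu_z(I_B^k)}$; summing over good $k$ and using that the $I_B^k$ are disjoint gives $\sum_k \mu_z(I_B^k)\le 2\pi$ for each $z$, hence a total loss $e^{2\pi C}$ independent of $N$. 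Without the shadow dichotomy and the $\sum_k\mu_z(I_B^k)\le2\pi$ observation, ``spatially controlled'' is a hope rather than an argument, and the step you call ``the central work'' is where the proof actually lives.
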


\begin{proposition}\label{bridge}{\rm
Lemma~\ref{lemma:ex} implies Lemma~\ref{Swiatek-D}.}
\end{proposition}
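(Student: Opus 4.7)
My approach is to convert the hyperbolic-geodesic bound of Lemma~\ref{lemma:ex} into the cross-ratio bound of Lemma~\ref{Swiatek-D} via the classical dictionary between the modulus of a sphere-minus-two-arcs and the cross ratio of its four endpoints.

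First I would observe that each $X_B^k$ is precisely the complement in $\widehat{\Bbb C}$ of the two disjoint closed arcs $\overline{\Bbb T\setminus I_B^k}$ and $\overline{J_B^k}$, so $X_B^k$ is a topological open annulus, $\gamma_B^k$ is its unique core geodesic, and one has the classical identity
\[
l_{X_B^k}(\gamma_B^k)\;=\;\frac{\pi}{\mathrm{mod}(X_B^k)}.
\]

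Next I would establish the following two-sided comparison: there exist universal constants $A_1,A_2,B_1,B_2>0$ such that for every admissible configuration arising in Lemma~\ref{lemma:ex},
\[
A_1\log\bigl(1+C(I_B^k,J_B^k)\bigr)-A_2 \;\le\; l_{X_B^k}(\gamma_B^k) \;\le\; B_1\log\bigl(1+C(I_B^k,J_B^k)\bigr)+B_2.
\]
The argument proceeds as follows. Since $\mathrm{mod}(X_B^k)$ is M\"obius-invariant and the sphere minus two disjoint arcs is conformally equivalent, after normalization, to the Teichm\"uller extremal ring, $\mathrm{mod}(X_B^k)$ equals the value of the Gr\"otzsch function $\mu$ at a parameter determined by the chordal cross ratio of the four endpoints. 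Using $|e^{i\theta_1}-e^{i\theta_2}|=2\sin(|\theta_1-\theta_2|/2)$ together with the elementary inequalities $2x/\pi\le\sin x\le x$ for $x\in(0,\pi/2]$, the chordal cross ratio is comparable to the arc-length cross ratio $C(I_B^k,J_B^k)$ up to a bounded multiplicative factor. Feeding in the classical asymptotics $\mu(r)\sim\log(4/r)$ as $r\to 0^+$ and $\mu(r)\to 0$ as $r\to 1^-$ then produces the displayed double inequality.

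Granted this comparison, Lemma~\ref{Swiatek-D} is a one-line consequence of Lemma~\ref{lemma:ex}: applying the lower bound at $k=N$, Lemma~\ref{lemma:ex}, and the upper bound at $k=0$ in succession yields
\[
A_1\log\bigl(1+C(I_B^N,J_B^N)\bigr)-A_2\;\le\; l_{X_B^N}(\gamma_B^N)\;\le\;C(m)\,l_{X_B^0}(\gamma_B^0)\;\le\;C(m)\bigl(B_1\log(1+C(I_B^0,J_B^0))+B_2\bigr),
\]
and exponentiating gives the required estimate with $\alpha=C(m)B_1/A_1$ and $\beta=\exp\bigl((C(m)B_2+A_2)/A_1\bigr)$, both depending only on $m$. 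The main technical obstacle is the two-sided comparison above: the upper bound $l\le B_1\log(1+C)+B_2$ is the straightforward ``well separated endpoints'' direction, but the lower bound $l\ge A_1\log(1+C)-A_2$ needs care in the extremal configurations where some arc of $\Bbb T$ becomes close to the whole circle, and it is precisely the disjointness of the family $\{I_B^k\}$ in Lemma~\ref{lemma:ex} that rules out these pathological configurations and makes the chord-versus-arc comparison uniform. Once that is established, everything else is purely formal.
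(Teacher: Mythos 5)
Your overall strategy is the same as the paper's: convert geodesic length to cross ratio via a two-sided comparison and apply Lemma~\ref{lemma:ex}. In fact the paper's Lemma~\ref{re-s} is precisely the comparison you posit, and the paper's proof of Proposition~\ref{bridge} is exactly your "one-line consequence" paragraph. The issue is in the step you flag as "the main technical obstacle."

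Your claimed two-sided comparison cannot hold with universal constants $A_1, A_2$. The correct statement (Lemma~\ref{re-s} in the paper) reads
\[
\frac{(2\pi - |I|)^{2}}{4\pi^{2}} \cdot C(I, J) \le e^{l_{X}(\gamma)/2} \le 4 \pi^{2} \cdot (1 + C(I, J)),
\]
and the factor $(2\pi - |I|)^{2}$ in the lower bound degenerates as $|I|\to 2\pi$: when $|I|$ is close to $2\pi$ the chord $|e^{ia}-e^{id}|=2\sin(|I|/2)$ is tiny while $|I|$ is large, so the chordal and arc-length cross ratios are not comparable. Thus the constant in the lower bound genuinely depends on $|I_B^N|$, and the whole problem is to bound $2\pi-|I_B^N|$ below.

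You assert that "it is precisely the disjointness of the family $\{I_B^k\}$ that rules out these pathological configurations," and this is where the gap lies. Disjointness alone does not bound $|I_B^N|$ away from $2\pi$: the arc $I_B^{N-1}$ could be tiny while $I_B^N$ covers almost the whole circle, and both can still be disjoint. What disjointness of $I_B^{N}$ and $I_B^{N-1}=B(I_B^{N})$ actually controls is the \emph{invariant measure} $\mu_B(I_B^N)$: since $\mu_B$ is pushed forward to Lebesgue measure under the conjugacy to $R_\theta$, one gets $\mu_B(I_B^N)\le \min\{\theta,1-\theta\}<1/2$. The crucial missing ingredient is then the \emph{centeredness} of $B$: Lemma~\ref{centered} converts this $\mu_B$-measure bound into the needed Euclidean arc-length bound $2\pi - |I_B^N| \ge 2\arccos\bigl(\delta/(1-\delta)\bigr)>0$. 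Without centeredness, $\mu_B$ may be extremely concentrated and a small $\mu_B$-measure says nothing about Euclidean arc length, so the comparison fails. This is exactly why the whole theorem is formulated for centered Blaschke products, and why Proposition~\ref{bridge} requires the hypothesis; your argument as written never uses the centeredness hypothesis, so it cannot close the lower-bound direction.
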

We need to prove Lemmas \ref{Tech}-\ref{centered} before we prove
Proposition~\ref{bridge}.
For $T \in (0, \infty)$, let $\Lambda(T)$ be the modulus of the annulus
$\Bbb C \setminus ([-1, 0] \cup [T, \infty))$.

\begin{lemma}\label{Tech} For all $T \in (0, \infty)$, we have
$$\Lambda(T) \cdot \Lambda(1/T) = 1/4 \:\hbox{  {\rm and}   }\:  T <  e^{2 \pi \Lambda(T)} \le 16 (T+1).$$
\end{lemma}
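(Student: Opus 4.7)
The plan is to recognize $\Lambda(T)$ as the classical Teichm\"{u}ller modulus of the ring $A_T := {\Bbb C} \setminus ([-1, 0] \cup [T, \infty))$ and combine elementary monotonicity with standard formulas from the theory of elliptic integrals.

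For the lower bound $T < e^{2\pi\Lambda(T)}$: when $T > 1$, the round annulus $R = \{z : 1 < |z| < T\}$ is strictly contained in $A_T$, because $[-1, 0]$ sits inside the closed unit disk while $[T, \infty)$ sits outside the disk $\{|z| < T\}$. Both $R$ and $A_T$ separate $[-1, 0]$ from $[T, \infty)$, so monotonicity of modulus under strict inclusion gives $(\log T)/(2\pi) < \Lambda(T)$, which is the desired inequality. For $T \le 1$ the estimate is immediate since $\Lambda(T) > 0$.

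For the upper bound $e^{2\pi\Lambda(T)} \le 16(T+1)$, I would invoke the classical Teichm\"{u}ller extremal estimate (see, e.g., Lehto--Virtanen's monograph or Ahlfors' \emph{Conformal Invariants}). A self-contained derivation reduces it, via the substitution $k^2 = 1/(T+1)$, to the Hersch--Pfluger bound $\mu(k) \le \frac{1}{2\pi}\log(4/k)$ on the Gr\"{o}tzsch ring ${\Bbb D}\setminus[0,k]$, combined with Teichm\"{u}ller's identity linking the Gr\"{o}tzsch and Teichm\"{u}ller extremal moduli.

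For the reciprocity $\Lambda(T)\Lambda(1/T) = 1/4$, I would use the elliptic uniformization of $A_T$ to write $\Lambda(T) = \frac{1}{2}\,K'(k)/K(k)$ with $k = k(T)$ determined by $k^2 = 1/(T+1)$, where $K$ and $K'$ denote complete elliptic integrals of the first kind with complementary modulus $k' = \sqrt{1-k^2}$. A direct computation shows that $T \mapsto 1/T$ corresponds exactly to swapping $k$ with its complementary modulus: indeed $k'^2 = 1 - 1/(T+1) = T/(T+1)$, so $1/k'^2 - 1 = 1/T$, giving $k(1/T) = k'(T)$. Combined with the symmetry $K'(k) = K(k')$ and $K(k) = K'(k')$, the product telescopes:
$$\Lambda(T)\,\Lambda(1/T) = \frac{1}{4}\cdot \frac{K'(k)}{K(k)}\cdot \frac{K(k)}{K'(k)} = \frac{1}{4}.$$
The main obstacle is justifying the formula $\Lambda(T) = \frac{1}{2}\,K'(k)/K(k)$, which requires the explicit conformal map from $A_T$ to a round annulus built from an elliptic integral, with care taken over branch cuts and normalization of the periods. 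Everything else reduces to classical elliptic function identities or to the monotonicity argument above.
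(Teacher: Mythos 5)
The paper gives no proof at all for this lemma --- it simply cites Chapter III of Ahlfors' \emph{Lectures on Quasiconformal Mappings}. Your sketch correctly fills in what that citation covers, using the standard machinery: the reciprocity $\Lambda(T)\Lambda(1/T)=1/4$ via the elliptic-integral representation of the Teichm\"uller/Gr\"otzsch moduli and the modular transformation $k\leftrightarrow k'$, and the upper bound $e^{2\pi\Lambda(T)}\le 16(T+1)$ via the doubling relation $\Lambda(T)=2\mu(1/\sqrt{T+1})$ together with the classical Gr\"otzsch estimate $2\pi\mu(k)\le \log(4/k)$. Your computations check out: with $k=1/\sqrt{T+1}$ one has $k(1/T)=k'(T)$, so the product telescopes to $1/4$, and $2\pi\Lambda(T)\le 2\log(4\sqrt{T+1})=\log\bigl(16(T+1)\bigr)$.

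One small remark on the lower bound: your direct monotonicity argument (embedding the round annulus $\{1<|z|<T\}$ essentially in $\Bbb C\setminus([-1,0]\cup[T,\infty))$ for $T>1$) is clean and avoids the elliptic integrals entirely. In fact, running the same monotonicity argument for the Gr\"otzsch ring --- $\{r<|z|<1\}\subset\Bbb D\setminus[0,r]$ gives $2\pi\mu(r)>\log(1/r)$ --- and then applying the doubling relation yields $e^{2\pi\Lambda(T)}>T+1$, which is slightly stronger than the stated $T<e^{2\pi\Lambda(T)}$ and pairs naturally with the $16(T+1)$ upper bound. Either way, the argument is correct, and since the paper's proof is a bare citation, yours is the more complete account, in essentially the form found in the reference.
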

\begin{proof}
See Chapter III of \cite{A1}.
\end{proof}

\begin{lemma}\label{length-modulus}
Let $A \subset \widehat{\Bbb C}$ be an annulus and $\gamma \subset A$ be its core geodesic. Then
$$
l_{A}(\gamma) = \frac{\pi}{{\rm mod}(A)}
$$
where $l_{A}(\gamma)$ is the length of $\gamma$ with respect to the hyperbolic metric in $A$.
\end{lemma}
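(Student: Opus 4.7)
The plan is to invoke the Uniformization Theorem, which asserts that every doubly connected Riemann surface of finite positive modulus is conformally equivalent to a unique round annulus $A_R = \{z : 1 < |z| < R\}$, with the normalization $\mathrm{mod}(A_R) = \frac{1}{2\pi}\log R$. Since both hyperbolic length and the core geodesic are conformal invariants, it suffices to verify the identity on the model annulus $A_R$.

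To compute the hyperbolic metric on $A_R$ I would pass to the universal cover via the exponential $w \mapsto e^{w}$, which presents $A_R$ as the quotient of the vertical strip $S = \{w : 0 < \mathrm{Re}(w) < \log R\}$ by the translation $w \mapsto w + 2\pi i$. The strip $S$ is simply connected and biholomorphic to the upper half plane under $w \mapsto \exp(i\pi w/\log R)$; pulling back the Poincar\'e density $|du|/\mathrm{Im}(u)$ through this biholomorphism yields the explicit density
$$\rho(w)\,|dw| \;=\; \frac{\pi}{(\log R)\,\sin(\pi\,\mathrm{Re}(w)/\log R)}\,|dw|$$
on $S$, which descends to the hyperbolic density on $A_R$.

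The core geodesic of $A_R$ is the circle $\gamma = \{|z| = \sqrt{R}\}$, which lifts to the vertical line $\{\mathrm{Re}(w) = \tfrac12 \log R\}$ in $S$. At this locus the $\sin$-factor equals $1$, so $\rho$ reduces to the constant $\pi/\log R$ along the lift, and integrating over one fundamental period (of Euclidean length $2\pi$) gives
$$l_{A_R}(\gamma) \;=\; 2\pi\cdot\frac{\pi}{\log R} \;=\; \frac{2\pi^2}{\log R} \;=\; \frac{\pi}{\mathrm{mod}(A_R)}.$$
The only further point worth noting is that $\gamma$ is genuinely the (unique) simple closed geodesic generating $\pi_1(A_R)$: by the rotational symmetry of both $A_R$ and $\rho$, any other smooth loop representing this homotopy class traverses points where $\sin(\pi\,\mathrm{Re}(w)/\log R) < 1$, so its length is strictly greater. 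There is no substantive obstacle here; after reducing to the model annulus via Uniformization, the whole statement is a direct calculation.
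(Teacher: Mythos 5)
Your proposal is correct and follows essentially the same approach as the paper: both normalize to a round annulus, lift to the covering strip under $w \mapsto e^w$, pass to a half-plane to access the Poincar\'e density, and integrate along the lift of the core circle over one fundamental period. The only cosmetic difference is that the paper maps a specific segment of the strip into the right half-plane and integrates $dx/x$ directly, whereas you write out the explicit hyperbolic density on the strip and then evaluate it on the symmetric vertical line; these are the same computation in slightly different packaging.
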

\begin{proof}
We may assume that $A$ is a Euclidean annulus $\{z\:|\: e^{-\alpha} < |z| < e^{\alpha}\}$ for some $\alpha > 0$. It follows that $${\rm mod}(A) = \frac{1}{2 \pi} \log \frac{e^{\alpha}}{e^{-\alpha}} = \frac{\alpha}{\pi}.$$  To compute the length of the core geodesic $\gamma$ of $A$, consider the vertical strip
$$
S = \{z = x + iy \: |\: -\alpha < x < \alpha, -\infty < y< +\infty\}.
$$
The map $\Phi: z \mapsto e^{z}$ is a holomorphic covering map from $S$ to $A$.  Let  $\Gamma = [-\pi i, \pi i]$  be the vertical straight segment.   It is clear that $l_{S}(\Gamma) = l_{A}(\gamma)$. To compute $l_{S}(\Gamma)$, let us consider the map $$\Psi: w \mapsto e^{i \frac{\pi}{2\alpha} w}.$$ The map $\Psi$ maps $S$ isomorphically to the right half plane $H$. Under this map, the vertical straight segment $\Gamma$ is mapped to the horizontal straight segment $\Gamma' = [e^{-\frac{\pi^{2}}{2 \alpha}}, e^{\frac{\pi^{2}}{2 \alpha}}]$. We thus have
$$
l_{A}(\gamma) = l_{S}(\Gamma) = l_{H}(\Gamma') = \int_{e^{-\frac{\pi^{2}}{2 \alpha}}}^{e^{\frac{\pi^{2}}{2 \alpha}}} \frac{1}{x} dx  = \frac{\pi^{2}}{\alpha} = \frac{\pi}{{\rm  mod}(A)}.
$$
This completes the proof of Lemma~\ref{length-modulus}.
\end{proof}

\begin{lemma}\label{re-s}
For any arc segments $J \Subset I \subset \Bbb T$, we have
$$
\frac{(2 \pi - |I|)^{2}}{4 \pi^{2}} \cdot C(I, J) \le e^{l_{X}(\gamma)/2} \le 4 \pi^{2} \cdot (1 + C(I, J)),
$$
where $X = (\widehat{\Bbb C} - \Bbb T) \cup (I - J)$ and $l_{X}(\cdot)$ denotes the length with respect to the hyperbolic in $X$.
\end{lemma}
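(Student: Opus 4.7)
The plan is to normalize $X$ conformally to a standard doubly-slit sphere and then use Lemmas~\ref{length-modulus} and~\ref{Tech} to convert the geodesic length $l_X(\gamma)$ into an estimate on a canonical modular parameter $T$, which is compared with $C(I,J)$ by elementary trigonometric inequalities.

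Let $a,b,c,d \in \Bbb T$ be the endpoints of $J$ and $K := \Bbb T - I$ listed in counter-clockwise order, so that $L = [a,b]$, $J = [b,c]$, $R = [c,d]$, $K = [d,a]$. The M\"obius transformation $\phi(z) = (c-d)(z-a)/[(d-a)(z-c)]$ sends $\Bbb T$ to $\Bbb R$, with $\phi(a)=0$, $\phi(c)=\infty$, $\phi(d)=-1$, and $\phi(b)=T>0$; hence $\phi(K)=[-1,0]$ and $\phi(J)=[T,\infty]$, so $X$ is conformally equivalent to $\Bbb C \setminus ([-1,0] \cup [T,\infty])$, whose modulus is $\Lambda(T)$. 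By Lemma~\ref{length-modulus} combined with the identity $\Lambda(T)\Lambda(1/T)=1/4$ from Lemma~\ref{Tech}, one has $l_X(\gamma)/2 = \pi/(2\Lambda(T)) = 2\pi\Lambda(1/T)$, and a second application of Lemma~\ref{Tech} to $1/T$ gives
$$
\frac{1}{T} < e^{l_X(\gamma)/2} \le 16\Bigl(\frac{1}{T}+1\Bigr).
$$
Reading $T$ as the absolute value of the cross-ratio and using the chord formula $|e^{i\alpha}-e^{i\beta}| = 2|\sin((\alpha-\beta)/2)|$,
$$
T = \frac{|c-d|\,|b-a|}{|d-a|\,|b-c|} = \frac{\sin(|R|/2)\sin(|L|/2)}{\sin(|K|/2)\sin(|J|/2)}.
$$

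It remains to compare $1/T$ with $C(I,J)=|I||J|/(|R||L|)$, using the elementary inequalities $\sin y \le y$ and $\sin y \ge y(\pi-y)/\pi$ for $y \in [0,\pi]$ (the latter easily verified), together with the identity $\sin(|K|/2)=\sin(|I|/2)$. For the lower bound, $\sin y \le y$ applied to the denominator and $\sin y \ge y(\pi-y)/\pi$ applied to $\sin(|I|/2)$ and $\sin(|J|/2)$ in the numerator (followed by $2\pi-|J| \ge 2\pi-|I| = |K|$) yield
$$
\frac{1}{T} \ge \frac{|K|^2|I||J|}{4\pi^2|R||L|} = \frac{(2\pi-|I|)^2}{4\pi^2}\,C(I,J),
$$
so that $e^{l_X(\gamma)/2} > 1/T$ gives the left-hand inequality of the lemma. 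For the upper bound, Jordan's inequality $\sin y \ge 2y/\pi$ on $[0,\pi/2]$ applied to $\sin(|R|/2), \sin(|L|/2)$ in the case $|R|,|L|\le\pi$, combined with $\sin(|I|/2)\le|I|/2$ and $\sin(|J|/2)\le|J|/2$ in the numerator, gives $1/T \le (\pi^2/4)\,C(I,J)$. In the remaining case where, say, $|R|>\pi$---which forces $|L|\le\pi$ since $|R|+|L| \le |I| < 2\pi$---one uses $\sin(|R|/2) \ge (2\pi-|R|)/\pi$ together with $\sin(|I|/2)=\sin(|K|/2)\le|K|/2\le(2\pi-|R|)/2$ to cancel the factor $2\pi-|R|$, giving again $1/T \le (\pi^2/4)\,C(I,J)$. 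Hence
$$
e^{l_X(\gamma)/2} \le 16\Bigl(\frac{1}{T}+1\Bigr) \le 4\pi^2\,C(I,J) + 16 \le 4\pi^2\bigl(1+C(I,J)\bigr).
$$

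The main difficulty is not the conformal geometry (which is routine once the M\"obius normalization is written down) but the trigonometric book-keeping: the crucial points are the identity $\sin(|I|/2)=\sin(|K|/2)$, which allows one to bound this quantity by whichever of $|I|/2$ or $|K|/2$ is smaller, and the strengthened Jordan inequality $\sin y \ge y(\pi-y)/\pi$ on $[0,\pi]$, which furnishes exactly the factor $(2\pi-|I|)/(2\pi)$ needed to extract $(2\pi-|I|)^2/(4\pi^2)$ in the lower bound.
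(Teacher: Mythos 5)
Your proof is correct and follows essentially the same route as the paper's: normalize $X$ by a M\"obius map to $\Bbb C\setminus([-1,0]\cup[T,\infty))$, convert $l_X(\gamma)$ into a bound on the modular parameter via Lemma~\ref{length-modulus}, Lemma~\ref{Tech}, and $\Lambda(T)\Lambda(1/T)=1/4$, then compare that parameter with $C(I,J)$ using the estimates $\sin y\ge y(\pi-y)/\pi$ and $\sin y\le y$ (your normalization simply lands on the reciprocal parameter, which the identity $\Lambda(T)\Lambda(1/T)=1/4$ immediately corrects). The only cosmetic deviation is in the case $|R|>\pi$ of the upper bound, where you cancel a factor of $2\pi-|R|$ via the Jordan-type inequality and $\sin(|I|/2)=\sin(|K|/2)$, whereas the paper uses the monotonicity of $\sin$ on $[\pi/2,\pi]$ to absorb $\sin(|I|/2)/\sin(|R|/2)\le 1$; both give the same constant $\pi^2/4$ and hence the same final bound.
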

\begin{proof}
Assume that $I = [e^{i\theta_{1}}, e^{i \theta_{4}}]$ and $J = [e^{i \theta_{2}}, e^{i \theta_{3}}]$ and assume that $0\le \theta_{1} < \theta_{2}< \theta_{3} < \theta_{4} \le 2 \pi$.   Let $M$ be the M\"{o}bius transformation sending $e^{i \theta_{2}}$ to $0$, $e^{i \theta_{3}}$ to $-1$, and $e^{i \theta_{4}}$ to $\infty$. Then $M(e^{i \theta_{1}}) \in (0, +\infty)$. Let $T = 1/M(e^{i \theta_{1}})$. By Lemmas~\ref{Tech} and \ref{length-modulus} it follows that
$$
l_{X}(\gamma) = \frac{\pi}{\Lambda(1/T)} = 4 \pi \Lambda (T).
$$
This, together with the second inequality of Lemma~\ref{Tech}, implies
$$
T < e^{2 \pi \Lambda (T)} = e^{l_{X}(\gamma)/2} \le 16(T+1).
$$
Since the cross ratio is preserved by M\"{o}bius transformation, it follows that $$
T = \bigg{|}\frac{(e^{i \theta_{3}} - e^{i \theta_{2}})(e^{i \theta_{4}} - e^{i \theta_{1}})}{(e^{i \theta_{4}} - e^{i \theta_{3}})(e^{i \theta_{1}} - e^{i \theta_{2}})} \bigg{|}
$$
Since $|I| = \theta_{4} - \theta_{1}$, $|J| = \theta_{3} - \theta_{2}$, $|R| = \theta_{4} - \theta_{3}$ and $|L| = \theta_{2} - \theta_{1}$, we have
\begin{equation}\label{skr}
T = \bigg{|}\frac{(e^{i |I|} - 1)(e^{i |J|} - 1)}{(e^{i |R|} - 1)(e^{i |L|} - 1)} \bigg{|} = \bigg{|}\frac{\sin(|I|/2) \sin |J|/2)}{\sin|R|/2) \sin(|L|/2)} \bigg{|}.
\end{equation}

Note that for $x \in (0, 2 \pi)$, we have $4 \pi \sin(x/2) \ge x (2 \pi - x)$ and $0 \le \sin(x/2) \le x/2$. Both the inequalities can be easily proved by calculus and we shall leave the proofs to the reader.  From these two inequalities and (\ref{skr}) we get
$$
T \ge \frac{1}{4 \pi^{2}} \frac{|I|(2\pi - |I|) \cdot |J|(2 \pi - |J|)}{|L|\cdot |R|} \ge \frac{1}{4 \pi^{2}} \cdot C(I, J) \cdot (2 \pi - |I|)^{2}.
$$
Since $T < e^{l_{X}(\gamma)/2}$, it follows that
\begin{equation}\label{lt-in}
\frac{(2 \pi - |I|)^{2}}{4 \pi^{2}} \cdot C(I, J) \le e^{ l_{X}(\gamma)/2}.
\end{equation}

Note that for $x \in [0, \pi]$, we have $x/\pi \le \sin(x/2) \le x/2$. Again the inequality can be easily proved by calculus and we omit the proof here. Thus, if $|L| \le \pi$ and $|R| \le \pi$, from this inequality and (\ref{skr}) we get
$$
T \le \frac{\pi^{2}}{4} C(I, J).
$$
If $\pi \le |L| \le |I|$, then $|R| \le \pi$ and
$$
T \le \frac{\sin(|J|/2)}{\sin(|R|/2)} \le \frac{\pi}{2}\frac{|J|}{|R|} \le \frac{\pi}{2}C(I,J).
$$
If $\pi \le |R| \le |I|$, then $|L| \le \pi$ and
$$
T \le \frac{\sin(|J|/2)}{\sin(|L|/2)} \le \frac{\pi}{2}\frac{|J|}{|L|} \le \frac{\pi}{2}C(I,J).
$$

In all the cases we have
\begin{equation}\label{rt-in}
e^{l_{X}(\gamma)/2} \le 16(T+1) \le 4 \pi^{2}C(I,J) +16 < 4 \pi^{2}\cdot(1+C(I,J)).
\end{equation}
Lemma~\ref{re-s} then follows from (\ref{lt-in}) and (\ref{rt-in}).
\end{proof}

For any $B \in {\mathbf{B}}_{\theta}^{m}$, recall that $\mu_{B}$ is the invariant probability measure of $B|\Bbb T: \Bbb T \to \Bbb T$.
\begin{lemma}\label{centered}
Assume that $B \in {\mathbf{B}}_{\theta}^{m}$ is centered and $I \subset \Bbb T$ is an arc segment such that  $\mu_{B}(I) < \delta \le 1/2$. Then
$$
|\Bbb T - I| \ge 2 \arccos \frac{\delta}{1 - \delta}.
$$
\end{lemma}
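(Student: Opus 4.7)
The plan is to exploit the equivalent characterization of the centered condition given by equation (\ref{clt}), namely that $B$ being centered is the same as
$$\int_{\Bbb T}\zeta\,d\mu_B(\zeta)=0.$$
This turns a statement about the conformal barycenter into a purely Euclidean center-of-mass condition, which is exactly what matches the trigonometric bound $\arccos(\delta/(1-\delta))$ appearing in the conclusion.

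First I would normalize. Since rotation of $\Bbb T$ fixes $0\in\Delta$, it preserves both the centered condition and the value $\mu_B(I)$ for the rotated arc and rotated measure. So after rotating simultaneously, I may assume the complementary arc is symmetric about $1$, i.e.
$$\Bbb T\setminus I=\{e^{i\theta}:-\alpha<\theta<\alpha\},\qquad 2\alpha=|\Bbb T-I|.$$
Next, I take the real part of the centered identity to get
$$0=\int_{\Bbb T}\cos\theta\,d\mu_B(e^{i\theta})=\int_{\Bbb T\setminus I}\cos\theta\,d\mu_B+\int_I\cos\theta\,d\mu_B.$$

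Now the two integrals admit simple one-sided bounds: on $\Bbb T\setminus I$ the integrand satisfies $\cos\theta\ge\cos\alpha$, while on $I$ we only need $\cos\theta\ge-1$. Since $\mu_B(I)<\delta$ and hence $\mu_B(\Bbb T\setminus I)>1-\delta$, combining these bounds yields
$$(1-\delta)\cos\alpha<\delta,\qquad\text{i.e.}\qquad\cos\alpha<\frac{\delta}{1-\delta}.$$
Because $\delta\le 1/2$ we have $\delta/(1-\delta)\le 1$, so $\arccos$ is well defined and monotone-decreasing, giving $\alpha>\arccos(\delta/(1-\delta))$. Doubling recovers the asserted inequality $|\Bbb T-I|\ge 2\arccos(\delta/(1-\delta))$.

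There is essentially no obstacle here: the only point that wants a brief justification is the rotational reduction, which is valid because rotating $\mu_B$ by $\zeta\mapsto e^{i\phi}\zeta$ multiplies $\int\zeta\,d\mu_B$ by $e^{i\phi}$ and hence preserves its vanishing. The argument is a direct, clean consequence of equation (\ref{clt}); the work done in the earlier Douady--Earle discussion is exactly what converts the intrinsic hyperbolic center condition into the Euclidean moment identity that drives this estimate.
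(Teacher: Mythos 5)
Your argument is correct and is essentially the paper's proof: both take the real part of the moment identity $\int_{\Bbb T}\zeta\,d\mu_B=0$, bound $\Re(z)\ge\cos(L/2)$ on $\Bbb T\setminus I$ and $\Re(z)\ge-1$ on $I$, and solve for $\cos(L/2)$. The only cosmetic difference is that the paper works with $\eta=\mu_B(I)$ exactly, obtaining $\cos(L/2)\le\eta/(1-\eta)$ before invoking monotonicity of $x/(1-x)$, which sidesteps the minor sign consideration in your step ``$\mu_B(\Bbb T\setminus I)>1-\delta$ yields $(1-\delta)\cos\alpha<\delta$'' (that step silently assumes $\cos\alpha\ge0$, though the conclusion is trivially true otherwise).
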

\begin{proof}
Set $\eta = \mu_{B}(I)$. Then $\eta \le \delta$ and $\mu_{B}(\Bbb T - I) = 1 - \eta$. Set $L = |\Bbb T - I|$ and without loss of generality, let us assume that $\Bbb T - I = [e^{-L/2}, e^{L/2}]$ is the arc segment in $\Bbb T$ which connects $e^{-L/2}$ and $e^{L/2}$ anticlockwise.  Since $0 \le L/2 \le \pi$ and the function $x \mapsto \cos(x)$ is decreasing  on $[0, \pi]$, it follows that for every $z \in \Bbb T - I$, one has
$$
\Re (z) \ge \cos (L/2).
$$ It is clear that $\Re(z) \ge -1$ for all $z \in I$. Since $B$ is centered, by (\ref{clt}) we have  $ \int_{\Bbb T} z d\mu_{B}(z) = 0$. We thus get
$$
 \int_{\Bbb T} \Re(z) d\mu_{B}(z)  =  \Re\big{(} \int_{\Bbb T} z d\mu_{B}(z)\big{)} = 0.
$$ Since
$$
 \int_{\Bbb T} \Re(z) d\mu_{B}(z) =  \int_{\Bbb T - I} \Re(z) d\mu_{B}(z) + \int_{I} \Re(z) d\mu_{B}(z) \ge (1 - \eta) \cos(L/2) - \eta,
$$ we have $$(1 - \eta) \cos(L/2) - \eta \le 0. $$ This implies that
$$
\cos(L/2) \le \frac{\eta}{1 - \eta} \le \frac{\delta}{1 - \delta}
$$
and thus
$$
L \ge 2 \arccos \frac{\delta}{1 - \delta}.
$$
Lemma~\ref{centered} follows.
\end{proof}

Now it is the time to prove Proposition~\ref{bridge}.
\begin{proof}
If $N = 0$, the result is trivial. So let us assume that $N \ge 1$. Since $I_{B}^{N}$ is disjoint from $I_{B}^{N-1}$, we have that
$$
\mu_{B}(I_{B}^{N}) \le \delta = \min\{\theta, 1 - \theta\} < 1/2.
$$
According to Lemma~\ref{centered}, we have
$$
2 \pi - |I_{B}^{N}| = |\Bbb T - I_{B}^{N}| \ge \epsilon = 2 \arccos \frac{\delta}{1 - \delta}.
$$
According to Lemma~\ref{re-s}, we have
$$
C(I_{B}^{N}, J_{B}^{N}) \le \frac{4 \pi^{2}}{\epsilon^{2}} e^{l_{X_{B}^{N}}(\gamma_{B}^{N})/2} \le \frac{4 \pi^{2}}{\epsilon^{2}} e^{\alpha \cdot l_{X_{B}^{0}}(\gamma_{B}^{0})/2}
$$
where $\alpha = C(m)$ is the constant provided by Lemma~\ref{lemma:ex}. The result then follows by taking  $\beta = \epsilon^{-2} \cdot (4 \pi^{2})^{1 + \alpha}$ since by Lemma~\ref{re-s}, we have
$$
e^{\alpha \cdot l_{X_{B}^{0}}(\gamma_{B}^{0})/2} \le (4 \pi^{2})^{\alpha}(1 + C(I_{B}^{0}, J_{B}^{0}))^{\alpha}.
$$
This completes the proof of Proposition~\ref{bridge}.
\end{proof}


\subsection{Proof of Lemma~\ref{lemma:ex}}

The proof of Lemma~\ref{lemma:ex} is based on Lemmas~\ref{lm-1}-\ref{nsd}. Before we state and prove these lemmas, let us introduce some common notations which will be used in all these lemmas.  Let $N \ge 1$ be an arbitrary integer. Let $J^{k} \Subset I^{k} \subset \Bbb T$, $0\le k \le N$, be arc segments such that all $I^{k}, 0 \le k \le N$,  are disjoint with each other.    Let $p \ge 1$ be an integer and $Z = \{z_{1}, \cdots, z_{p}\}$ be a finite subset of $\widehat{\Bbb C}$ containing $p$ points. For $0 \le k \le N$, we set
$$
U_{k} = (\widehat{\Bbb C} - \Bbb T) \cup (I^{k} - J^{k}) \:\:\hbox{ and }\:\: V_{k} = U_{k} - Z.
$$
We let $l_{k}$ be the length of the core geodesic of the annulus $U_{k}$ and $l_{k}'$ be the length of a shortest simple closed geodesic in $V_{k}$ separating $J^{k}$ and $\Bbb T - I^{k}$ (there may be several geodesics with minimal length). Note that $l_{k} \le l_{k}'$.

\begin{lemma}\label{lm-1}
Let $A$ be an annulus and $Z  = \{z_{1}, \cdots, z_{p}\} \subset A$. Then, there is an annulus $B \subset A \setminus Z$ homotopic to $A$ with
$$
{\rm mod}(A) \le (p+1) {\rm mod}(B).
$$
\end{lemma}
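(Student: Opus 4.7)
The plan is a standard pigeonhole argument in a conformal model of the annulus. First I would uniformize: choose a conformal isomorphism $\varphi : A \to C$, where
$$
C = \{\, w = x + iy \in \mathbb{C} \;:\; 0 < x < L \,\} \big/ (w \sim w + 2\pi i), \qquad L = 2\pi \,{\rm mod}(A),
$$
is the round flat cylinder of the same modulus as $A$. Write $w_j = \varphi(z_j) = x_j + i y_j$ for the images of the marked points, and relabel so that $0 = x_0 \le x_1 \le x_2 \le \cdots \le x_p \le x_{p+1} = L$.

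Next I would slice $C$ by the $p$ vertical circles $\{\mathrm{Re}(w) = x_j\}$, $1 \le j \le p$. This produces at most $p+1$ open essential sub-cylinders
$$
C_j = \{\, w \in C \;:\; x_j < \mathrm{Re}(w) < x_{j+1} \,\}, \qquad j = 0, 1, \ldots, p,
$$
each of which avoids every $w_i$ (the points have been cut out together with their entire vertical circles), is homotopic to $C$, and has modulus $(x_{j+1}-x_j)/(2\pi)$. Summing gives
$$
\sum_{j=0}^{p} {\rm mod}(C_j) \;=\; \frac{L}{2\pi} \;=\; {\rm mod}(A).
$$

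By pigeonhole, at least one index $j_0$ satisfies ${\rm mod}(C_{j_0}) \ge {\rm mod}(A)/(p+1)$. Pulling back through $\varphi$, the set $B := \varphi^{-1}(C_{j_0})$ is an essential open sub-annulus of $A$ disjoint from $Z$ and homotopic to $A$, and it satisfies ${\rm mod}(A) \le (p+1)\,{\rm mod}(B)$, as required. There is no serious obstacle here; the only point to check carefully is that each $C_j$ is genuinely an annulus (which holds because the strip between two disjoint vertical lines on the cylinder is itself a flat cylinder) and that its inclusion into $C$ is essential (clear, since the generator of $\pi_1$ is the horizontal loop, which lies inside each $C_j$).
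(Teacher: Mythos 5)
Your proof is correct and is essentially the same argument as the paper's: both uniformize $A$ to a standard model (you use the flat cylinder, the paper uses the round annulus $\{r<|z|<R\}$ — these differ only by the map $z\mapsto\log z$), cut along the at most $p$ circles through the marked points to obtain at most $p+1$ essential sub-annuli whose moduli sum to $\mathrm{mod}(A)$, and then apply pigeonhole to select one of modulus at least $\mathrm{mod}(A)/(p+1)$.
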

\begin{proof}
Without loss of generality, we may assume that $A$ is a round annulus $\{z\:|\: r < |z| < R\}$ for some $0 \le r < R$. Cutting $A$ along at most $p$ round circles passing through the points in $Z$, we find at most $p+1$ round annuli contained in $A - Z$, whose moduli add up to that of $A$. Let $B$ be one of those subannuli with maximal modulus. Then ${\rm mod}(A) \le (p+1) {\rm mod}(B)$. This completes the proof of Lemma~\ref{lm-1}.
\end{proof}

\begin{corollary}\label{cry-1}{\rm
For all $0\le k \le N$, we have $l_{k}' \le (p+1) \cdot l_{k}$.
}
\end{corollary}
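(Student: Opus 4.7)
The plan is to apply Lemma~\ref{lm-1} directly to the annulus $A = U_{k}$ together with the finite set $Z$, producing an essential subannulus $B \subset U_{k} \setminus Z = V_{k}$, homotopic to $U_{k}$ inside $U_{k}$, and satisfying
$$ {\rm mod}(U_{k}) \le (p+1) \cdot {\rm mod}(B). $$
By Lemma~\ref{length-modulus}, the core geodesic $\gamma_{B}$ of $B$, measured in the hyperbolic metric of $B$, has length exactly $\pi/{\rm mod}(B)$, while $l_{k} = \pi/{\rm mod}(U_{k})$ by the same lemma applied to $U_{k}$.

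Next I would transfer the length bound from $B$ to $V_{k}$ via the Schwarz--Pick principle for hyperbolic metrics: since $B \subset V_{k}$, the hyperbolic metric of $V_{k}$ is dominated pointwise on $B$ by the hyperbolic metric of $B$. Consequently the length of $\gamma_{B}$ measured in $V_{k}$ is no larger than its length measured in $B$, giving
$$ \mathrm{length}_{V_{k}}(\gamma_{B}) \;\le\; \frac{\pi}{{\rm mod}(B)} \;\le\; (p+1) \cdot \frac{\pi}{{\rm mod}(U_{k})} \;=\; (p+1) \cdot l_{k}. $$

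Finally, since $B$ is homotopic to $U_{k}$ inside $U_{k}$, the simple closed curve $\gamma_{B}$ is freely homotopic in $V_{k}$ to the core of $U_{k}$; in particular it separates $J^{k}$ from $\Bbb T - I^{k}$, so its free homotopy class in $V_{k}$ is neither trivial nor peripheral to a single puncture of $V_{k}$. Standard hyperbolic geometry then produces a simple closed geodesic $\gamma^{*}$ in $V_{k}$ in this homotopy class whose length is at most $\mathrm{length}_{V_{k}}(\gamma_{B})$, and $\gamma^{*}$ itself separates $J^{k}$ from $\Bbb T - I^{k}$. Since $\gamma^{*}$ is an admissible competitor in the infimum defining $l_{k}'$, the chain of inequalities above yields $l_{k}' \le (p+1) \cdot l_{k}$. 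The only step requiring a moment's care is ensuring that the geodesic representative is a genuine simple closed geodesic rather than a cusp; this is automatic because its free homotopy class is that of the core of the ambient annulus $U_{k}$.
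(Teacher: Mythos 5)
Your proposal is correct and follows essentially the same argument as the paper: apply Lemma~\ref{lm-1} to $A=U_k$ to obtain an essential subannulus $B\subset V_k$ with ${\rm mod}(U_k)\le(p+1){\rm mod}(B)$, use Lemma~\ref{length-modulus} to translate moduli into core-geodesic lengths, and then the Schwarz--Pick inclusion $B\subset V_k$ together with the definition of $l_k'$ as an infimum over admissible curves gives $l_k'\le(p+1)l_k$. Your extra paragraph justifying that the free homotopy class is nontrivial and nonperipheral is a fine bit of added care, but it adds nothing the paper's argument needed to make explicit.
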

\begin{proof}
Apply Lemma~\ref{lm-1} with $A = U_{k}$ and obtain an annulus $B \subset U_{k} - Z = V_{k}$ homotopic to $A$ such that ${\rm mod}(A) \le (p+1)  {\rm mod}(B)$. This implies that
$$
{\rm mod}(U_{k}) \le (p+1)  {\rm mod}(B).
$$
Let  $\gamma_{k}'$ be the core geodesic of $B$. Then by Lemma~\ref{length-modulus} we have
$$
l_{k}' \le l_{V_{k}}(\gamma') \le l_{B}(\gamma_{k}') = \frac{\pi}{{\rm mod}(B)} \le (p+1) \frac{\pi}{{\rm mod}(U_{k})} = (p+1) \cdot l_{k}.
$$
\end{proof}

\begin{definition}\label{key-domain}{\rm
Let $I \subset \Bbb T$ be an arc segment. Let $\Gamma$ be the unique Euclidean circle which passes through the end points of $I$ and is  orthogonal to the unit circle (In the case that $|I| = \pi$, $\Gamma$ is a straight line). We use $D(I)$ to denote the component of $\widehat{\Bbb C} - \Gamma$ which  contains the interior of $I$. }
\end{definition}
\begin{remark}\label{complement}{\rm
From the definition, it is clear that if $|I| < \pi$, $D(I)$ is a Euclidean disk;  if $|I| = \pi$, $D(I)$ is a half plane; and if $|I| > \pi$, $D(I)$ is the outside of a Euclidean disk. }
\end{remark}
\begin{lemma}\label{kgy}
Suppose that $J \Subset I$ are two  arc segments.  Let $\gamma$ be the core geodesic of $(\widehat{\Bbb C} - \Bbb T) \cup (I - J)$. Then $\gamma$ is a Euclidean circle orthogonal to the unit circle. In particular,  $\gamma \subset D(I)$.
\end{lemma}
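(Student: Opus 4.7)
The plan is to reduce to a doubly-symmetric normalization of the four endpoints of $\overline{I-J}$ on $\Bbb T$, and then exhibit $\gamma$ as the fixed set of an additional anti-holomorphic involution. Denote these endpoints by $a,b,c,d$ in anticlockwise cyclic order. First I would use a M\"obius transformation $M$ preserving the unit disk $\Delta$ to send $a,b,c,d$ to $e^{-i\alpha},e^{i\alpha},e^{i(\pi-\alpha)},e^{i(\pi+\alpha)}$ for some $\alpha\in(0,\pi/2)$. Since a M\"obius preserving $\Bbb T$ is determined by three cyclically ordered boundary points, the existence of $M$ reduces to matching the cross-ratio: the symmetric target has cross-ratio $\sec^{2}\alpha\in(1,\infty)$, and an elementary trigonometric identity shows that the cross-ratio of any anticlockwise quadruple on $\Bbb T$ lies in the same range.

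In the normalized domain $X^{*}:=M(X)$, the slit $M(J)$ is the open upper arc of $\Bbb T$ from $e^{i\alpha}$ to $e^{i(\pi-\alpha)}$, $M(\Bbb T-I)$ is the symmetric lower arc, and the two components of $M(I-J)$ are small open arcs around $1$ and $-1$. Consequently complex conjugation $\tau(z):=\bar z$ swaps the two slits and hence preserves $X^{*}$, and being an anti-holomorphic automorphism of $X^{*}$ it is an isometry of the hyperbolic metric on $X^{*}$. Its fixed locus $\Bbb R\cup\{\infty\}$ lies entirely in $X^{*}$ because $\pm 1\in M(I-J)$, and as the fixed set of an anti-holomorphic isometry it is totally geodesic. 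Since $\Bbb R\cup\{\infty\}$ is additionally a simple closed curve separating $M(J)$ from $M(\Bbb T-I)$, it must be the unique core geodesic of the hyperbolic annulus $X^{*}$. Pulling back by $M^{-1}$, which preserves $\Bbb T$, Euclidean circles, and the orthogonality relation, shows that $\gamma=M^{-1}(\Bbb R\cup\{\infty\})$ is a Euclidean circle orthogonal to $\Bbb T$.

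For the inclusion $\gamma\subset D(I)$, let $\Gamma$ be the Euclidean circle orthogonal to $\Bbb T$ through the endpoints of $I$, so $\partial D(I)=\Gamma$. The boundary intersections are $\gamma\cap\Bbb T=\{p,q\}\subset I-J$ and $\Gamma\cap\Bbb T=\{a,d\}$, and these four points appear on $\Bbb T$ in the nested cyclic order $a,p,q,d$. Hence the hyperbolic geodesics $\gamma\cap\Delta$ and $\Gamma\cap\Delta$ are ultra-parallel and therefore disjoint in $\Delta$; the reflection $\sigma_{1}(z)=1/\bar z$ preserves both circles and propagates disjointness to $\widehat{\Bbb C}\setminus\overline{\Delta}$, while $\{p,q\}\cap\{a,d\}=\emptyset$ handles $\Bbb T$, giving $\gamma\cap\Gamma=\emptyset$ in $\widehat{\Bbb C}$. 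Since $\gamma$ is connected and passes through $p,q\in D(I)$, we conclude $\gamma\subset D(I)$. The main obstacle is the cross-ratio computation underpinning the normalization; after it, the argument rests on the standard facts that fixed loci of anti-holomorphic isometries are totally geodesic and that a hyperbolic annulus carries a unique simple closed geodesic.
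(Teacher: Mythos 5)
Your proof is correct. It shares the paper's underlying idea—identify the core geodesic as the fixed circle of a reflection symmetry of the twice-slit sphere—but the technical route is genuinely different. The paper normalizes by sending $\Bbb T$ to the real line and three of the endpoints to $\infty, -1, 0$, observes that $\Bbb C \setminus ([-1,0]\cup[T,\infty))$ is symmetric about the circle $\Gamma$ of center $-1$ and radius $\sqrt{1+T}$, and then constructs an explicit Riemann map from a half-annulus to a round half-annulus, extending by Schwarz reflection to see that $\Gamma$ goes to the core circle. You instead keep $\Bbb T$ fixed, normalize the four endpoints to the doubly-symmetric configuration $e^{\pm i\alpha}, e^{i(\pi\pm\alpha)}$ via a cross-ratio count, observe that $z\mapsto\bar z$ swaps the two slits, and invoke the abstract fact that the fixed locus of an anti-holomorphic isometric involution is a closed geodesic, hence the unique core geodesic of the annulus. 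Your version trades an explicit uniformization for a general principle and is arguably cleaner; the paper's version is more hands-on and needs no appeal to total geodesy of fixed-point sets. A further difference: for the inclusion $\gamma\subset D(I)$ the paper simply states that it "follows directly from the first assertion and the definition of $D(I)$," whereas you supply the missing details (the endpoints $p,q$ of $\gamma\cap\Bbb T$ lie in the two components of $I-J$, the four points $a,p,q,d$ are in nested cyclic order, so $\gamma$ and $\partial D(I)$ are ultra-parallel in $\Delta$ and disjoint everywhere by the reflection symmetry). The only small imprecision, which you share with the paper, is that $\gamma$ could degenerate to a straight line through $\infty$; this is harmless given the paper's own convention for $D(I)$ when $|I|=\pi$.
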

\begin{proof}
Let $I = [a, d]$ and $J = [b, c]$. Let $\phi$ be a M\"{o}bius map which maps $a$ to $\infty$, $b$ to $-1$ and $c$ to $0$. Then $\phi$ maps $d$ to some point $T \in (0, +\infty)$ and maps the unit circle to the real line.  Let $\Gamma$ be the Euclidean circle with center $-1$ and radius $\sqrt{1 + T}$.  Note that $\Bbb C - ([-1, 0] \cup [T, \infty))$ is symmetric about $\Gamma$.

Let $\Omega$ be the disk $\{z\:|\:|z +1| < \sqrt{1+T}\}$. Let $H = \Omega \setminus [-1, 0]$. Let $0< r < 1$ be the number such that
$$
{\rm mod}(H) = \frac{1}{2 \pi} \log \frac{1}{r}.
$$
Let $\psi: H \to \{z\:|\: r < |z| < 1\}$ be the holomorphic isomorphism  such that
 the outer boundary component of $H$ is mapped to the unit circle. Then by Schwarz Reflection Lemma the map $\psi$ can be extended to a holomorphic isomorphism between $\Bbb C - ([-1, 0] \cup [T, \infty))$ and the annulus $\{z\:|\: r < |z| < r^{-1}\}$. In particular, $\psi$ maps $\Gamma$ to the unit circle which is the core geodesic of the annulus $\{z\:|\: r < |z| < r^{-1}\}$. This implies that $\Gamma$ is the core geodesic of $\Bbb C - ([-1, 0] \cup [T, \infty))$. This implies that $\phi^{-1}(\Gamma)$, which must be a Euclidean circle orthogonal to the unit circle, is the core geodesic of $(\widehat{\Bbb C} - \Bbb T) \cup (I - J)$. The proves the first assertion. The  second assertion follows directly  from the first assertion and the definition of  $D(I)$.   This completes the proof of Lemma~\ref{kgy}.
\end{proof}

For $z \in \widehat{\Bbb C} \setminus \Bbb T$, let $\phi_{z}$ be a M\"{o}bius map sending $z$ to $0$ and preserving $\Bbb T$. It is clear that $\phi_{z}$ is unique up to a post-composition with a rotation. For an arc segment $I \subset \Bbb T$,  set $$\mu_{z}(I) = |\phi_{z}(I)|.$$
\begin{definition}\label{shadow}{\rm
Let $z \in  \widehat{\Bbb C}$ and $I \subset \Bbb T$ be an arc segment. We say that $z$ is in the shadow of $I$ or shadowed by $I$ if either $z \in I$ or if $z \in \widehat{\Bbb C} \setminus \Bbb T$ with $\mu_{z}(I) \ge 2\pi /3$. }
\end{definition}

The following  lemma can be directly derived from the definitions and the reader shall easily provide a proof.

\begin{lemma}\label{tv}
Let $z \in  \widehat{\Bbb C}$ and $I \subset \Bbb T$ be an arc segment.  Then the following three properties hold,
\begin{itemize}
\item[1.] $z \in D(I)$ if and only if $z \in I$ or $\mu_{z}(I) > \pi$,
\item[2.] if $z \in D(I)$, then $z$ is in the shadow of $I$,
\item[3.] $z$ can be shadowed by at most three disjoint arc segments.
\end{itemize}
\end{lemma}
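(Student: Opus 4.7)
The plan is to reduce all three assertions to the case $z = 0$ by composing with the M\"obius map $\phi_z$. Since $\phi_z$ preserves $\Bbb T$, it sends Euclidean circles orthogonal to $\Bbb T$ to Euclidean circles orthogonal to $\Bbb T$, so $\phi_z(D(I)) = D(\phi_z(I))$, and in particular $z \in D(I)$ if and only if $0 \in D(\phi_z(I))$. Thus the whole lemma reduces to understanding, for a single arc $J \subset \Bbb T$, when the origin lies in $D(J)$.

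To settle this reduced question, I would compute the bounding orthogonal circle $\Gamma_J$ explicitly: placing $J$ symmetrically about $1$ with length $L \in (0, 2\pi)$, one finds the center of $\Gamma_J$ at $\sec(L/2)$ and its radius equal to $|\tan(L/2)|$, with $\Gamma_J$ degenerating to the imaginary axis when $L = \pi$. Comparing center and radius then yields that $D(J)$ is a disk missing $0$ when $L < \pi$, is an open half-plane not containing $0$ when $L = \pi$, and is the complement of a disk on the opposite side of $\Bbb T$ containing $0$ when $L > \pi$. Hence $0 \in D(J)$ exactly when $L > \pi$, giving property (1). Property (2) follows at once: $z \in I$ gives shadowing by definition, while $z \in D(I) \setminus \Bbb T$ gives $\mu_z(I) > \pi > 2\pi/3$ by (1); the only remaining possibility $z \in D(I) \cap (\Bbb T \setminus I)$ is vacuous because $\Gamma$ meets $\Bbb T$ only at the two endpoints of $I$, so $D(I) \cap \Bbb T$ equals the interior of $I$.

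For property (3), suppose $z$ is shadowed by pairwise disjoint arcs $I_1, \ldots, I_k$. If $z \in \Bbb T$, the shadow condition forces $z \in I_j$ for every $j$, so disjointness gives $k \leq 1$. If $z \notin \Bbb T$, applying $\phi_z$ produces pairwise disjoint arcs $\phi_z(I_j) \subset \Bbb T$ whose lengths $\mu_z(I_j)$ are each at least $2\pi/3$; since the sum of their lengths is bounded by $2\pi$, we get $k \leq 3$. I do not expect any substantial obstacle: the argument is a coordinate change via $\phi_z$ followed by elementary Euclidean geometry, and the only care needed is the trigonometric computation of $\Gamma_J$ and the handling of the degenerate case $L = \pi$, where $\Gamma_J$ is a straight line rather than a circle.
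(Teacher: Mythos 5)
Your proof is correct. The paper does not actually prove this lemma --- it states only that it ``can be directly derived from the definitions and the reader shall easily provide a proof'' --- so there is no paper proof to compare against; but your argument is exactly the expected one, using conformal naturality of $D(\cdot)$ under $\phi_z$ to reduce to $z=0$, the explicit description of $D(J)$ already recorded in Remark~\ref{complement} (disk / half-plane / exterior of disk according to $|J| \lessgtr \pi$), and the pigeonhole bound $\sum_j \mu_z(I_j) \le 2\pi$ for disjoint arcs. The only small clarification worth adding is that in item (1), $\mu_z(I)$ is undefined for $z\in\Bbb T$, so the disjunction should be read as $z\in I$ when $z\in\Bbb T$ and $\mu_z(I)>\pi$ when $z\notin\Bbb T$; your observation that $D(I)\cap\Bbb T$ is the interior of $I$ handles the circle case cleanly.
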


For a hyperbolic Riemann surface $X$, we use $\rho_{X}$ to denote the hyperbolic metric in $X$ and $d_{x}(\cdot, \cdot)$ denote the distance with respect to the hyperbolic metric $\rho_{U}$.

\begin{lemma}\label{lemma:higher order}
For any $d_{0} > 0$, there exists a $0< C_{0} < \infty$ depending
only on $d_{0}$ such that for any two distinct points $x, y \in
\Delta$,  the inequality
$$
\frac{\rho_{_{\Delta-\{y\}}}(x)}{ \rho_{_{\Delta}}(x)} \le 1 + C_{0}
e^{-2d_{\Delta}(x, y)}
$$
holds provided that $d_{\Delta}(x, y)> d_{0}$.
\end{lemma}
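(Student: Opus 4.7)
The plan is to reduce to an explicit one-variable estimate by Möbius invariance, and then verify it by an elementary Taylor expansion on a bounded range.

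First I would use the fact that both metrics $\rho_{\Delta}$ and $\rho_{\Delta\setminus\{y\}}$ are invariant under Möbius automorphisms of $\Delta$ fixing $y$, hence the ratio depends only on the hyperbolic distance $d_{\Delta}(x,y)$. So without loss of generality I normalize $y = 0$ and write $r = |x|$. Then the standard formulas give
$$
\rho_{\Delta}(x) = \frac{2}{1-|x|^{2}}, \qquad \rho_{\Delta\setminus\{0\}}(x) = \frac{1}{|x|\log(1/|x|)},
$$
so that
$$
\frac{\rho_{\Delta\setminus\{0\}}(x)}{\rho_{\Delta}(x)} = \frac{1-r^{2}}{2r\log(1/r)}.
$$
Introducing $u = \log(1/r) > 0$, a one-line computation shows this ratio equals $\sinh(u)/u$.

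Next I relate $u$ to the hyperbolic distance: from $d_{\Delta}(x,0) = \log\frac{1+r}{1-r}$ one has
$$
e^{-d_{\Delta}(x,0)} = \frac{1-r}{1+r} = \tanh(u/2),
$$
so $e^{-2d_{\Delta}(x,0)} = \tanh^{2}(u/2)$. The inequality to be proved therefore becomes
$$
\frac{\sinh(u)}{u} - 1 \le C_{0}\tanh^{2}(u/2),
$$
and the hypothesis $d_{\Delta}(x,0) > d_{0}$ translates to $u \le u_{0}$ where $u_{0} = \log\coth(d_{0}/2) < \infty$.

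The last step is to check that the function
$$
F(u) = \frac{\sinh(u)/u - 1}{\tanh^{2}(u/2)}
$$
is bounded on $(0, u_{0}]$. It is continuous on that interval; and using the Taylor expansions $\sinh(u)/u = 1 + u^{2}/6 + O(u^{4})$ and $\tanh^{2}(u/2) = u^{2}/4 + O(u^{4})$ we get $\lim_{u\to 0^{+}} F(u) = 2/3$, so $F$ extends continuously to $[0, u_{0}]$. Setting $C_{0} = \sup_{0 < u \le u_{0}} F(u)$ yields the desired constant, which depends only on $d_{0}$. There is no real obstacle here; the only point requiring care is the two-sided behavior of $F$ (it must neither blow up near $0$, guaranteed by the Taylor expansion, nor diverge as $u \to u_{0}$, guaranteed by continuity and compactness), and both are handled by the reduction above.
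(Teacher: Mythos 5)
Your proof is correct, and the overall route matches the paper's: normalize by a Möbius automorphism so that $y=0$, use the explicit density formulas for $\Delta$ and $\Delta\setminus\{0\}$, and reduce to a one-variable estimate near the boundary. What you do differently is in the execution. The paper writes $x=1-\delta$ and establishes the bound $\rho_{\Delta\setminus\{0\}}/\rho_{\Delta}<1+2\delta^{2}$ for $\delta<1/2$ via a couple of hand-crafted elementary inequalities for $(1-\delta)|\ln(1-\delta)|$, then pairs that with $e^{-2d_{\Delta}}>\delta^{2}/4$; the intermediate range $d_{0}<d_{\Delta}\le\ln 3$ is dealt with by an implicit compactness remark at the start (``we need only show $C_{0}$ can be taken fixed when $d_{\Delta}$ is large''). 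You instead substitute $u=\log(1/r)$, which makes the ratio collapse to $\sinh(u)/u$ and the distance to $-\log\tanh(u/2)$, so the whole statement becomes the single assertion that $F(u)=\bigl(\sinh(u)/u-1\bigr)/\tanh^{2}(u/2)$ is bounded on $(0,u_{0}]$ with $u_{0}=\log\coth(d_{0}/2)$ — verified at once by the Taylor limit $F(u)\to 2/3$ at $u=0$ and continuity on the compact closure. This absorbs the paper's two-step ``large $d_{\Delta}$ plus compact middle range'' split into a single continuity argument, makes the Möbius-invariance reduction explicit, and avoids the ad hoc inequalities; the trade-off is that it is less quantitative (the paper's version essentially exhibits $C_{0}=8$ once $\delta<1/2$, whereas yours takes $C_{0}$ as a sup without an explicit value, which is harmless for the lemma as stated).
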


\begin{proof}
We need only to show that  $C_{0}$ can be taken to be a fixed
constant when $d_{\Delta}(x, y)$ is large enough.  To show this, it
is sufficient to consider the case that $y = 0$ and $x = 1 - \delta$
with $0 < \delta < 1$ small. By direct calculations, we have
$$
\rho_{\Delta-\{y\}}(x) = \frac{1}{(1- \delta)|\ln(1 - \delta)|} \:
\:  \hbox{ and }\: \rho_{\Delta}(x)= \frac{1}{\delta(1 - \delta/2)}.
$$
Note that for all  $0< \delta < 1$, we have
$$
(1 - \delta) |\ln(1 - \delta)| > (1- \delta) (\delta + \delta^{2}/2
+ \delta^{3}/3) > \delta(1 - \delta/2 - \delta^{2})
$$
and for all $0< \delta < 1/2$, we have
$$
\delta/2 + \delta^{2} < 1/2.
$$
Thus for all $0< \delta < 1/2$, we have
$$
\frac{\rho_{_{\Delta-\{y\}}}(x)}{ \rho_{_{\Delta}}(x)} <  1 +
\frac{\delta^{2}}{1 - \delta/2 - \delta^{2}} < 1 + 2 \delta^{2}.
$$
By a direct calculation, we get
$$
d_{\Delta}(x, y) = \ln \frac{2 - \delta}{\delta}.
$$
The lemma then follows since
$$
e^{-2 d_{\Delta}(x, y)}  = \frac{\delta^{2}}{(2 - \delta)^{2}} >
\delta^{2}/4.
$$
\end{proof}
\begin{lemma}\label{tilm}
There is a universal constant $0< C < \infty$ such that for any arc segment $I \subset \Bbb T$
with $|I| < 2\pi/3$, we have
$$
\frac{\rho_{W -\{0\}}}{\rho_{W}} \le e^{C |I|} \hbox{ on } D(I)
$$
where $W = \widehat{\Bbb C} - (\Bbb T - I)$.
\end{lemma}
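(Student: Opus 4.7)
The plan is to conformally uniformize $W$ as the upper half plane ${\Bbb H} = \{u \in {\Bbb C} : \mathrm{Im}(u) > 0\}$ via an explicit map $\Psi$, and then invoke Lemma~\ref{lemma:higher order}. Since hyperbolic metrics are conformal invariants,
\[
\frac{\rho_{W-\{0\}}(z)}{\rho_W(z)} \;=\; \frac{\rho_{{\Bbb H} - \{\Psi(0)\}}(\Psi(z))}{\rho_{{\Bbb H}}(\Psi(z))},
\]
reducing the estimate to the right-hand side. After a rotation assume $I = \{e^{i\theta} : |\theta| < \alpha\}$ with $\alpha = |I|/2 < \pi/3$, and take $\Psi$ as the composition of the M\"obius map $z \mapsto (z - e^{-i\alpha})/(z - e^{i\alpha})$, a rotation by $e^{i\alpha}$ carrying $\Bbb T - I$ onto $[0,\infty)$, and the branch of $\sqrt{\cdot\,}$ with cut $[0,\infty)$. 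A direct calculation shows $\Psi(0) = e^{i(\pi-\alpha/2)}$, whose Euclidean distance to $\partial {\Bbb H}$ is $\sin(\alpha/2)$, and $\Psi(D(I))$ is the open sector $S = \{u \in {\Bbb H} : \pi/4 < \arg u < 3\pi/4\}$, bisected by the positive imaginary axis $\Psi(I)$.

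The main step is a uniform lower bound $d_{{\Bbb H}}(u, \Psi(0)) \geq \log(c_1 / |I|)$ for $u \in S$ and some absolute constant $c_1 > 0$. Writing $u = re^{i\theta}$ and $p = \Psi(0)$, and using the formula $\cosh d_{{\Bbb H}}(u, p) = 1 + |u - p|^2 /(2\, \mathrm{Im}(u)\, \mathrm{Im}(p))$, the elementary inequality $r^2 + 1 \geq 2r$ together with the angular separation $|\arg u - \arg p| \geq \pi/12$ (which holds throughout $\alpha < \pi/3$) yields
\[
\cosh d_{{\Bbb H}}(u, p) \;\geq\; 1 + \frac{1 - \cos(\pi/12)}{\sin(\alpha/2)} \;\geq\; \frac{4(1 - \cos(\pi/12))}{|I|}.
\]
Hence $e^{-2 d_{{\Bbb H}}(u, p)} \leq C_2 |I|^2$ for an absolute constant $C_2$.

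Fix $d_0$ as in Lemma~\ref{lemma:higher order} (transferred from $\Delta$ to ${\Bbb H}$ by an isometry), and set $\delta_0 = c_1 e^{-d_0}$. For $|I| < \delta_0$ the distance estimate exceeds $d_0$, so the lemma gives
\[
\frac{\rho_{{\Bbb H} - \{p\}}(u)}{\rho_{{\Bbb H}}(u)} \;\leq\; 1 + C_0\, e^{-2 d_{{\Bbb H}}(u, p)} \;\leq\; 1 + C_0 C_2 |I|^2 \;\leq\; \exp(C\, |I|)
\]
by $1 + x \leq e^x$ and $|I|^2 \leq (2\pi/3)\, |I|$, which is the desired bound. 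For $|I| \in [\delta_0, 2\pi/3)$, the closure $\overline{D(I)}$ remains at uniform positive Euclidean distance from $0$, so $\rho_{W-\{0\}}/\rho_W$ is bounded on $D(I)$ by an absolute constant $M$; enlarging $C$ so that $e^{C\delta_0} \geq M$ absorbs this range into the stated exponential bound.

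The main obstacle is the uniform hyperbolic distance estimate; it succeeds because the angular gap of at least $\pi/12$ between $\Psi(D(I))$ and $\Psi(0)$ persists for all admissible $|I|$, while $\Psi(0)$ approaches $\partial {\Bbb H}$ at the explicit rate $\sin(\alpha/2) \sim |I|$.
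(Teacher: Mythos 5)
Your proof is correct and reaches the same central estimate as the paper --- namely, a lower bound of the form $d_W(0,z) \geq \log(c/|I|)$ for $z\in D(I)$, from which Lemma~\ref{lemma:higher order} and $1+x\leq e^x$ finish the argument --- but the route to that estimate is genuinely different. The paper works intrinsically in $W$: it identifies $D(I)$ with the Poincar\'e $\ln\cot(\pi/8)$-neighborhood $D_{\pi/2}(I)$ of the arc $I$, observes that $0$ lies on the boundary of the wider Poincar\'e neighborhood $D_{|I|/2}(I)$, and reads off $d_W(0,D(I)) = \ln\cot(|I|/8) - \ln\cot(\pi/8)$ directly from the angle-to-distance dictionary $\alpha \leftrightarrow \ln\cot(\alpha/4)$. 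You instead explicitly uniformize $W$ onto $\mathbb{H}$ via M\"obius, rotation, and square root; track where $0$ lands (a point $e^{i(\pi-\alpha/2)}$ whose height above $\partial\mathbb{H}$ is $\sin(\alpha/2) \sim |I|/4$) and where $D(I)$ lands (the fixed sector $\pi/4 < \arg u < 3\pi/4$, which sits at angular distance $\geq \pi/12$ from $\arg\Psi(0)\in(5\pi/6,\pi)$); and then grind out the bound from the $\cosh$ distance formula. Your computation checks out: the M\"obius map sends $0\mapsto e^{-2i\alpha}$, the rotation sends it to $e^{-i\alpha}$, the chosen branch of $\sqrt{\cdot}$ sends that to $e^{i(\pi-\alpha/2)}$, and the boundary orthogonal circle maps (via $0,\infty$) to a line, then to the imaginary axis, then to the $\pi/4$- and $3\pi/4$-rays. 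The paper's approach is shorter and more conceptual once one accepts the $\ln\cot(\alpha/4)$ formula for Poincar\'e neighborhoods in the slit plane (citing de Melo--van Strien); yours is more hands-on and self-contained, at the cost of the explicit uniformization and the coordinate computation.

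One small inefficiency: your closing case split at $|I| = \delta_0$ is unnecessary, and the stated justification for the range $[\delta_0, 2\pi/3)$ --- that $\overline{D(I)}$ stays at uniform positive \emph{Euclidean} distance from $0$ --- doesn't by itself control $\rho_{W-\{0\}}/\rho_W$, since $W$ varies with $I$ and a Euclidean gap does not immediately translate to a hyperbolic one. Fortunately you don't need it: your own inequality
\[
\cosh d_{\mathbb{H}}(u,\Psi(0)) \;\geq\; 1 + \frac{1-\cos(\pi/12)}{\sin(\alpha/2)} \;\geq\; 1 + 2\bigl(1-\cos(\pi/12)\bigr)
\]
(using $\sin(\alpha/2) < \sin(\pi/6) = 1/2$) gives a uniform lower bound on $d_W(0,z)$ over the entire range $|I| < 2\pi/3$, so you may simply take $d_0 = \operatorname{arccosh}\bigl(1+2(1-\cos(\pi/12))\bigr)$, apply Lemma~\ref{lemma:higher order} once, and dispense with the split.
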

\begin{proof}
For $0< \alpha < \pi$, let
\begin{equation}\label{gg-ll}
D_{\alpha}(I) = \{z\in W\:|\: d_{W}(z, I) < \ln\cot \frac{\alpha}{4}\}.
\end{equation}
By transforming the unit circle to the real line through a M\"{o}bius map, it follows that  $D_{\alpha}$ is the hyperbolic neighborhood of $I$ with the exterior angle being $\alpha$.  More precisely, $D_{\alpha}$ is a simply connected domain containing $I$ whose boundary is the union of two arc segments of Euclidean circles which are symmetric about the unit circle such that the exterior angle between $\partial D_{\alpha}$ and the unit circle is $\alpha$. To learn more details about the hyperbolic neighborhood in a slit plane, we refer the reader to  \cite{MS} ($\S5$ of Chapter VI ).
By the definition of $D(I)$, we get
$$
D(I) = D_{\pi/2}(I) =  \{z\in W\:|\: d_{W}(z, I) < \ln \cot \frac{\pi}{8}\}.
$$
It is not difficult to see that $0 \in \partial D_{|I|/2}(I)$. So we have
$$
d_{W}(0, D(I)) = \ln \cot \frac{|I|}{8} - \ln \cot \frac{\pi}{8}.
$$
 Since  $|I| \le 2 \pi/3$, we have $0<  \sin \frac{|I|}{8} < |I|/8$. We thus get
$$
\ln  \cot \frac{|I|}{8} > \ln  \frac{\cos \frac{\pi}{12}} {\frac{|I|}{8}} =\ln  \frac{8\cos \frac{\pi}{12}}{|I|}.$$
So for any  $z \in D(I)$, we have
\begin{equation}\label{eq-f}
d_{W}(0, z) > d_{W}(0, D(I)) \ge \ln  \frac{8\cos \frac{\pi}{12}}{|I|} - \ln \cot \frac{\pi}{8}.
\end{equation}
Since $|I| \le 2 \pi/3$, we have  $\cot \frac{|I|}{8} > \cot \frac{\pi}{12}$ and thus
\begin{equation}\label{eq-s}
d_{W}(0, z) > d_{W}(0, D(I)) \ge d_{0} = \ln \cot \frac{\pi}{12}- \ln \cot \frac{\pi}{8} > 0.
\end{equation}
For such $d_{0}$, let $C_{0}$ be the constant provided by Lemma~\ref{lemma:higher order}.  Then for any $z \in D(I)$, by Lemma~\ref{lemma:higher order} and (\ref{eq-f}), we have
$$
\frac{\rho_{W -\{0\}}(z)}{\rho_{W}(z)} \le 1 + C_{0} e^{-2 d_{W}(0, z)} < 1 + \frac{C_{0} \cdot \cot^{2} \frac{\pi}{8}}{64 \cos^{2} \frac{\pi}{12} } |I|^{2}.
$$ Since $|I| < 2 \pi/3$, we have $|I|^{2} < \frac{2 \pi}{3} |I|$. Take
$$
C = \frac{\pi\cdot  C_{0}  \cdot \cot^{2} \frac{\pi}{8}}{96 \cos^{2} \frac{\pi}{12}}.
$$
We then have for any $z \in D(I)$,
$$
\frac{\rho_{W -\{0\}}(z)}{\rho_{W}(z)} \le 1 + C |I| < e^{C|I|}.
$$
The proof of Lemma~\ref{tilm} is completed.
\end{proof}

\begin{rw}[\cite{BC}]
Let $R$ and $S$ be two hyperbolic Riemann surfaces and  $f: R \to S$ be a holomorphic map.   Then
$$
\frac{f^{*}\rho_{_{S}}}{\rho_{_{R}}} \le
\frac{f^{*}\rho_{_{S'}}}{\rho_{_{R'}}}\le 1.
$$
\end{rw}
For a detailed proof of the Relative Schwarz Lemma, we refer  the reader to \cite{BC}.

\begin{lemma}\label{nsd}
Let $C$ be the universal constant provided by Lemma~\ref{tilm}.  Let  $J \Subset I \subset \Bbb T$ be two arc segments and  $Z \subset \widehat{\Bbb C}$ be a finite set  such that no point in $Z$ is shadowed by $I$.    Let $\gamma$ be the core geodesic of the annulus $U = (\widehat{\Bbb C} - \Bbb T) \cup (I - J)$. Then
$$
\frac{l_{U-Z}(\gamma)}{l_{U}(\gamma)} \le \prod_{z \in Z} e^{C \mu_{z}(I)}.
$$
\end{lemma}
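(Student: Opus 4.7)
The plan is to remove the points of $Z=\{z_1,\dots,z_p\}$ one at a time and control the multiplicative change of the hyperbolic density at each step by combining a M\"obius-invariant version of Lemma~\ref{tilm} with the Relative Schwarz Lemma. A point of $Z$ lying in $\Bbb T - I$ belongs to neither $U$ nor $W = \widehat{\Bbb C} - (\Bbb T - I)$, so removing it changes nothing; hence one may assume every $z \in Z$ lies in $\widehat{\Bbb C} - \Bbb T$, and the unshadowed hypothesis then reads $\mu_z(I) < 2\pi/3$.

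For a single such $z$, choose a M\"obius transformation $\phi_z$ preserving $\Bbb T$ with $\phi_z(z)=0$. Since $\phi_z$ is an isometry of the relevant hyperbolic metrics and preserves orthogonality to $\Bbb T$, it sends $W$ to $W' = \widehat{\Bbb C} - (\Bbb T - I')$, where $I' = \phi_z(I)$ has length $\mu_z(I) < 2\pi/3$, and it sends $D(I)$ to $D(I')$. Applying Lemma~\ref{tilm} to $I'$ and pulling back via $\phi_z$ yields the pointwise estimate
$$
\frac{\rho_{W - \{z\}}}{\rho_W} \;\le\; e^{C \mu_z(I)} \quad \text{on } D(I).
$$

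To propagate this to $U = W - J$ and to iterate the removals, set $U_k = U - \{z_1,\dots,z_k\}$. For each $k \ge 1$, apply the Relative Schwarz Lemma to the inclusion $U_{k-1} \hookrightarrow W$ with subdomains $U_k \subset U_{k-1}$ and $W - \{z_k\} \subset W$; the containment $U_k \subset W - \{z_k\}$ holds because $z_k \notin U_k$. This yields the incremental estimate
$$
\frac{\rho_{U_k}}{\rho_{U_{k-1}}} \;\le\; \frac{\rho_{W - \{z_k\}}}{\rho_W} \;\le\; e^{C \mu_{z_k}(I)} \quad \text{on } U_k \cap D(I),
$$
and multiplying over $k = 1,\dots,p$ telescopes to $\rho_{U-Z}/\rho_U \le \prod_{z \in Z} e^{C \mu_z(I)}$ on $(U - Z) \cap D(I)$.

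Finally, since by Lemma~\ref{kgy} the core geodesic $\gamma$ is a Euclidean circle contained in $D(I) \cap U$, integrating the pointwise inequality along $\gamma$ gives $l_{U-Z}(\gamma) \le \prod_{z \in Z} e^{C \mu_z(I)} \cdot l_U(\gamma)$, as required. The main technical point is the precise invocation of the Relative Schwarz Lemma at each step: one must bound the incremental ratio $\rho_{U_k}/\rho_{U_{k-1}}$ by the ratio $\rho_{W-\{z_k\}}/\rho_W$ on the \emph{original} ambient surface $W$, so that Lemma~\ref{tilm} can be invoked directly after M\"obius normalization, rather than on an already punctured surface for which no such direct estimate is available.
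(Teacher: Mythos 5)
Your proof is correct and follows essentially the same strategy as the paper's: reduce to the single-puncture estimate from Lemma~\ref{tilm} via M\"obius normalization, propagate it through the Relative Schwarz Lemma, and telescope over the points of $Z$, then integrate along $\gamma \subset D(I)$. The only difference is cosmetic: the paper passes from $\rho_{U-Z_{k+1}}/\rho_{U-Z_k}$ to $\rho_{W-\{z_{k+1}\}}/\rho_W$ via an intermediate ratio $\rho_{U-\{z_{k+1}\}}/\rho_U$ (two applications of the Relative Schwarz Lemma), whereas you go directly via the inclusion $U_{k-1}\hookrightarrow W$ in a single application, which is a slight streamlining; your explicit dismissal of points of $Z$ on $\Bbb T - I$ at the outset is also a useful clarification that the paper leaves implicit.
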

\begin{proof}
Let  $V =   \widehat{\Bbb C} - (\Bbb T - I)$. Let us label the points in $Z$ by $z_{1}, \cdots, z_{p}$. Let $Z_{0} = \emptyset$ and for  $1 \le k \le p$, let $Z_{k} = \{z_{1}, \cdots, z_{k}\}$.  Note that
$$
\frac{\rho_{U - Z}}{\rho_{U}} = \prod_{k=0}^{p-1} \frac{\rho_{U - Z_{k+1}}}{\rho_{U-Z_{k}}}.
$$
It follows from the Relative Schwarz Lemma that
$$
 \frac{\rho_{U - Z_{k+1}}}{\rho_{U-Z_{k}}}  \le  \frac{\rho_{U - \{z_{k+1}\}}}{\rho_{U}} \le \frac{\rho_{V - \{z_{k+1}\}}}{\rho_{V}}.
$$
So we finally have
\begin{equation}\label{pqw}
\frac{\rho_{U - Z}}{\rho_{U}} \le \prod_{z \in Z} \frac{\rho_{V - \{z\}}}{\rho_{V}}.
\end{equation}
Let $\phi_{z}$ be a M\"{o}bius map which preserves the unit circle and maps $z$ to $0$.  Then  $\phi_{z}(D(I)) =  D(\phi_{z}(I))$. Since $z$ is not shadowed by $I$, we have $|\phi_{z}(I)|< 2\pi/3$.  Note that $\phi_{z}(V) = \widehat{\Bbb C} - (\Bbb T - \phi_{z}(I))$. By Lemma~\ref{tilm}, we have
$$
\frac{\rho_{\phi_{z}(V) - \{0\}}}{\rho_{\phi_{z}(V)}} \le e^{C|\phi_{z}(I)|} = e^{C \mu_{z}(I)}   \hbox{  on  } D(\phi_{z}(I)) =  \phi_{z}(D(I)).
$$
Since the maps $\phi_{z}: V  \to \phi_{z}(V)$ and $\phi_{z}: V -\{z\} \to \phi_{z}(V) - \{0\}$ are  holomorphic isomorphisms,  it follows  that  $$\frac{\rho_{V - \{z\}}(w)}{\rho_{V}(w)}  = \frac{\rho_{\phi_{z}(V) - \{0\}}(\phi_{z}(w))}{\rho_{\phi_{z}(V)}(\phi_{z}(w))} \le e^{C \mu_{z}(I)} \hbox{  for all } w \in D(I). $$ This implies that
\begin{equation}\label{opw}
\frac{\rho_{V - \{z\}}}{\rho_{V}} \le  e^{C \mu_{z}(I)} \hbox{  on   } D(I)
\end{equation} From (\ref{pqw}) and (\ref{opw}) we have
$$
\frac{\rho_{U - Z}}{\rho_{U}} \le  \prod_{z \in Z} e^{C \mu_{z}(I)} \hbox{  on   } D(I).
$$
Note that   $\gamma \subset D(I)$  by Lemma~\ref{kgy}. We thus have
$$
\frac{\rho_{U - Z}}{\rho_{U}} \le  \prod_{z \in Z} e^{C \mu_{z}(I)} \hbox{  on   } \gamma.
$$
Lemma~\ref{nsd} then follows.
\end{proof}

Now let us prove Lemma~\ref{lemma:ex}.
\begin{proof}
Let $B \in {\mathbf{B}}_{\theta}^{m}$. In the beginning of $\S4.2$,
let $I^{k} = I^{k}_{B}$ and $J^{k} = J_{B}^{k}$ where $J_{B}^{k} \Subset I_{B}^{k} \subset \Bbb T$, $0 \le k \le N$,  are the arc segments  given in Lemma~\ref{lemma:ex}. for $0 \le k \le N$.  Let $Z$ be  the set of all the critical values of $B$ and $p = \# Z$. Then $$U_{k} = X_{B}^{k}, V_{k} = X_{B}^{k} - Z  \hbox{ and } l_{k} = l_{X^{k}_{B}}(\gamma^{k}_{B}) \hbox{ for }0 \le k \le N.$$    Since the number of critical values of $B$ is not more than the number of distinct critical points of $B$ which is not more than  $2m-2$,  it follows that $p \le 2 m -2$.

Let
$$
\Lambda_{1} =\{0 \le k \le N-1\:|\: I_{k} \hbox{  shadows at least one point of } Z\}.
$$
By the third assertion of Lemma~\ref{tv}, each point in $Z$ is shadowed by at most three intervals $I^{k}$. This implies that $$|\Lambda_{1}| \le 3p \le 6(m-1).$$
Let
$$
\Lambda_{2} = \{0 \le k \le N-1\:|\: k \notin \Lambda_{1}\}.
$$
Then
$$
\frac{l_{X^{N}_{B}}(\gamma^{N}_{B})} {l_{X^{0}_{B}}(\gamma^{0}_{B})} = \frac{l_{N}}{l_{0}} = \prod_{k=0}^{N-1} \frac{l_{k+1}}{l_{k}} = \bigg{(}\prod_{k\in\Lambda_{1}} \frac{l_{k+1}}{l_{k}}\bigg{)}\cdot \bigg{(} \prod_{k\in\Lambda_{2}} \frac{l_{k+1}}{l_{k}}\bigg{)}.
$$

$\bold{Claim\: 1.}$
\begin{equation}\label{claim-1}
\frac{l_{k+1}}{l_{k}}  \le m(2m -1)  \hbox{ for every } k \in \Lambda_{1}.
\end{equation}
Let us prove the Claim 1.  Let $k \in \Lambda_{1}$.  Let $\xi_{B}^{k}$ be one of the shortest
simple closed geodesics in  $V_{k}$ separating $J^{k}$ and $\Bbb T - I^{k}$. By the minimal property of $\xi_{B}^{k}$, it follows that $\xi_{B}^{k}$ is symmetric about the unit circle.  In particular, the unit circle and $\xi_{B}^{k}$ have two intersection points where they cross perpendicularly. Let $a_{k}$ and $b_{k}$ be the two intersection points. Let $a_{k}'$ and $b_{k}'$ be the two points in the unit circle such that $B(a_{k}') = a_{k}$ and $B(b_{k}') = b_{k}$.  Let $W_{k+1}$ be the component of $B^{-1}(V_{k})$ which contains $a_{k}'$. It is clear that $W_{k+1} \subset U_{k+1}$ and the map $B: W_{k+1} \to V_{k}$ is a holomorphic covering map. Let $\eta_{B}^{k+1}$ be the simple closed geodesic in $W_{k+1}$ such that $a_{k}' \in \eta_{B}^{k+1}$ and $B(\eta_{B}^{k+1}) = \xi_{B}^{k}$.  Then $\eta_{B}^{k+1}$ crosses the unit circle at $a_{k}'$ perpendicularly. It follows that $\eta_{B}^{k+1}$ and the unit circle must  have at least two intersection points. Since $\xi_{B}^{k}$ intersects the unit circle at exactly two points $a_{k}$ and $b_{k}$ and the map $B|\Bbb T: \Bbb T\to \Bbb T$ is a homeomorphism, $\eta_{B}^{k+1}$ and the unit circle  have exactly  two intersection points, $a_{k}'$ and $b_{k}'$. Since $\xi_{B}^{k}$ crosses the unit circle perpendicularly, $\eta_{B}^{k+1}$ crosses the unit circle perpendicularly also. In particular, $\eta_{B}^{k+1}$ separates $\Bbb T - I^{k+1}$ and $J^{k+1}$. Thus we have
$$
l_{X^{k+1}_{B}}(\gamma^{k+1}_{B}) \le l_{X^{k+1}_{B}}(\eta^{k+1}_{B}).
$$
Since $W_{k+1} \subset U_{k+1}$ we have $\rho_{_{W_{k+1}}} \ge \rho_{_{U_{k+1}}}$.  So we have
$$
l_{X^{k+1}_{B}}(\eta^{k+1}_{B})\le  l_{W_{k+1}}(\eta_{B}^{k+1}).
$$
Since $B: W_{k+1} \to V_{k}$ is a holomorphic covering map and the degree of $B$ is $m$,  it follows that
$$
l_{W_{k+1}}(\eta_{B}^{k+1}) \le m \cdot l_{V_{k}}(\xi_{B}^{k}).
$$
By the choice of $\xi_{B}^{k}$ and Corollary~\ref{cry-1}, we have
$$
l_{V_{k}}(\xi_{B}^{k})  = l_{k}' \le (p+1) \cdot  l_{U_{k}}(\gamma_{B}^{k}) = (p+1) \cdot l_{k}\le (2m-1) \cdot l_{k}.
$$
 This, together with the above three inequalities,  implies that
$$
l_{k+1} = l_{X^{k+1}_{B}}(\gamma^{k+1}_{B}) \le m(2m-1)  \cdot l_{k}.
$$
This proves (\ref{claim-1}) and the Claim 1  has been proved.

Let $0< C < \infty$ be the universal constant in Lemma~\ref{nsd}.

$\bold{Claim\: 2.}$
\begin{equation}\label{claim-2}
\frac{l_{k+1}}{l_{k}} \le \prod_{z \in Z} e^{C \mu_{z}(I^{k})} \hbox{ for every k} \in \Lambda_{2}.
\end{equation}

Let us prove the Claim 2.   Let $k \in \Lambda_{2}$.  By Lemma~\ref{kgy}, we have $\gamma_{B}^{k} \subset D(I^{k})$. Since $I^{k}$ does not shadow any point in $Z$, it follows that $D(I^{k})$ does not intersect $Z$. This implies that $\gamma_{B}^{k}$ does not contain any point in $Z$. We thus have $\gamma_{B}^{k} \subset V_{k}$. Let $\xi_{B}^{k}$ be the unique simple closed geodesic in $V_{k}$ which is homotopic to
$\gamma_{B}^{k}$ in $V_{k}$.  Then $\xi_{B}^{k}$ separates $\Bbb T - I^{k}$ and $J^{k}$, and moreover,
\begin{equation}\label{kk-r}
l_{V_{k}}(\xi_{B}^{k}) \le l_{V_{k}}(\gamma_{B}^{k}).
\end{equation}
Since $\gamma_{B}^{k}$ and $V_{k}$ are symmetric about the unit circle,   $\xi_{B}^{k}$ is symmetric about the unit circle also. In particular, the unit circle and $\xi_{B}^{k}$ have two intersection points where they cross perpendicularly.
Now let $W_{k+1}$ and $\eta_{B}^{k+1}$ be as in the proof of the Claim 1. By the same argument as before, it follows that $\eta_{B}^{k+1}$ separates $\Bbb T - I^{k+1}$ and $J^{k+1}$, and the map $B: W_{k+1} \to V_{k}$ is a holomorphic covering map.
Let $\Omega$ be the component of  $\widehat{\Bbb C} - \gamma_{B}^{k}$ which contains $J^{k}$. Since $D(I^{k})$ does not intersect the set $Z$ and  since $\gamma_{B}^{k} \subset  D(I^{k})$ by Lemma~\ref{kgy}, it follows that \begin{equation}\label{cors1}
\Omega \cap Z = \emptyset.
\end{equation}
Let $\tilde{\Omega}$  be the component of  $\widehat{\Bbb C} - \xi_{B}^{k}$ which contains $J^{k}$. Since $\xi_{B}^{k}$ is homotopic to
$\gamma_{B}^{k}$ in $V_{k}$,  from (\ref{cors1}) we get   $$\tilde{\Omega} \cap Z = \emptyset.
 $$ This implies that $\tilde{\Omega}$ contains no critical value of $B$.   It follows   that the covering degree of the map
$$B|\eta_{B}^{k+1}: \eta_{B}^{k+1} \to \xi_{B}^{k}$$ is one. We thus have
\begin{equation}\label{lt}
l_{W_{k+1}}(\eta_{B}^{k+1}) =   l_{V_{k}}(\xi_{B}^{k}).
\end{equation}
Since $W_{k+1} \subset U_{k+1} =X^{k+1}_{B} $ we have $\rho_{_{W_{k+1}}} \ge \rho_{_{U_{k+1}}} = \rho_{X^{k+1}_{B}}$, and thus $$l_{X^{k+1}_{B}}(\eta^{k+1}_{B})\le  l_{W_{k+1}}(\eta_{B}^{k+1}).$$  This, together with (\ref{kk-r}) and (\ref{lt}), implies that $l_{X^{k+1}_{B}}(\eta^{k+1}_{B})\le l_{V_{k}}(\gamma_{B}^{k})$. Since $l_{X^{k+1}_{B}}(\gamma^{k+1}_{B}) \le l_{X^{k+1}_{B}}(\eta^{k+1}_{B})$, we thus have
\begin{equation}\label{cc-u}
l_{k+1} = l_{X^{k+1}_{B}}(\gamma^{k+1}_{B}) \le l_{X^{k+1}_{B}}(\eta^{k+1}_{B})\le l_{V_{k}}(\gamma_{B}^{k}).\end{equation} By Lemma~\ref{nsd}, we have
\begin{equation}\label{cc-v}
\frac{l_{V_{k}}(\gamma_{B}^{k})}{l_{k}} = \frac{l_{V_{k}}(\gamma_{B}^{k})}{l_{U_{k}}(\gamma_{B}^{k})} \le \prod_{z \in Z} e^{C \mu_{z}(I^{k})}.
\end{equation}
From (\ref{cc-u}) and (\ref{cc-v}) we have
$$
 \frac{l_{k+1}}{l_{k}} \le  \prod_{z \in Z} e^{C \mu_{z}(I^{k})}.
$$
This proves the Claim 2.

From Claims 1 and 2 we have
$$
\frac{l_{X^{N}_{B}}(\gamma^{N}_{B})} {l_{X^{0}_{B}}(\gamma^{0}_{B})} = \bigg{(}\prod_{k\in\Lambda_{1}} \frac{l_{k+1}}{l_{k}}\bigg{)}\cdot \bigg{(} \prod_{k\in\Lambda_{2}} \frac{l_{k+1}}{l_{k}}\bigg{)} \le \big{(}m(2m -1)\big{)}^{6(m-1)} \prod_{k \in \Lambda_{2}} e^{C \mu_{z}(I^{k})}
$$ Since  $$ \sum_{k \in \Lambda_{2}} \mu_{z}(I^{k}) \le 2 \pi,$$ we finally have
$$
\frac{l_{X^{N}_{B}}(\gamma^{N}_{B})} {l_{X^{0}_{B}}(\gamma^{0}_{B})} \le e^{2 \pi C} \big{(}m(2m -1)\big{)}^{6(m-1)}.
$$
This completes the proof of Lemma~\ref{lemma:ex}.
\end{proof}


\subsection{Proof of Theorem B} All the arguments used in  this
section are standard. The readers may find them in several previous
literatures, for instance, see \cite{dFdM}, \cite{H}, and
\cite{Pe2}.

Let $B \in {\mathbf{B}}_{\theta}^{m}$ be a centered Blaschke product.  Recall that  $h_{B}: {\Bbb T} \to {\Bbb T}$ is
the circle homeomorphism such that $B|{\Bbb T} = h_{B}^{-1} \circ
R_{\theta}\circ h_{B}$ and $h_{B}(1) = 1$. Now it is  sufficient to prove that there exists an $1 < M(m,
\theta) < \infty$ depending only on $m$ and $\theta$ such that
$h_{{B}}: {\Bbb T} \to {\Bbb T}$ is an $M(m,
\theta)$-quasisymmetric circle homeomorphism.  Before that let us introduce some notations and terminologies first.

 Let $I_{1}$ and  $I_{2}$ be two arc segments in ${\Bbb
T}$. Let $L > 1$. We say $I_{1}$ and $I_{2}$ are $L$-comparable if
$$|I_{2}|/L< |I_{1}|< L|I_{2}|.$$  Let $a, b \in \Bbb T$ be two distinct points.
Recall that we use
$[a, b]$ to denote the arc segment in $\Bbb T$ which connects $a$
and $b$ anticlockwise and  $|[a, b]|$ to denote the Euclidean length
of $[a, b]$.  For an arc segment $[a, b]$ with  $\big{|}{h}_{B}([a, b]) \big{|} \ne \pi$,
let us use $\langle a, b \rangle$ to denote $[a, b]$ if
$\big{|}{h}_{B}([a, b]) \big{|} <  \pi$, and denote $[b, a]$ if
$\big{|}{h}_{B}([a, b]) \big{|} > \pi$.

Let $\theta = [a_{1}, \cdots, a_{n}, \cdots]$. Let $q_{0} = 1$,
$q_{1} = a_{1}$, and $q_{n+1} = q_{n-1} + a_{n+1}q_{n}$ for all $n
\ge 1$. For $x > 0$, let $\{x\}$ denote the fraction part of $x$.
For $n \ge 0$, let  $\langle q_{n} \theta \rangle$ denote  $\{q_{n}
\theta \}$ if $n$ is even and $1- \{q_{n} \theta \}$ if $n$ is odd.
\begin{lemma}\label{uct}
There exists an $L_{0}
\ge 2 $ independent of $\theta$,  such that for all $n \ge
L_{0}$, the following inequality holds,
\begin{equation}\label{univ-c}
 \langle q_{n} \theta \rangle < 1/6.
\end{equation}
\end{lemma}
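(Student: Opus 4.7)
The plan is to derive the bound from two standard facts about the continued fraction of $\theta$. Let $p_n/q_n$ denote the $n$-th convergent (with $p_0 = 0$, $p_1 = 1$, and the same recursion $p_{n+1} = p_{n-1} + a_{n+1}p_n$ as for $q_n$). The first fact is the classical best-approximation inequality
\begin{equation*}
|q_n \theta - p_n| < \frac{1}{q_{n+1}},
\end{equation*}
together with the well-known fact that the sign of $q_n\theta - p_n$ alternates: positive when $n$ is even, negative when $n$ is odd. The second fact is the universal Fibonacci lower bound $q_n \geq F_n$, where $F_n = 1, 1, 2, 3, 5, 8, \dots$ is the standard Fibonacci sequence. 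This follows by induction from $q_{n+1} = q_{n-1} + a_{n+1}q_n \geq q_{n-1} + q_n$, and crucially it is independent of $\theta$, since every partial quotient $a_j$ is at least $1$.

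First I would verify that $\langle q_n \theta\rangle$ as defined coincides with $|q_n\theta - p_n|$. When $n$ is even, $q_n\theta - p_n$ lies in $(0,1)$ and equals $\{q_n\theta\}$, so $\langle q_n\theta\rangle = \{q_n\theta\} = q_n\theta - p_n$. When $n$ is odd, $q_n\theta - p_n$ lies in $(-1,0)$, hence $\{q_n\theta\} = q_n\theta - p_n + 1$ and so $\langle q_n\theta\rangle = 1 - \{q_n\theta\} = p_n - q_n\theta$. Either way, $\langle q_n\theta\rangle = |q_n\theta - p_n|$.

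Combining these gives $\langle q_n\theta\rangle < 1/q_{n+1} \leq 1/F_{n+1}$. Since $F_5 = 8 > 6$, the choice $L_0 = 4$ (manifestly independent of $\theta$) yields $F_{n+1} \geq 8$ for all $n \geq L_0$, whence $\langle q_n\theta\rangle < 1/8 < 1/6$ as required. There is essentially no obstacle here: the lemma is a routine quantitative version of the fact that the best-approximation error $\|q_n\theta\|$ decays at least at the Fibonacci rate, and the only real care needed is the parity bookkeeping that matches the author's sign convention for $\langle\cdot\rangle$ with the usual signed error $q_n\theta - p_n$.
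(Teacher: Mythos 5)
Your proof is correct and follows essentially the same route as the paper: both use the classical bound $|q_n\theta - p_n| < 1/q_{n+1}$ together with the Fibonacci-type growth $q_{n+2} \ge q_n + q_{n+1}$ (starting from $q_0 = 1$, $q_1 \ge 1$) to force $q_{n+1} > 6$ for $n$ large. The only cosmetic difference is that you track the parity to identify $\langle q_n\theta\rangle$ with $|q_n\theta - p_n|$ explicitly and arrive at $L_0 = 4$, whereas the paper simply sets $\delta_n = p_n/q_n - \theta$ and takes $L_0 = 5$; both choices satisfy the statement.
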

\begin{proof}
For $n \ge 0$, let $p_{n}/q_{n}$ be the $n$th continued fraction. Let
$$
\delta_{n} = \frac{p_{n}}{q_{n}} - \theta.
$$
It follows that $|\delta_{n}| < 1/q_{n} q_{n+1}$ (for instance, see \cite{Khi}). This implies that
$$
 \langle q_{n} \theta \rangle = |q_{n} \delta_{n}| < 1/q_{n+1}.
$$
Note that $q_{0} = 1$,  $q_{1} \ge 1$ and $q_{n+2} \ge q_{n} + q_{n+1}$ for all $n \ge 0$. The lemma then follows by taking $L_{0} = 5$.
\end{proof}

As an immediate consequence of Lemma~\ref{uct}, we have
\begin{corollary}\label{nus}
Let $L_{0}$ be the constant in Lemma~\ref{uct}. Then for any $n \ge L_{0}$ and   any $z \in \Bbb T$,  we
have
$$
\langle {R_{\theta}}^{-q_{n}}(z), {R_{\theta}}^{2q_{n}}(z) \rangle =
\langle {R_{\theta}}^{-q_{n}}(z), z \rangle  \cup \langle z,
{R_{\theta}}^{q_{n}}(z) \rangle  \cup \langle {R_{\theta}}^{q_{n}}(z),
{R_{\theta}}^{2q_{n}}(z) \rangle.
$$
\end{corollary}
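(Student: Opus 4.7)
The plan is a direct verification using Lemma~\ref{uct}. The starting observation is that $\langle q_n\theta\rangle$ represents the small distance from $q_n\theta$ to the nearest integer, i.e. $\{q_n\theta\}$ for $n$ even and $1-\{q_n\theta\}$ for $n$ odd. Lemma~\ref{uct} gives $\langle q_n\theta\rangle<1/6$ once $n\ge L_0$, and I will translate this bound into a statement about the geometry of the orbit $\{R_\theta^{kq_n}(z)\}_{k=-1,0,1,2}$ on $\Bbb T$.

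Next I will perform the parity analysis. Set $\delta=\langle q_n\theta\rangle$, so that $2\pi\delta<\pi/3$. If $n$ is even, $q_n\theta\equiv\delta\pmod 1$ and $R_\theta^{q_n}$ is rotation by the small angle $+2\pi\delta$ in the anticlockwise direction, so the four points appear in anticlockwise order as
\[
R_\theta^{-q_n}(z),\; z,\; R_\theta^{q_n}(z),\; R_\theta^{2q_n}(z),
\]
occupying four equally spaced positions on an anticlockwise arc of total Euclidean length $6\pi\delta<\pi$. If $n$ is odd, then $q_n\theta\equiv 1-\delta\pmod 1$, so $R_\theta^{q_n}$ is rotation by $-2\pi\delta$ (clockwise), and the same four points appear in anticlockwise order as $R_\theta^{2q_n}(z),\;R_\theta^{q_n}(z),\;z,\;R_\theta^{-q_n}(z)$, again spanning an arc of length $6\pi\delta<\pi$. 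In either case, the four points lie on a proper sub-arc of a half-circle, and each consecutive pair is separated by exactly $2\pi\delta$.

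The equality in the corollary is now a matter of identifying arcs. Since $h_B$ is an orientation-preserving homeomorphism of $\Bbb T$, it preserves cyclic order, and an arc together with its complement has $h_B$-lengths summing to $2\pi$; hence $\langle a,b\rangle$ is always the member of $\{[a,b],[b,a]\}$ whose $h_B$-image is the shorter one, and this is a purely combinatorial choice determined by which of the two complementary arcs contains which points. Working through the even case, the anticlockwise arc from $R_\theta^{-q_n}(z)$ to $R_\theta^{2q_n}(z)$ passing through $z$ and $R_\theta^{q_n}(z)$ has $h_B$-image strictly shorter than the complementary arc (because the complementary arc contains the $h_B$-image of the arc from $R_\theta^{2q_n}(z)$ all the way around back to $R_\theta^{-q_n}(z)$, which by the same monotonicity argument has $h_B$-length $>\pi$); this identifies $\langle R_\theta^{-q_n}(z),R_\theta^{2q_n}(z)\rangle$ as $[R_\theta^{-q_n}(z),R_\theta^{2q_n}(z)]$ and decomposes it consecutively into $[R_\theta^{-q_n}(z),z]\cup[z,R_\theta^{q_n}(z)]\cup[R_\theta^{q_n}(z),R_\theta^{2q_n}(z)]$, each of which is by the same shortness check exactly the corresponding $\langle\cdot,\cdot\rangle$ arc. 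The odd case is symmetric with all three one-step arcs going the other way and $\langle R_\theta^{-q_n}(z),R_\theta^{2q_n}(z)\rangle$ realized as $[R_\theta^{2q_n}(z),R_\theta^{-q_n}(z)]$.

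There is essentially no real obstacle here: the argument is the combinatorial content of the fact that $q_n$ is a best rational approximation denominator, together with the uniform bound $\langle q_n\theta\rangle<1/6$. The only minor care required is the separate bookkeeping for even and odd $n$, since the two parities correspond to the two directions in which $R_\theta^{q_n}$ acts as a small rotation; in both cases the four points remain within a sub-arc of length less than $\pi$, which is the single quantitative fact that makes all three sub-arcs and their union simultaneously qualify as $\langle\cdot,\cdot\rangle$ arcs.
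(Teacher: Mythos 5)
Your proof is correct, and it is the natural expansion of what the paper leaves implicit: the text presents Corollary~\ref{nus} as ``an immediate consequence of Lemma~\ref{uct}'' with no written argument, and your verification---that $R_\theta^{q_n}$ rotates by $\pm 2\pi\langle q_n\theta\rangle$, that the four points $R_\theta^{-q_n}(z),z,R_\theta^{q_n}(z),R_\theta^{2q_n}(z)$ are therefore equally spaced and span an arc of Euclidean length $6\pi\langle q_n\theta\rangle<\pi$, and that this forces all the $\langle\cdot,\cdot\rangle$ arcs to be the short ones---is precisely the intended content, complete with the correct parity bookkeeping for the sign of $q_n\theta-p_n$.

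Two small remarks on exposition rather than substance. First, the corollary is phrased for the rigid rotation $R_\theta$, for which the linearizing homeomorphism is the identity; so the ``$h_B$-length'' in the definition of $\langle\cdot,\cdot\rangle$ reduces here to Euclidean arc length, and your paragraph routing the argument through $h_B$ is harmless but not needed. Second, your aside that the short-arc choice is ``a purely combinatorial choice determined by which of the two complementary arcs contains which points'' is not true in general (for a genuine $h_B$ the choice depends on the metric, not only on cyclic order), but since you then determine the choice by the explicit Euclidean bound $6\pi\langle q_n\theta\rangle<\pi$, this loose phrasing does not create a gap.
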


\begin{lemma}\label{bl}
Suppose that $n \ge L_{0}$. Let $z \in \Bbb T$. Then the following
two assertions hold.
\begin{itemize}
\item[1.] Let $I = \langle R_{\theta}^{-q_{n}}(z),
R_{\theta}^{2q_{n}}(z) \rangle$.
 Then  $\{R_{\theta}^{-k}(I)\: \big{|}
\: 0 \le k \le q_{n-2}-1\}$  is a disjoint family.
\item[2.] Let $I = \langle z,
R_{\theta}^{q_{n}}(z) \rangle$.  Then $\Bbb T \subset
\bigcup_{k=0}^{q_{n} + q_{n+1} -1} R_{\theta}^{-k}(I)$.
\end{itemize}
\end{lemma}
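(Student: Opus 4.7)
I would reduce both assertions to classical facts about circle rotations. Set $\eta_k := \langle q_k \theta \rangle$. By Lemma~\ref{uct}, $\eta_n < 1/6$ for all $n \ge L_0$, so by Corollary~\ref{nus} the arc $I$ in part (1) decomposes into three consecutive short arcs each of (normalized) length $\eta_n$, giving $|I| = 3\eta_n < 1/2$, while in part (2) we have $|I| = \eta_n$. Each arc extends from $z$ in the direction $D$ determined by the sign of $\alpha_n = q_n\theta - p_n$. Write $\|x\|$ for the distance from $x$ to the nearest integer, so $\|q_k\theta\| = \eta_k$; I shall use the classical best-approximation property $\|j\theta\| \ge \eta_{N-1}$ for all $1 \le j < q_N$.

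For part (1), disjointness of $\{R_\theta^{-k}(I)\}_{0 \le k \le q_{n-2}-1}$ reduces, after applying $R_\theta^{k}$, to showing that $I$ and $R_\theta^{-j}(I)$ are disjoint for every $1 \le j \le q_{n-2}-1$. Two arcs of length $\ell < 1/2$ related by a rigid rotation of cyclic magnitude $\|j\theta\|$ are disjoint if and only if $\|j\theta\| > \ell$; here $\ell = 3\eta_n$. Best approximation gives $\|j\theta\| \ge \eta_{n-3}$, and the recursion $\eta_{k-1} = a_{k+1}\eta_k + \eta_{k+1}$ together with $a_k \ge 1$ yields $\eta_{n-3} \ge \eta_{n-2} + \eta_{n-1} \ge 2\eta_{n-1} + \eta_n > 3\eta_n$, since $\eta_{n-1} > \eta_n$. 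This settles (1).

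For part (2), each translate $R_\theta^{-k}(I)$ is an arc of length $\eta_n$ with initial endpoint $R_\theta^{-k}(z)$, extending in direction $D$. The set of initial endpoints $\mathcal{O} := \{R_\theta^{-k}(z) : 0 \le k \le q_n + q_{n+1}-1\}$ has exactly $q_n + q_{n+1}$ distinct points (by irrationality of $\theta$) and, up to a rigid rotation, coincides with the orbit $\{-k\theta \bmod 1 : 0 \le k \le q_n + q_{n+1}-1\}$. I would then invoke the three-distance theorem at the renormalization time $N = q_n + q_{n+1}$: $\mathcal O$ partitions $\Bbb T$ into $q_{n+1}$ arcs of length $\eta_n$ and $q_n$ arcs of length $\eta_{n+1}$, consistent with the identity $q_{n+1}\eta_n + q_n\eta_{n+1} = 1$. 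In particular, every gap has length at most $\eta_n$. Given $p \in \Bbb T$, let $q \in \mathcal O$ be the immediate predecessor of $p$ in direction $D$; the gap from $q$ in direction $D$ has length at most $\eta_n$, so $p$ lies within arclength $\eta_n$ of $q$ in that direction, hence $p \in R_\theta^{-k}(I)$ for the unique $k$ with $R_\theta^{-k}(z) = q$. This completes the covering.

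The only non-trivial technical input is the precise three-distance description at $N = q_n + q_{n+1}$, which is the step I expect to require the most care. I would establish it by a brief renormalization induction: at $N = q_{n+1}$ the classical two-distance description gives $q_{n+1}-q_n$ gaps of length $\eta_n$ and $q_n$ gaps of length $\eta_n + \eta_{n+1}$; adding the points $(q_{n+1}+i)\theta$ for $i = 0,1,\dots,q_n-1$ splits exactly one long gap per step into sub-gaps of lengths $\eta_n$ and $\eta_{n+1}$, so all long gaps are exhausted by time $q_n + q_{n+1}$. With this combinatorial lemma in hand, both parts of Lemma~\ref{bl} follow in a handful of lines each.
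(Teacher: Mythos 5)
Your proof is correct, and for assertion (1) it takes a genuinely different and shorter route than the paper's. The paper argues by contradiction: it assumes some $R_\theta^{-k}(z)$ with $0<k<q_{n-2}$ lies in $\langle R_\theta^{-3q_n}(z),R_\theta^{3q_n}(z)\rangle$, runs a four-case analysis according to which sub-arc the point falls in, and derives a contradiction with the closest-return structure in each case. You instead reduce disjointness of $I$ and $R_\theta^{-j}(I)$ to the single inequality $\|j\theta\|>3\eta_n$ (valid because $3\eta_n<1/2$, which is exactly what $n\ge L_0$ gives via Lemma~\ref{uct}), and obtain it in one line from best approximation ($\|j\theta\|\ge\eta_{n-3}$ for $1\le j<q_{n-2}$) together with the recursion $\eta_{k-1}=a_{k+1}\eta_k+\eta_{k+1}$, which yields $\eta_{n-3}\ge 2\eta_{n-1}+\eta_n>3\eta_n$. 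Both arguments rest on the same arithmetic of denominators and closest returns, but your version isolates the precise numeric margin and avoids the case split entirely. For assertion (2) the paper simply cites the literature and gives no proof, so your argument fills a gap rather than duplicating anything: the gap description you quote at $N=q_n+q_{n+1}$ (namely $q_{n+1}$ gaps of length $\eta_n$ and $q_n$ of length $\eta_{n+1}$, consistent with $q_{n+1}\eta_n+q_n\eta_{n+1}=1$) is indeed the standard Steinhaus three-distance statement at this renormalization time, and the covering conclusion is immediate once one observes, as you do, that $\langle z,R_\theta^{q_n}(z)\rangle$ and all of its pull-backs extend from their base point $R_\theta^{-k}(z)$ in the same direction, with length equal to the maximal gap. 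I would only suggest spelling out the renormalization induction a bit more fully in a final write-up (verifying that each added point $-\,(q_{n+1}+i)\theta$ lands in, and bisects, a long gap), since that is the one step you flag as requiring care; it is standard but not trivial.
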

\begin{proof}
The second assertion  is standard, for instance, see \cite{dFdM},
\cite{H}, and \cite{Pe2}. Let us prove the first assertion only. Let
us prove it by contradiction. Suppose it were not true. Then there
exists a $0 < k < q_{n-2}$ such that
$$
R_{\theta}^{-k}(z) \in \langle R_{\theta}^{-3q_{n}}(z),
R_{\theta}^{3q_{n}}(z) \rangle.
$$
It is clear that $R_{\theta}^{-k}(z) \notin \langle
R_{\theta}^{-q_{n}}(z), R_{\theta}^{q_{n}}(z) \rangle$ by the property
of the closest returns. Then we have the following four cases.

In the first case, $R_{\theta}^{-k}(z) \in \langle
R_{\theta}^{-3q_{n}}(z),R_{\theta}^{-2q_{n}}(z) \rangle$. Let $\xi =
R_{\theta}^{-3q_{n}}(z)$. Then $R_{\theta}^{3q_{n}-k}(\xi) \in \langle
\xi, R_{\theta}^{q_{n}}(\xi) \rangle$. We then must have $3q_{n} - k
= q_{n} + q_{n+1}$. Since $q_{n+1} = q_{n-1} + a_{n+1} q_{n}$, it
follows that $a_{n+1} = 1$. So $k = q_{n} - q_{n-1} \ge q_{n-2}$.
This is a contradiction.

In the second case, $R_{\theta}^{-k}(z) \in \langle
R_{\theta}^{-2q_{n}}(z), R_{\theta}^{-q_{n}}(z) \rangle$. Let $\xi =
R_{\theta}^{-q_{n}}(z)$. Then $R_{\theta}^{q_{n}-k}(\xi) \in \langle
R_{\theta}^{-q_{n}}(\xi), \xi  \rangle$. Since $0 < q_{n} - k <
q_{n}$, this is impossible.

In the third case, $R_{\theta}^{-k}(z) \in \langle
R_{\theta}^{q_{n}}(z),R_{\theta}^{2q_{n}}(z) \rangle$. Let $\xi =
R_{\theta}^{-k}(z)$. Then $R_{\theta}^{2q_{n}+k}(\xi) \in \langle \xi,
R_{\theta}^{q_{n}}(\xi) \rangle$. Since  $q_{n} < 2q_{n}+k = q_{n} +
q_{n} + k < q_{n} + q_{n+1}$, this is impossible.

In the last case, $R_{\theta}^{-k}(z) \in \langle
R_{\theta}^{2q_{n}}(z), R_{\theta}^{3q_{n}}(z) \rangle$. Let $\xi =
R_{\theta}^{-k}(z)$. Then $R_{\theta}^{3q_{n}+k}(\xi) \in \langle \xi,
R_{\theta}^{q_{n}}(\xi) \rangle$. Since $0< k< q_{n-2}$,  we  must
have $q_{n} < 3q_{n} + k <  q_{n} + 2q_{n+1}$.
We claim that
$3q_{n} + k =  q_{n} + q_{n+1}$. Let us prove the claim. Assume that the claim were not true.
There are two cases. In the first case,  we have  $3q_{n} + k =  q_{n} + l$ with $ 2q_{n} < l < q_{n+1}$.  Then by the property of the closest returns,  we have
$|\langle R_{\theta}^{q_{n}}(\xi),
R_{\theta}^{q_{n} + l}(\xi) \rangle| = |\langle \xi,
R_{\theta}^{l}(\xi) \rangle| > |\langle \xi,
R_{\theta}^{q_{n}}(\xi) \rangle|$. This is a contradiction with $R_{\theta}^{q_{n} + l}(\xi) =  R_{\theta}^{3q_{n}+k}(\xi) \in \langle \xi,
R_{\theta}^{q_{n}}(\xi) \rangle$. In the second case, we have   $3q_{n} + k =  q_{n} + q_{n+1} +  l'$ with some $l' > 0$. Since   $0< k < q_{n-2}$, it follows that $l' = 2q_{n} + k - q_{n+1} < 2q_{n} + q_{n-2} - q_{n+1} < q_{n}$. Since both $R_{\theta}^{q_{n} + q_{n+1}}(\xi)$ and $R_{\theta}^{3q_{n} + k}(\xi)$ belong to $ \langle \xi,
R_{\theta}^{q_{n}}(\xi) \rangle$, it follows that $|\langle \xi,
R_{\theta}^{l'}(\xi) \rangle| = |\langle R_{\theta}^{q_{n} + q_{n+1}}(\xi),
R_{\theta}^{3q_{n} + k}(\xi) \rangle| <  |\langle \xi,
R_{\theta}^{q_{n}}(\xi) \rangle|$. This is again impossible. Thus  the claim has been proved and
we must have $3q_{n} + k =  q_{n} + q_{n+1}$.

By the claim we just proved, we have $q_{n+1} =  2q_{n} + k$.
Since $q_{n+1} = q_{n-2} + a_{n+1} q_{n}$ and $0< k <q_{n-2}$, we
get $a_{n+1} = 1$. This implies that $q_{n-2} = q_{n} + k$. This is
impossible. The proof of the lemma is completed.
\end{proof}

Let $L_{0} > 0$ be the universal constant provided in Lemma~\ref{uct}.
\begin{lemma}\label{lemma:DF}
There exists a $1 < J(m, \theta) < \infty$ depending only on $m$ and
$\theta$ such that for every centered Blaschke $B \in {\mathbf{B}}_{\theta}^{m}$, any $n
\ge L_{0}$, and  any $z \in {\Bbb T}$, the following two inequalities
hold,
\begin{equation}\label{cp-1}
1/ J(m, \theta) \le \frac{\big{|}\langle {B}^{-q_{n}}(z), z
\rangle\big{|}}{\big{|}\langle z, {B}^{q_{n}}(z)
\rangle\big{|}} \le  J(m, \theta)
\end{equation}
and
\begin{equation}\label{cp-2}
1/ J(m, \theta) \le \frac{\big{|}\langle {B}^{q_{n+1}}(z), z
\rangle\big{|}}{\big{|}\langle z, {B}^{q_{n}}(z)
\rangle\big{|}} \le  J(m, \theta).
\end{equation}
\end{lemma}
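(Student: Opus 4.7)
The plan is to reduce both estimates (\ref{cp-1}) and (\ref{cp-2}) to a single absolute upper bound on a cross-ratio, and then extract that bound from Lemma~\ref{Swiatek-D} together with the closest-return combinatorics of Lemma~\ref{bl}, using Lemma~\ref{space} as the bridge. For (\ref{cp-1}), fix $n \ge L_{0}$ and $z \in {\Bbb T}$ and set
\begin{equation*}
J^{0}(z) = \langle z, B^{q_{n}}(z) \rangle, \qquad I^{0}(z) = \langle B^{-q_{n}}(z), B^{2q_{n}}(z) \rangle.
\end{equation*}
By Corollary~\ref{nus}, $J^{0}(z) \Subset I^{0}(z)$ with left and right components $L(z) = \langle B^{-q_{n}}(z), z\rangle$ and $R(z) = \langle B^{q_{n}}(z), B^{2q_{n}}(z)\rangle$. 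A uniform bound $C(I^{0}(z), J^{0}(z)) \le J(m,\theta)$, valid for every $z \in {\Bbb T}$, yields via Lemma~\ref{space} the estimates $|L(z)|, |R(z)| \ge |J^{0}(z)|/J(m,\theta)$. Applying these at $w = B^{-q_{n}}(z)$, where $J^{0}(w) = L(z)$ and $R(w) = J^{0}(z)$, turns the lower bound on $|R(w)|$ into the companion upper bound on $|L(z)|$. Together this gives (\ref{cp-1}); the proof of (\ref{cp-2}) is parallel, with $q_{n+1}$ playing the role of $q_{n}$ in the definition of $J^{0}(z)$.

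Since $B|{\Bbb T}$ is a homeomorphism of rotation number $\theta$, hence topologically conjugate to $R_{\theta}$, the combinatorics of Lemma~\ref{bl} transfer to $B$. Part~1 gives that the family $\{B^{-k}(I^{0}(z)) : 0 \le k \le q_{n-2}-1\}$ is pairwise disjoint, and Lemma~\ref{Swiatek-D} therefore supplies the distortion estimate
\begin{equation*}
C(I^{N}, J^{N}) \le \beta\bigl(1 + C(I^{0}, J^{0})^{\alpha}\bigr), \qquad 0 \le N \le q_{n-2}-1,
\end{equation*}
where $I^{N}, J^{N}$ are the $N$th pullbacks of $I^{0}, J^{0}$ under $B$. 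Part~2 gives a covering ${\Bbb T} = \bigcup_{k=0}^{q_{n}+q_{n+1}-1} B^{-k}(J^{0}(z))$ whose multiplicity is bounded by a constant $M(\theta)$ depending only on $\theta$ (since $\{B^{-k}(I^{0}(z))\}$ is a disjoint threefold enlargement of $\{B^{-k}(J^{0}(z))\}$). The bounded-type estimate $q_{n}+q_{n+1} \le C(\theta)\, q_{n-2}$ provides uniform control on how the length of $J$ redistributes along the orbit.

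The core of the argument is a bootstrap converting the relative distortion estimate into an absolute bound on $C(I^{0}(z), J^{0}(z))$. Using the multiplicity-bounded cover together with the disjointness of the larger arcs, a pigeonhole argument over a suitable sub-range of $\{0, \dots, q_{n-2}-1\}$ produces an iterate $N^{*}$ at which $|J^{N^{*}}|$, $|L^{N^{*}}|$, and $|R^{N^{*}}|$ are all of the same order of magnitude; setting $z_{*} = B^{N^{*}}(z)$, this yields $C(I^{0}(z_{*}), J^{0}(z_{*})) \le K_{0}(\theta)$ for a universal constant $K_{0}$. To transfer this bound back to $z$, apply Lemma~\ref{Swiatek-D} a second time with base point $z_{*}$: Lemma~\ref{bl} part~1 applied at $z_{*}$ again provides disjointness of the first $q_{n-2}$ pullbacks, and after $N^{*}$ such pullbacks one recovers exactly $(I^{0}(z), J^{0}(z))$, giving $C(I^{0}(z), J^{0}(z)) \le \beta(1 + K_{0}^{\alpha}) =: J(m,\theta)$. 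The main obstacle is precisely this bootstrap: Lemma~\ref{Swiatek-D} only propagates bounds forward along the pullback orbit, so the absolute starting bound must come from combinatorial input, and the bounded-type hypothesis is essential at this step to keep both the cover multiplicity and the ratio $q_{n+1}/q_{n-2}$ uniformly bounded.
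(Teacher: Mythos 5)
Your overall strategy is in the right spirit---reduce both inequalities to an absolute bound on a cross-ratio, and feed the closest-return combinatorics of Lemma~\ref{bl} into Lemma~\ref{Swiatek-D}---and the reduction of (\ref{cp-1}) to a bound on $C(I^{0}(z),J^{0}(z))$ via Lemma~\ref{space}, then transferring to $B^{-q_{n}}(z)$ for the companion inequality, is exactly the right move. However, there are two genuine gaps.

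First, the heart of the argument---producing the absolute starting bound on the cross-ratio---is left as an unworked ``pigeonhole'' gesture. You claim that a pigeonhole over $\{0,\dots,q_{n-2}-1\}$ produces an iterate $N^{*}$ at which $|J^{N^{*}}|$, $|L^{N^{*}}|$, and $|R^{N^{*}}|$ are all of the same order, but you do not explain how the three families are controlled simultaneously: the disjointness and covering facts give you averages for each family separately, and by themselves they do not rule out that at every $N$ one of $L^{N}$ or $R^{N}$ is tiny compared to $J^{N}$. The paper circumvents this entirely with a cleaner device: it chooses a special base point $z_{0}$ that \emph{minimizes} $|\langle z,B^{q_{n}}(z)\rangle|$ over $z\in\Bbb T$. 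Then $|\langle B^{-q_{n}}(z_{0}),z_{0}\rangle|$ and $|\langle B^{q_{n}}(z_{0}),B^{2q_{n}}(z_{0})\rangle|$ are automatically $\ge|\langle z_{0},B^{q_{n}}(z_{0})\rangle|$ (each is another value of the same minimized function), so $C(I^{0}(z_{0}),J^{0}(z_{0}))<3$ for free. One then uses Lemma~\ref{Swiatek-D} (via the disjointness of Lemma~\ref{bl}, part~1, and bounded type $q_{n}\le\tau(\theta)q_{n-2}$) to propagate this to the six adjacent arcs near $z_{0}$, and finally uses the covering property (Lemma~\ref{bl}, part~2) to land an arbitrary orbit point near $z_{0}$ and propagate backward. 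The case analysis near $z_{0}$ (whether an adjacent pair is already short, or all are comparable to $l_{0}$) replaces the pigeonhole you allude to. Without the minimizer trick or some equally explicit replacement, the bootstrap you describe does not close.

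Second, your claim that ``the proof of (\ref{cp-2}) is parallel, with $q_{n+1}$ playing the role of $q_{n}$'' is not correct. Running your argument with $q_{n+1}$ in place of $q_{n}$ gives (\ref{cp-1}) at level $n+1$, i.e.\ a comparison of $|\langle B^{-q_{n+1}}(z),z\rangle|$ with $|\langle z,B^{q_{n+1}}(z)\rangle|$. But (\ref{cp-2}) compares arcs at \emph{consecutive} levels $n$ and $n+1$; it is a statement about the scaling ratio of closest-return gaps, not about the left--right balance at a fixed level. The paper derives (\ref{cp-2}) from (\ref{cp-1}) by nesting: $\langle z,B^{-q_{n+1}}(z)\rangle\subset\langle z,B^{q_{n}}(z)\rangle$ gives one direction, and covering $\langle z,B^{q_{n}}(z)\rangle$ by at most $a_{n+1}+1\le b(\theta)+1$ copies of the $q_{n+1}$-return arc, each controlled by repeated use of (\ref{cp-1}), gives the other. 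You need this extra step; the substitution does not suffice.
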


\begin{proof}

Let  $n \ge L_{0}$. Take $z_{0} \in {\Bbb T}$ such that
$$
\big{|}\langle z_{0}, {B}^{q_{n}}(z_{0}) \rangle \big{|} =
\min_{z \in \Bbb T} \big{|}\langle z, {B}^{q_{n}}(z) \rangle
\big{|}.
$$
It follows that
\begin{equation}\label{cr-1}
C(\langle {B}^{-q_{n}}(z_{0}), {B}^{2q_{n}}(z_{0})
\rangle, \langle z_{0}, {B}^{q_{n}}(z_{0}) \rangle) < 3.
\end{equation}
 Since $\theta$ is of
bounded type, there is an  integer $0< \tau(\theta) < \infty$
depending only on $\theta$ such that
\begin{equation}\label{qqb-1}
q_{n} < \tau(\theta) q_{n-2}
\end{equation}
 for all $n \ge 2$. By
the first assertion of Lemma~\ref{bl}, it follows that for any integer $0< N \le
5q_{n}$, the family $$\{\langle {B}^{-q_{n}-k}(z_{0}), {B}^{2q_{n}-k}(z_{0})
\rangle\:\big{|} \: 0 \le k \le
N\}$$ can be divided into at most $5\tau(\theta)$ disjoint
sub-families. By (\ref{cr-1}) and by applying Lemma~\ref{Swiatek-D} successively at most  $5\tau(\theta)$ times,  we get a constant $0< P_{1}(m, \theta) < \infty$ depending only on $m$  and $\theta$
such that the following inequality
\begin{equation}\label{cr-2o}
C(\langle {B}^{-(l+1)q_{n}}(z_{0}),
{B}^{(2-l)q_{n}}(z_{0}) \rangle, \langle
{B}^{-lq_{n}}(z_{0}), {B}^{(1-l)q_{n}}(z_{0}) \rangle) <
P_{1}(m, \theta)
\end{equation}
holds for $0 \le l \le 5$.

We claim that there exists a $0<
P_{2}(m, \theta) < \infty$ depending only on $m$  and $\theta$ such
that any two of the following six arc segments
\begin{equation}\label{sac}
\langle {B}^{-lq_{n}}(z_{0}),{B}^{(1-l)q_{n}}(z_{0})
\rangle, \: 0 \le l \le 5,
\end{equation}
are $P_{2}(m, \theta)$-comparable. Let us prove the claim. It suffices  to prove that among these six arc segments, any two adjacent ones are $P_{1}(m,\theta)$-comparable. Let us  prove this only for the pair of adjacent arc segments
$$
\langle z_{0},{B}^{q_{n}}(z_{0})
\rangle \hbox{  and } \langle {B}^{-q_{n}}(z_{0}),z_{0} \rangle.
$$  The same way can be used for the other four pairs of adjacent arc segments.   By taking $l = 0$ in  (\ref{cr-2o}) we get
$$
C(\langle {B}^{-q_{n}}(z_{0}),
{B}^{2q_{n}}(z_{0}) \rangle, \langle
z_{0}, {B}^{q_{n}}(z_{0}) \rangle) <
P_{1}(m, \theta).
$$
This implies that
\begin{equation}\label{pp-cr}
\frac{|\langle z_{0},{B}^{q_{n}}(z_{0})
\rangle|}   {  |\langle {B}^{-q_{n}}(z_{0}),z_{0} \rangle|   } < P_{1}(m, \theta).
\end{equation}
By taking $l = 1$ in  (\ref{cr-2o}) we get
$$
C(\langle {B}^{-2q_{n}}(z_{0}),
{B}^{q_{n}}(z_{0}) \rangle, \langle
{B}^{-q_{n}}(z_{0}), z_{0}  \rangle) <
P_{1}(m, \theta).
$$
This implies that
\begin{equation}\label{bb-cr}
\frac{|\langle {B}^{-q_{n}}(z_{0}),z_{0} \rangle| }   {  |\langle z_{0},{B}^{q_{n}}(z_{0})
\rangle|  } < P_{1}(m, \theta).
\end{equation}
From (\ref{pp-cr}) and (\ref{bb-cr}) it follows that the two adjacent arc segments $\langle z_{0},{B}^{q_{n}}(z_{0})
\rangle$  and $\langle {B}^{-q_{n}}(z_{0}),z_{0} \rangle$ are $P_{1}(m, \theta)$-comparable. The same way can be used to prove the other four adjacent arc segments are also $P_{1}(m, \theta)$-comparable. The claim then follows by taking $P_{2}(m, \theta) = P_{1}^{5}(m, \theta)$.

Let $$l_{0} = \big{|}\langle z_{0}, {B}^{q_{n}}(z_{0}) \rangle \big{|}.$$  By the choice of $z_{0}$, it follows that $l_{0}$ is  the minimum of
the length of the  six intervals in (\ref{sac}). By the Claim we proved above, we have
\begin{equation}\label{ah}
P_{2}(m, \theta)^{-1} \cdot l_{0} \le |\langle {B}^{-lq_{n}}(z_{0}),{B}^{(1-l)q_{n}}(z_{0})
\rangle| \le P_{2}(m, \theta) \cdot l_{0}, \: 0 \le l \le 5.
\end{equation}

For any $z\in \Bbb T$, it follows from the second assertion of Lemma~\ref{bl}
that there is an $0 \le i < q_{n}+ q_{n+1}$ such that
${B}^{i}(z) \in  \langle {B}^{-5q_{n}}(z_{0}),
{B}^{-4q_{n}}(z_{0}) \rangle$. We then have the following two
cases.

In the first case, there is some $1 \le j \le 3$ such that
$$
\big{|}\langle {B}^{i+jq_{n}}(z), {B}^{i+(j+1)q_{n}}(z)
\rangle \big{|} < l_{0}/2.
$$
This implies
\begin{equation}\label{cr-2}
C(\langle {B}^{i+(j-1)q_{n}}(z),
{B}^{i+(j+2)q_{n}}(z)\rangle, \langle {B}^{i+jq_{n}}(z),
{B}^{i+(j+1)q_{n}}(z) \rangle) < 3.
\end{equation}
Since $0\le i < q_{n} + q_{n+1}$ and $1 \le j \le 3$, by (\ref{qqb-1}) we have $$0< i + jq_{n} < 4q_{n} + q_{n+1} < (4\tau(\theta) + \tau(\theta)^{2}) q_{n-2}.$$
By (\ref{cr-2}) and the first assertion
of Lemma~\ref{bl}, and by applying   Lemma~\ref{Swiatek-D} successively at most $(4\tau(\theta) + \tau(\theta)^{2})$ times,  we get a constant $P_{3}(m, \theta)
> 0$ depending only on $m$ and $\theta$ such that
\begin{equation}\label{cr-3}
C(\langle {B}^{-q_{n}}(z),{B}^{2q_{n}}(z)\rangle,
\langle z, {B}^{q_{n}}(z) \rangle) < P_{3}(m, \theta).
\end{equation}

In the second case, we have
$$
\big{|}\langle {B}^{i+jq_{n}}(z), {B}^{i+(j+1)q_{n}}(z)
\rangle \big{|} \ge l_{0}/2
$$
for each $j =1, 2, 3$.  This, together with (\ref{ah}),  implies that  there exists a $0< P_{4}(m,
\theta) < \infty$ depending only on $m$  and $\theta$ such that
\begin{equation}\label{cr-4}
C(\langle {B}^{i+q_{n}}(z), {B}^{i + 4q_{n}}(z)\rangle,
\langle {B}^{i + 2q_{n}}(z), {B}^{i +3q_{n}}(z) \rangle)
< P_{4}(m, \theta).
\end{equation} Since $0< i + 2q_{n} < 3 q_{n} + q_{n+1} < (3 \tau(\theta) + \tau(\theta)^{2}) q_{n-2}$,
By (\ref{cr-4}) and the
first assertion of Lemma~\ref{bl}, and by applying  Lemma~\ref{Swiatek-D} successively at most $(3 \tau(\theta) + \tau(\theta)^{2})$ times,   we get a constant
$0< P_{5}(m, \theta) < \infty$ depending only on $m$ and $\theta$
such that
\begin{equation}\label{cr-5}
C(\langle {B}^{-q_{n}}(z), {B}^{2q_{n}}(z)\rangle,
\langle z, {B}^{q_{n}}(z) \rangle) < P_{5}(m, \theta).
\end{equation}
Let $P_{6}(m, \theta) = \max\{P_{3}(m, \theta), P_{5}(m, \theta)\}$. From (\ref{cr-3}) and (\ref{cr-5}) it follows that in both the cases, the following inequality holds,
\begin{equation}\label{dtcr}
C(\langle {B}^{-q_{n}}(z), {B}^{2q_{n}}(z)\rangle,
\langle z, {B}^{q_{n}}(z) \rangle) < P_{6}(m, \theta).
\end{equation} Since (\ref{dtcr}) holds for  an arbitrary $z \in \Bbb T$, by considering the point $B^{-q_{n}}(z)$, we get
\begin{equation}\label{dtcr'}
C(\langle {B}^{-2q_{n}}(z), {B}^{q_{n}}(z)\rangle,
\langle {B}^{-q_{n}}(z), z \rangle) < P_{6}(m, \theta).
\end{equation}
From (\ref{dtcr}) we have $|\langle z, {B}^{q_{n}}(z) \rangle| < P_{6}(m, \theta)|\langle {B}^{-q_{n}}(z), z\rangle|$. From (\ref{dtcr'}) we have $|\langle {B}^{-q_{n}}(z), z\rangle| < P_{6}(m, \theta)|\langle z, {B}^{q_{n}}(z) \rangle|$.
This implies that  for any $z \in \Bbb T$, the inequality
\begin{equation}\label{ff}
1/ P_{6}(m, \theta) \le \frac{\big{|}\langle {B}^{-q_{n}}(z),
z \rangle\big{|}}{\big{|}\langle z, {B}^{q_{n}}(z)
\rangle\big{|}} \le  P_{6}(m, \theta)
\end{equation}
holds  for all $n \ge L_{0}$.  This proves the first assertion of Lemma~\ref{lemma:DF} by taking $J(m, \theta) = P_{6}(m, \theta)$.

Now let us prove the second assertion of Lemma~\ref{lemma:DF}.  Note that
$$
\langle z, {B}^{-q_{n+1}}(z) \rangle  \subset \langle z,
{B}^{q_{n}}(z) \rangle,
$$
so from (\ref{cp-1}), we have
$$
\big{|} \langle {B}^{ q_{n+1}}(z), z \rangle \big{|} \le J(m,
\theta)\big{|}\langle z,  {B}^{- q_{n+1}}(z) \rangle \big{|} <
J(m, \theta)\big{|}\langle z,  {B}^{q_{n}}(z) \rangle \big{|},
$$
and this implies the right hand of (\ref{cp-2}). To prove the left
hand, Note that
$$
\langle z, {B}^{q_{n}}(z) \rangle \subset \bigcup_{0 \le i \le
b(\theta)} \langle {B}^{-iq_{n+1}}(z),
{B}^{-(i+1)q_{n+1}}(z) \rangle,
$$
where $b(\theta) = \sup\{a_{n}\}$. This implies that
$$
\big{|} \langle z, {B}^{q_{n}}(z) \rangle  \big{|}  \le
\sum_{0 \le i\le b(\theta)} \big{|}\langle {B}^{-iq_{n+1}}(z),
{B}^{-(i+1)q_{n+1}}(z) \rangle \big{|}.
$$
For $0 \le i \le b(\theta)$, by applying (\ref{cp-1}), we have
$$
\big{|}\langle {B}^{-iq_{n+1}}(z),
{B}^{-(i+1)q_{n+1}}(z) \rangle \big{|} \le J(m,
\theta)^{i+1}\big{|}\langle {B}^{q_{n+1}}(z), z \rangle
\big{|}.
$$
 Therefore, we get
$$
\frac{\big{|}\langle z, {B}^{q_{n}}(z)
\rangle\big{|}}{\big{|}\langle {B}^{q_{n+1}}(z), z
\rangle\big{|}} \le \sum_{0 \le i\le b(\theta)}J(m, \theta)^{i+1}.
$$
This proves the second assertion of the Lemma by modifying $J(m,
\theta)$. This completes the proof of Lemma~\ref{lemma:DF}.
\end{proof}


Now let us prove Theorem B.
Let $L_{0}> 0$ be the integer in Lemma~\ref{uct}. Take an
arbitrary $z \in \Bbb T$ and an arbitrary $0< \delta< 2 \pi$.

First let us assume that one of $\langle z,
{B}^{q_{_{L_{0}}}}(z) \rangle$ and  $\langle z,
{B}^{q_{_{L_{0}+1}}}(z) \rangle$ is contained either in
$[e^{-i \delta} z, z]$ or in $[z, e^{i\delta}z]$. With this assumption  let us show that
there exists an $1 < M_{1}(m, \theta)$ depending on only on $m$ and
$\theta$ such that
\begin{equation}\label{MM-1}
M_{1}(m, \theta)^{-1} < \frac{\big{|}h_{{B}}([z, e^{i \delta}
z])\big{|}}{ \big{|}h_{{B}}([ e^{-i \delta} z, z])\big{|}}<
M_{1}(m, \theta).
\end{equation}
 Without loss of generality, let us assume that
\begin{equation}\label{assu-10}
\langle z, {B}^{q_{_{L_{0}}}}(z) \rangle = [z,
{B}^{q_{_{L_{0}}}}(z)] \subset [z, e^{i\delta}z].
\end{equation}
 Since $\theta$ is of bounded type, by
Lemma~\ref{lemma:DF},  there is an integer $N_{1}(m, \theta)$
depending only on $m$ and $\theta$ such that
$$
\big{|} \langle {B}^{q_{L_{0}+1+2N_{1}(m, \theta)}}(z), z
\rangle  \big{|} \le \big{|} \langle z, {B}^{q_{L_{0}}}(z)
\rangle \big{|}.
$$
We thus have
\begin{equation}\label{assu-20}
\langle {B}^{q_{L_{0}+1+2N_{1}(m, \theta)}}(z), z \rangle
\subset [e^{-i \delta} z, z].
\end{equation}
From (\ref{assu-10}) we have
\begin{equation}\label{inc-7}
\langle q_{L_{0}} \theta \rangle  \cdot  2\pi \le  h_{{B}}([z, e^{i
\delta} z]) <  2 \pi.
\end{equation}
From (\ref{assu-20}),  we have
\begin{equation}\label{inc-8}
\langle q_{L_{0}+1 + 2N_{1}(m, \theta)} \theta \rangle  \cdot 2 \pi <
h_{{B}}([ e^{-i \delta} z, z]) < 2 \pi.
\end{equation}
We thus have (\ref{MM-1}) in this case  by taking
$$
M_{1}(m, \theta) = \min\{\frac{1}{\langle q_{L_{0}} \theta \rangle}, \frac{1}{
\langle q_{L_{0}+1 + 2N_{1}(m, \theta)} \theta \rangle}\}.
$$

Now assume that neither  of $\langle z, {B}^{q_{_{L_{0}}}}(z)
\rangle$ and  $\langle z, {B}^{q_{_{L_{0}+1}}}(z) \rangle$ is
contained  in $[e^{-i \delta} z, z]$ or  $[z, e^{i\delta}z]$. Let $k
\ge L_{0}+2$ be the least integer such that either $[e^{-i \delta}
z, z]$ or $[z, e^{i\delta}z]$ contains
 $\langle  z, {B}^{q_{k}}(z) \rangle$. Suppose that
\begin{equation}\label{inc-p}
\langle z,  {B}^{q_{k}}(z) \rangle = [z, {B}^{q_{k}}(z)]
\subset [z, e^{i \delta} z].
\end{equation}
The other cases can be treated in the same way. Then by the
assumption and the definition of $k$, we have
\begin{equation}\label{inc-1}
[z,  {B}^{q_{k}}(z)] \subset [z, e^{i \delta} z] \subset [z,
{B}^{q_{k-2}}(z)]
\end{equation}
and
\begin{equation}\label{inc-2}
 [e^{-i \delta} z, z] \subset
[{B}^{q_{k-1}}(z), z].
\end{equation}
Let $J(m, \theta)$ be the constant in Lemma~\ref{lemma:DF}. By
(\ref{inc-1}) and Lemma~\ref{lemma:DF}, it follows that
\begin{equation}\label{ii-1}
\big{|}[{B}^{q_{k-1}}(z), z]\big{|} \le J(m, \theta)
\big{|}[z, {B}^{q_{k}}(z)] \big{|} \le  J(m, \theta) \delta.
\end{equation}

Note that for $n \ge L_{0}$,
\begin{equation}\label{X-s}
\langle {B}^{q_{n+2}-q_{n+1}}(z), {B}^{q_{n+2}}(z) \rangle \cup \langle {B}^{q_{n+2}}(z), z \rangle \subset \langle {B}^{q_{n}}(z), z \rangle.
\end{equation}
  By the first assertion of Lemma~\ref{lemma:DF} we have
\begin{equation}\label{x-1}
|\langle {B}^{q_{n+2}-q_{n+1}}(z), {B}^{q_{n+2}}(z) \rangle| \ge J(m, \theta)^{-1} |\langle {B}^{q_{n+2}}(z), {B}^{q_{n+2}+q_{n+1}}(z) \rangle|.
\end{equation}
and
\begin{equation}\label{x-2}
 |\langle {B}^{2q_{n+2}}(z), {B}^{q_{n+2}}(z) \rangle| \ge   J(m, \theta)^{-1} |\langle {B}^{q_{n+2}}(z), z \rangle| .
\end{equation}

 By the second assertion of Lemma~\ref{lemma:DF}, we have
\begin{equation}\label{x-3}
 |\langle {B}^{q_{n+2}}(z), {B}^{q_{n+2}+q_{n+1}}(z) \rangle| \ge J(m, \theta)^{-1} |\langle {B}^{2q_{n+2}}(z), {B}^{q_{n+2}}(z) \rangle|.
\end{equation}

From (\ref{x-1})-(\ref{x-3}), we have
\begin{equation}\label{Y-s}
|\langle {B}^{q_{n+2}-q_{n+1}}(z), {B}^{q_{n+2}}(z) \rangle| > J(m, \theta)^{-3} |\langle {B}^{q_{n+2}}(z), z \rangle|.
\end{equation}
From (\ref{X-s}) and (\ref{Y-s}) we have
\begin{equation}\label{ii-2}
\big{|} \langle {B}^{q_{n}}(z), z \rangle \big{|} \ge (1 + J(m, \theta)^{-3}) \big{|} \langle {B}^{q_{n+2}}(z), z \rangle \big{|}
\end{equation}
holds for all $n \ge L_{0}$.  Let $N_{2}(m, \theta) > 0$ be the
least positive  integer such that
$$
\big{(}1 + J(m, \theta)^{-3}\big{)} ^{N_{2}(m, \theta)} > J(m, \theta).
$$
From (\ref{ii-1}) and (\ref{ii-2}), it follows that
\begin{equation}\label{inc-3}
[{B}^{q_{k-1 + 2N_{2}(m, \theta)}}(z), z] \subset [e^{-i
\delta}, z].
\end{equation}
From (\ref{inc-1}) we have
\begin{equation}\label{inc-7}
 \langle q_{k} \theta \rangle  \cdot 2\pi \le  h_{{B}}([z, e^{i
\delta} z]) \le \langle q_{k-2} \theta \rangle \cdot 2 \pi.
\end{equation}
From (\ref{inc-2}) and (\ref{inc-3}),  we have
\begin{equation}\label{inc-8}
\langle q_{k-1 + 2N_{2}(m, \theta)} \theta \rangle \cdot 2 \pi <
h_{{B}}([ e^{-i \delta} z, z]) <\langle q_{k-1}
\theta\rangle \cdot 2 \pi.
\end{equation}
 Since
$\theta$ is of bounded type, from (\ref{inc-7}) and (\ref{inc-8}),
it follows that there exists an $1 < M_{2}(m, \theta) < \infty$
depending only on $m$ and $\theta$ such that in this case
$$
M_{2}(m, \theta)^{-1} < \frac{\big{|}h_{{B}}([z, e^{i \delta}
z])\big{|}}{ \big{|}h_{{B}}([ e^{-i \delta} z, z])\big{|}}<
M_{2}(m, \theta).
$$

Theorem B then follows by taking $M(m, \theta) = \max\{M_{1}(m,
\theta), M_{2}(m, \theta)\}$.



\bibliographystyle{amsalpha}

\end{document}